\theoremstyle{plain}
\newtheorem{theoremEng}{Theorem}
\newtheorem{theoremMainEng}{Theorem}
\newtheorem{lemmaEng}[theoremEng]{Lemma}
\newtheorem{propositionEng}[theoremEng]{Proposition}
\newtheorem{corollaryLmEng}{Corollarry}[theoremEng]
\theoremstyle{definition}
\numberwithin{theoremEng}{section} \numberwithin{equation}{section}
\theoremstyle{plain}
\newtheorem*{theoremEng*}{Theorem}
\newtheorem*{lemmaEng*}{Lemma}
\newtheorem*{propositionEng*}{Proposition}
\newtheorem*{statementEng*}{Statement}
\newtheorem*{corollaryEng*}{Corollarry}
\theoremstyle{definition}
\newtheorem*{definitionEng*}{Definition}
\theoremstyle{remark}
\newtheorem*{notationEng*}{Notation}
\newtheorem*{remarkEng*}{Remark}
\newtheorem*{exampleEng*}{Example}
\renewcommand{\thetheoremMainEng}%
{\Alph{theoremMainEng}}
\title[Schr\"{o}dinger operators with measure-valued potentials]
      {Schr\"{o}dinger operators with measure-valued potentials: semiboundedness and spectrum}
\address{Institute of Mathematics of NAS of Ukraine \\
         3, Tereshchenkivska str.  \\
         Kyiv-4 \\
         01004 \\
         Ukraine}
\author[V. Mikhailets, V. Molyboga]
       {Vladimir Mikhailets, Volodymyr Molyboga}
\email[Vladimir Mikhailets]{mikhailets@imath.kiev.ua} \email[ Volodymyr Molyboga]{molyboga@imath.kiev.ua}
\dedicatory{To V. D. Koshmanenko on the occasion of his 75th birthday}
\keywords{Schr\"{o}dinger operators, strongly singular potentials, discrete spectrum, Molchanov's criterion}
\subjclass[2010]{Primary 34B20, 34L40; Secondary 34B24, 47A10}
\begin{document}
\begin{abstract}
We study the 1-D Schr\"{o}dinger operators in Hilbert space $L^{2}(\mathbb{R})$ with real-valued Radon measure  $q'(x)$, $q\in \mathrm{BV}_{loc}(\mathbb{R})$ as potentials. 
New sufficient conditions for minimal operators to be bounded below and selfadjoint are found. 
For such operators a criterion for the discreteness of the spectrum is proved, which generalizes Molchanov's, Brinck's, and the Albeverio--Kostenko--Malamud criteria. The quadratic forms corresponding to the investigated operators are described.
\end{abstract}

\maketitle

\section{Introduction and main results}\label{sec:VtaOR}
We consider the 1-D Schr\"{o}dinger operator 
\begin{equation}\label{eq_VtaOR10}
	 \mathrm{S}(q)u\equiv \mathrm{S}u:=-u''+q'(x)u,
\end{equation}
in the complex Hilbert space $L^{2}(\mathbb{R})$. 
The potential of \eqref{eq_VtaOR10} is the generalized derivative $q'(x)$ 
of a certain real-valued function $q\in L_{loc}^{2}(\mathbb{R})$. 
Following \cite{SvSh2003}, we define $\mathrm{S}(q)$ as a quasi-differential operator 
\begin{align*}\label{eq_VtaOR12}
l_{q}[u] & :=-(u'-qu)'-q(u'-qu)-q^{2}u, \\
\mathrm{Dom}(l_{q}) & :=\left\{u:\mathbb{R}\rightarrow \mathbb{C}\left|\,u,u'-qu\in \mathrm{AC}_{loc}(\mathbb{R})\right.\right\}.
\end{align*}
The quasi-differential expression $l_{q}[u]$ is equal to $-u''+q'(x)u$ in the sense of distributions 
\begin{equation*}\label{eq_VtaOR14}
\langle l_{q}[u],\varphi \rangle = \langle -u''+q'(x)u,\varphi \rangle\qquad \text{for every } \varphi\in C_{comp}^{\infty}(\mathbb{R}).
\end{equation*}
Hereafter $u^{[1]}:=u'-qu$ denotes the quasi-derivative. Then the operators \eqref{eq_VtaOR10} are defined as 
\begin{align*}
\mathrm{S}(q)u & :=l_{q}[u],   \\
\mathrm{Dom}(\mathrm{S}(q)) & :=\left\{u\in L^{2}(\mathbb{R})\left|\,u,u'-qu\in \mathrm{AC}_{loc}(\mathbb{R}),\,l_{q}[u]\in L^{2}(\mathbb{R})\right.\right\},\notag\hspace{75pt}
\end{align*}
and
\begin{equation*}
\dot{\mathrm{S}}_{0}(q)u :=l_{q}[u], \qquad
\mathrm{Dom}(\dot{\mathrm{S}}_{0}(q)) :=\left\{u\in \mathrm{Dom}(\mathrm{S}(q))\left|\,\mathrm{supp}\,u\Subset\mathbb{R}\right.\right\}. 
\end{equation*}
As usual the operators $\mathrm{S}(q)$ and $\dot{\mathrm{S}}_{0}(q)$ are called maximal and preminimal respectively. 
Under these assumptions the operator $\dot{\mathrm{S}}_{0}(q)$ is symmetric and closable, 
its closure being denoted by $\mathrm{S}_{0}(q)$ (see Proposition in Appendix).

Necessary and sufficient conditions for the operators $\mathrm{S}_{0}(q)$ to be bounded below and to have discrete spectrum are found in \cite{MkMrNvMFAT2017}. 
However, they are not constructive. 
Nonetheless, in physical applications the most interesting situation is where the potentials  $q'(x)$ in \eqref{eq_VtaOR10} are real-valued Radon measures on a locally compact space $\mathbb{R}$, i.~e. $q\in \mathrm{BV}_{loc}(\mathbb{R})$ (see, for instance, references in \cite{Bra1985,AlKsMl2010,KshDdk2016}). 
This situation is investigated in this paper. 
The case where Radon measure is absolutely continuous, i.~e. $q'\in L_{loc}^{1}(\mathbb{R})$, was studied in \cite{Bri1959,Mol1953}. 
The approach applied in \cite{Bri1959} may be generalized onto arbitrary Radon measures on $\mathbb{R}$. 

Let us suppose that there exists a finite number $C>0$ such that for all intervals $J$ 
of the real axis $\mathbb{R}$ with length $\leq 1$ we have 
\begin{equation*}\label{eq_VtaOR34.1}
\int_{J}d\,q(x)\geq -C. \tag{$\mathrm{Br}$}
\end{equation*}
Without loss of generality we may assume that in the Brinck condition \eqref{eq_VtaOR34.1} $C\geq 2$ and we assume this in what follows.

\begin{theoremMainEng}\label{thM_VtaOR_B}
Under the condition $(\mathrm{Br})$ the operator $\mathrm{S}_{0}(q)$ is bounded below, selfadjoint and $\mathrm{S}_{0}(q)=\mathrm{S}(q)$.
\end{theoremMainEng}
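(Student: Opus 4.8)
The plan is to establish semiboundedness of the preminimal operator $\dot{\mathrm{S}}_{0}(q)$ via a quadratic-form estimate, and then deduce selfadjointness and the coincidence $\mathrm{S}_{0}(q)=\mathrm{S}(q)$ from general principles. The key observation is that for $u\in \mathrm{Dom}(\dot{\mathrm{S}}_{0}(q))$ with compact support, integration by parts using the quasi-derivative $u^{[1]}=u'-qu$ yields
\begin{equation*}
(\dot{\mathrm{S}}_{0}(q)u,u)_{L^{2}} = \int_{\mathbb{R}}\left(|u'|^{2} + d\,q(x)\,|u|^{2}\right),
\end{equation*}
the right-hand side being understood as $\int |u'|^{2}\,dx + \int |u|^{2}\,d\mu$ with $\mu=q'$. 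So the whole matter reduces to a lower bound for this form under the Brinck condition $(\mathrm{Br})$.

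First I would reduce the global estimate to a local one by a partition-of-unity / covering argument: cover $\mathbb{R}$ by the unit intervals $J_{n}=[n,n+1]$, $n\in\mathbb{Z}$, and it suffices to prove on each $J$ of length $\le 1$ an inequality of the form
\begin{equation*}
\int_{J}|u|^{2}\,d\mu \;\ge\; -\varepsilon\int_{J}|u'|^{2}\,dx \;-\; K(\varepsilon)\int_{J}|u|^{2}\,dx,
\end{equation*}
for some $\varepsilon<1$ and a constant $K(\varepsilon)$ depending only on $C$; summing over $n$ (with the usual care about overlaps) then gives semiboundedness of the form, hence of $\dot{\mathrm{S}}_{0}(q)$. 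The core of the local estimate is the classical trick: write the negative part of $\mu$ on $J$ and control the mass $\mu^{-}(J)$ of each subinterval in terms of $C$ using $(\mathrm{Br})$ applied to that subinterval (the lower bound $\int_{J'}d\,q\ge -C$ for all subintervals $J'\subseteq J$ forces $\mu^{-}(J')\le \mu^{+}(J')+C$, and then a further covering argument bounds the positive part); one then estimates $\int_{J}|u|^{2}\,d\mu^{-}$ by $\sup_{J}|u|^{2}\cdot\mu^{-}(J)$ and uses the one-dimensional Sobolev embedding $\sup_{J}|u|^{2}\le \varepsilon\int_{J}|u'|^{2} + C(\varepsilon)\int_{J}|u|^{2}$ (valid on intervals of bounded length). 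This is exactly the mechanism by which Brinck's original $L^{1}_{loc}$ argument extends to arbitrary signed Radon measures, as announced in the text.

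Once $\dot{\mathrm{S}}_{0}(q)$ is bounded below, its closure $\mathrm{S}_{0}(q)$ is a bounded-below symmetric operator. To upgrade this to selfadjointness and to $\mathrm{S}_{0}(q)=\mathrm{S}(q)$, I would invoke the limit-point/limit-circle dichotomy for the quasi-differential expression $l_{q}$: it suffices to show that $l_{q}$ is in the limit-point case at both $+\infty$ and $-\infty$. The standard route is that a lower semibound for the form on compactly supported functions near an endpoint precludes the limit-circle case there — if both solutions of $l_{q}[u]=\lambda u$ were $L^{2}$ near $+\infty$, one could build compactly supported test functions violating the uniform lower bound just established — so $l_{q}$ is limit-point at each end. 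Limit-point at both ends is equivalent to $\mathrm{S}_{0}(q)=\mathrm{S}(q)$ being selfadjoint (this is where the Proposition in the Appendix, identifying $\mathrm{S}_{0}(q)$ and $\mathrm{S}(q)$ as adjoints of each other, is used).

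The main obstacle I anticipate is the local measure estimate: deriving a usable bound on the negative part $\mu^{-}(J)$ purely from the one-sided interval condition $(\mathrm{Br})$ requires a careful two-sided covering argument (the positive part of $\mu$ on $J$ is a priori unbounded, so one cannot simply say $\mu^{-}(J)\le C$; instead one must localize further, splitting $J$ at points where $q$ has controlled variation, and iterate). Getting the constant $\varepsilon$ strictly below $1$ uniformly in $n$ — which is what makes the summed form estimate closable and bounded below rather than merely lower-semibounded on a form core — is the delicate quantitative point, and is presumably where the normalization $C\ge 2$ in the hypothesis is exploited.
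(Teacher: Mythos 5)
Your overall architecture matches the paper's: a local lower bound for $\int_J |u|^2\,d\,q$ on intervals of length $\le 1$, summed over a partition of $\mathbb{R}$ to get $(\dot{\mathrm{S}}_{0}(q)u,u)\ge -2C^2\|u\|^2$, followed by the general principle that a bounded-below preminimal operator on the whole line has selfadjoint closure equal to the maximal operator (the paper simply cites this; your limit-point sketch is the right classical idea, though also not carried out). The gap is in the one step you yourself flag as the ``main obstacle'': the local estimate. You propose to decompose $\mu=q'$ into $\mu^{+}-\mu^{-}$, bound $\int_J|u|^2\,d\mu^{-}$ by $\sup_J|u|^2\cdot\mu^{-}(J)$, and control $\mu^{-}(J)$ from $(\mathrm{Br})$ via a covering argument that ``bounds the positive part.'' This route cannot be completed: $(\mathrm{Br})$ is one-sided and places no bound whatsoever on $\mu^{+}(J)$ (the measure $q'=N\delta_0$ satisfies $(\mathrm{Br})$ with $C=0$ for every $N$), and consequently none on $\mu^{-}(J)$ either. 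For instance $q'(x)=A\cos(kx)$ with $k\gg A$ satisfies $(\mathrm{Br})$ with $C=2A/k$ arbitrarily small, while $\mu^{-}(J)\sim A/\pi$ on unit intervals is arbitrarily large; the integral $\int_J|u|^2\,d\mu$ is nevertheless small there only because of cancellation between $\mu^{+}$ and $\mu^{-}$ against the slowly varying weight $|u|^2$. Any argument that separates the positive and negative parts discards exactly the cancellation that $(\mathrm{Br})$ encodes, so your local estimate, as sketched, fails.

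The missing idea is the Ganelius inequality (Lemma 2.1 of the paper), essentially a second-mean-value-theorem bound for Riemann--Stieltjes integrals: for $f\ge 0$ of bounded variation on a compact interval $J$,
\begin{equation*}
\int_{J} f\,d\,g \;\le\; \Bigl(\inf_{J} f+\mathrm{var}_{J}\, f\Bigr)\,\sup_{K\subseteq J}\int_{K} d\,g .
\end{equation*}
Applied with $f=|u|^2$ and $g=-q$, the one-sided hypothesis $\int_{K}d\,q\ge -C$ converts directly into
\begin{equation*}
-\int_{J}|u|^{2}\,d\,q \;\le\; C\Bigl(\inf_{J}|u|^{2}+\mathrm{var}_{J}\,|u|^{2}\Bigr)
\;\le\; C\bigl(h^{-1}\|u\|_{L^{2}(J)}^{2}+2\|u\|_{L^{2}(J)}\|u'\|_{L^{2}(J)}\bigr),
\end{equation*}
and Cauchy's inequality with the weight $h$ yields the local bound $-\int_{J}|u|^{2}d\,q\le C(2h^{-1}\|u\|^{2}+h\|u'\|^{2})$, which sums to the global estimate with the choice $h=C^{-1}$ (this is where $C\ge 2$ is used, so that $h\le 1$ and the $\|u'\|^2$ term is absorbed). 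Without this lemma, or an equivalent integration by parts in the Stieltjes integral that keeps the signed measure intact, the argument does not close.
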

The following theorem gives necessary and sufficient conditions for the spectra of the minimal operators to be discrete.
\begin{theoremMainEng}\label{thM_VtaOR_C}
	Let the potential $q'(x)$ satisfy the condition $(\mathrm{Br})$. Then spectrum of the operator $\mathrm{S}_{0}(q)$ is discrete if and only if the Molchanov condition is satisfied 
	\begin{equation*}\label{eq_VtaOR40}
	\lim_{|a|\rightarrow\infty}\int_{a}^{a+h}d\,q(x)=+\infty
	\end{equation*}
	for all $h>0$.
\end{theoremMainEng}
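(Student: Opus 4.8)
The plan is to argue variationally. By Theorem~\ref{thM_VtaOR_B} the operator $\mathrm{S}_{0}(q)=\mathrm{S}(q)$ is selfadjoint and bounded below; it is the operator associated with its quadratic form $\mathfrak{t}_{q}$, which on $u\in H^{1}(\mathbb{R})$ of compact support is $\mathfrak{t}_{q}[u]=\int_{\mathbb{R}}|u'|^{2}\,dx+\int_{\mathbb{R}}|u|^{2}\,dq(x)$, the second term being the integral of the bounded continuous function $|u|^{2}$ against the finite signed measure $dq$ on $\operatorname{supp}u$ (this is the form described in the preceding section). Let $\mathcal{Q}$ be the form domain, with graph norm $\|u\|_{\mathcal{Q}}^{2}:=\mathfrak{t}_{q}[u]+c_{0}\|u\|_{L^{2}(\mathbb{R})}^{2}$, where $c_{0}$ is chosen so that this is positive definite. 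Since $\mathrm{S}_{0}(q)$ is bounded below, $\sigma(\mathrm{S}_{0}(q))$ is discrete if and only if $\mathcal{Q}\hookrightarrow L^{2}(\mathbb{R})$ is compact; and by $(\mathrm{Br})$ the form $\mathfrak{t}_{q}[u]+c_{0}\|u\|^{2}$ dominates $\tfrac14\int_{I}|u'|^{2}\,dx$ on every bounded interval $I$, so $\mathcal{Q}\hookrightarrow L^{2}_{loc}(\mathbb{R})$ is compact by the Rellich theorem. Hence $\sigma(\mathrm{S}_{0}(q))$ is discrete if and only if the decay condition ``for every $\varepsilon>0$ there is $R>0$ with $\int_{|x|>R}|u|^{2}\,dx\le\varepsilon\|u\|_{\mathcal{Q}}^{2}$ for all $u\in\mathcal{Q}$'' holds, and it remains to match this with the Molchanov condition.

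\emph{Necessity.} Assume the Molchanov condition fails: there are $h_{0}>0$, a number $M$, and points $x_{k}$ with $|x_{k}|\to\infty$ and $\int_{x_{k}}^{x_{k}+h_{0}}dq\le M$. On $[x_{k},x_{k}+h_{0}]$ the antiderivative $Q_{k}(t):=\int_{x_{k}}^{t}dq$ satisfies $Q_{k}(t)\ge-C\lceil h_{0}\rceil$ by $(\mathrm{Br})$ and $Q_{k}(t)=\int_{x_{k}}^{x_{k}+h_{0}}dq-\int_{t}^{x_{k}+h_{0}}dq\le M+C\lceil h_{0}\rceil$ again by $(\mathrm{Br})$, so $\|Q_{k}\|_{L^{\infty}[x_{k},x_{k}+h_{0}]}\le M+C\lceil h_{0}\rceil$ uniformly in $k$. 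Fix $\varphi\in C_{comp}^{\infty}(0,h_{0})$ with $\|\varphi\|_{L^{2}}=1$ and put $\varphi_{k}(x):=\varphi(x-x_{k})\in\mathcal{Q}$. Integration by parts (the boundary terms vanish) gives $\mathfrak{t}_{q}[\varphi_{k}]=\int|\varphi'|^{2}\,dx-\int Q_{k}\,(|\varphi_{k}|^{2})'\,dx\le\int|\varphi'|^{2}+\|Q_{k}\|_{L^{\infty}}\bigl(\int|\varphi'|^{2}+1\bigr)$, a bound independent of $k$. Passing to a subsequence whose supports are pairwise disjoint and escape to infinity, $\{\varphi_{k}\}$ is bounded in $\mathcal{Q}$ but orthonormal in $L^{2}(\mathbb{R})$, so $\mathcal{Q}\hookrightarrow L^{2}(\mathbb{R})$ is not compact and $\sigma(\mathrm{S}_{0}(q))$ is not discrete.

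\emph{Sufficiency.} Assume the Molchanov condition. Fix $h>0$ small, tile $\{x>R\}$ by the cells $J_{k}:=[R+kh,R+(k+1)h]$ (symmetrically $\{x<-R\}$), and take $R$ so large that $\mu(J_{k})>0$ for all $k$. The key step is the local lower bound: for $u\in H^{1}(J_{k})$,
\[
\int_{J_{k}}|u|^{2}\,d\mu\ \ge\ \mu(J_{k})\inf_{J_{k}}|u|^{2}\ -\ C_{1}\Bigl(\int_{J_{k}}|u'|^{2}\,dx+\int_{J_{k}}|u|^{2}\,dx\Bigr),
\]
with $C_{1}$ depending only on the Brinck constant. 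I would prove it by fixing a global antiderivative $q$ of $\mu$, setting $\widehat q_{k}(t):=q(t)-q(R+kh)=\int_{R+kh}^{t}dq$ on $J_{k}$, decomposing $\widehat q_{k}=m_{k}+r_{k}$ with $m_{k}(t):=\max_{[R+kh,t]}\widehat q_{k}$ nondecreasing, $m_{k}\ge0$, and $r_{k}:=\widehat q_{k}-m_{k}$, and then invoking $(\mathrm{Br})$ to see that $-C\le r_{k}\le0$ and $\mu(J_{k})\le m_{k}(R+(k+1)h)\le\mu(J_{k})+C$; two integrations by parts (whose boundary contributions telescope) together with $\int_{J_{k}}|(|u|^{2})'|\le\int_{J_{k}}|u'|^{2}+\int_{J_{k}}|u|^{2}$ then give the inequality. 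Combining it with the Poincaré estimate $\int_{J_{k}}|u|^{2}\le h\inf_{J_{k}}|u|^{2}+h\bigl(\int_{J_{k}}|u'|^{2}+\int_{J_{k}}|u|^{2}\bigr)$, solving for $\inf_{J_{k}}|u|^{2}$, and summing over $k$ yields
\[
\int_{|x|>R}|u|^{2}\,dx\ \le\ \Bigl(\sup_{|a|\ge R}\tfrac{2h}{\mu([a,a+h])}\Bigr)\int_{|x|>R}|u|^{2}\,d\mu\ +\ C_{2}\,h\Bigl(\int_{\mathbb{R}}|u'|^{2}\,dx+\int_{\mathbb{R}}|u|^{2}\,dx\Bigr).
\]
By $(\mathrm{Br})$ both $\int_{|x|>R}|u|^{2}\,d\mu$ and $\int_{\mathbb{R}}|u'|^{2}\,dx$ are $\le C_{3}\|u\|_{\mathcal{Q}}^{2}$, while the Molchanov condition forces $\sup_{|a|\ge R}\mu([a,a+h])^{-1}\to0$ as $R\to\infty$. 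Choosing first $h$ small and then $R$ large gives $\int_{|x|>R}|u|^{2}\,dx\le\varepsilon\|u\|_{\mathcal{Q}}^{2}$, which is the decay condition; with the local compactness this shows $\sigma(\mathrm{S}_{0}(q))$ is discrete.

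\emph{Main obstacle.} The crux is the local lower bound in the sufficiency part: it must turn the large mass $\mu(J_{k})$ into a usable lower bound $\mu(J_{k})\inf_{J_{k}}|u|^{2}$ while the negative part of $\mu$ is controlled only at unit scale through $(\mathrm{Br})$, and it must do so with error terms free of any factor $\mu(J_{k})$ — otherwise the sum over the infinitely many cells $J_{k}$ diverges. The running-maximum splitting of the local antiderivative is exactly what makes $(\mathrm{Br})$ supply this; tracking the constants there, and coordinating the two limits $h\to0$ (which is why the Molchanov condition must be assumed for every $h>0$) and then $R\to\infty$, is the technical heart. By contrast the necessity part is soft, once one observes that on a bad interval the bounded total mass together with $(\mathrm{Br})$ bounds the local antiderivative in sup-norm.
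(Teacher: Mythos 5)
Your proposal is sound and its necessity half is essentially the paper's argument (translates of a fixed bump on the ``bad'' intervals $\Delta_{\nu}$, with the form bounded via the sup-norm control of the local antiderivative that $(\mathrm{Br})$ plus the bounded total mass provide -- the paper packages this as Proposition~\ref{pr_12} rather than as an explicit integration by parts, but it is the same estimate). The sufficiency half, however, takes a genuinely different route. The paper argues by contraposition through the Rellich/M.~Riesz criterion: from a non-precompact sequence in $\mathcal{M}$ it extracts, for each $n$, a single interval $I_{n}$ of a fixed small length $l_{0}(\eta,C)$ on which the Rayleigh-type inequality \eqref{eq_62} holds, renormalizes to $v_{n}$ with $\|v_{n}\|_{L^{2}(I_n)}^{2}=l_{0}$, uses Brinck's pointwise Lemma~\ref{lm_PRtaZ12} to trap $1/4\leq|v_{n}|^{2}\leq 9/4$, and then applies the Ganelius-based Lemma~\ref{lm_DOR12} (with $f=|v_n|^{-2}$) to conclude $\int_{I_{n}}d\,q\leq C_{0}$, contradicting Molchanov. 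You instead prove a direct, quantitative local lower bound $\int_{J_k}|u|^{2}d\mu\geq\mu(J_k)\inf_{J_k}|u|^{2}-(\text{controlled error})$ via the running-maximum splitting of the local antiderivative, combine it with a Poincar\'e estimate, and sum over cells to verify the decay criterion outright. Your version buys an explicit modulus of decay (hence, in principle, eigenvalue-counting information) and avoids the inversion $f=|h|^{-2}$, at the cost of having to track constants through a two-parameter limit; the paper's version needs only one bad interval per scale and delegates all the measure-theoretic work to Lemmas~\ref{lm_PRtaZ10} and~\ref{lm_DOR12}. Two small inaccuracies in your sketch: the boundary terms in the per-cell integration by parts of $\int_{J_k}|u|^{2}d\,r_{k}$ do not literally telescope, since the running maximum is restarted in each cell; and absorbing the resulting endpoint value $|u|^{2}$ via Lemma~\ref{lm_PRtaZ12} makes your constant $C_{1}$ behave like $Ch^{-1}$ rather than depend on $C$ alone (alternatively, bound the endpoint value by $\inf_{J_k}|u|^{2}+\int_{J_k}|(|u|^{2})'|$, which only replaces $\mu(J_k)$ by $\mu(J_k)-C$). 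Neither affects the outcome, because that term enters multiplied by $h/\mu(J_k)$ and Molchanov still sends it to zero for fixed $h$, but the claim that the error is independent of $h$ should be corrected.
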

The following theorem gives a description of the quadratic forms generated by the Schr\"{o}dinger operators. We use notations and definitions from~\cite{Kt1995}.
\begin{theoremMainEng}\label{thM_VtaOR_D}
	Let the potential $q'(x)$ satisfy the condition $(\mathrm{Br})$. Then following statements are fulfilled.
	\begin{itemize}
		\item [(I)] The sesquilinear form 
		\begin{align*}\label{eq_VtaOR44}
		\dot{t}_{\dot{\mathrm{S}}_{0}(q)}[u,v]\equiv \dot{t}[u,v] & :=\left(\dot{\mathrm{S}}_{0}(q)u,v\right)_{L^{2}(\mathbb{R})}
		=\int_{\mathbb{R}}u'\overline{v'}d\,x+\int_{\mathbb{R}}u\overline{v}d\,q(x), \\ 
		\mathrm{Dom}(\dot{t}_{\dot{\mathrm{S}}_{0}(q)}) & :=\mathrm{Dom}(\dot{\mathrm{S}}_{0}(q)),
		\end{align*}
		is densely defined, symmetric, and bounded below 
		\begin{equation*}
		  \left(\dot{\mathrm{S}}_{0}(q)u,u\right)_{L^{2}(\mathbb{R})}\geq -2C^{2} \lVert u\rVert_{L^{2}(\mathbb{R})}^{2}.
		\end{equation*}
		The form $\dot{t}_{\dot{\mathrm{S}}_{0}(q)}$ is closable.
	    \item [(II)] "Potential energy"
        \begin{equation*}\label{eq_VtaOR42}
          \mathrm{Q}(u):=\lim_{M,N\rightarrow\infty}\int_{-M}^{N}|u(x)|^{2}d\,q(x)
        \end{equation*}
         exists and is finite for all $u\in \mathrm{Dom}(\mathrm{S}(q))$, moreover 
         \begin{equation*}
           \mathrm{Dom}(\mathrm{S}(q))\subset H^{1}(\mathbb{R}).
         \end{equation*}	
		\item [(III)] The closure $t$ of the sesquilinear form $\dot{t}$, $t:=(\dot{t})^{\sim}$, may be represented as:
		\begin{align*}
		t[u,v] & =\int_{\mathbb{R}}u'\overline{v'}d\,x+\lim_{M,N\rightarrow\infty}\int_{-M}^{N}u\overline{v}d\,q(x),  \\
		\mathrm{Dom}(t) &  =\left\{u\in H^{1}(\mathbb{R})\left|\,\,\exists \,\lim_{M,N\rightarrow\infty}\int_{-M}^{N}|u(x)|^{2}d\,q(x)\in \mathbb{R}\right.\right\}.
		\end{align*}
		The sesquilinear form $t$ is densely defined, closed, symmetric, and bounded below.
	\end{itemize}
\end{theoremMainEng}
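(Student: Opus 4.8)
\emph{Part (I).} The form $\dot{t}$ is densely defined and symmetric because, by the Proposition in the Appendix, $\dot{\mathrm{S}}_{0}(q)$ is a densely defined symmetric operator and $\dot{t}[u,v]=(\dot{\mathrm{S}}_{0}(q)u,v)_{L^{2}(\mathbb{R})}$. The displayed expression for $\dot{t}$ is the integration-by-parts identity
\begin{equation*}
  \bigl(\dot{\mathrm{S}}_{0}(q)u,v\bigr)_{L^{2}(\mathbb{R})}=\int_{\mathbb{R}}u'\overline{v'}\,dx+\int_{\mathbb{R}}u\overline{v}\,dq(x),\qquad u,v\in\mathrm{Dom}(\dot{\mathrm{S}}_{0}(q)),
\end{equation*}
obtained by inserting $l_{q}[u]=-(u^{[1]})'-qu^{[1]}-q^{2}u$, integrating the term with $(u^{[1]})'$ by parts (the boundary terms vanish by compact support), using $u^{[1]}=u'-qu$ to cancel the $q^{2}$-contributions, and rewriting $-\int_{\mathbb{R}}q\,(u\overline{v})'\,dx=\int_{\mathbb{R}}u\overline{v}\,dq(x)$ by Stieltjes integration by parts (legitimate, $u\overline{v}$ being in $\mathrm{AC}_{loc}(\mathbb{R})$). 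With $v=u$ this gives $\dot{t}[u,u]=\int_{\mathbb{R}}|u'|^{2}\,dx+\int_{\mathbb{R}}|u|^{2}\,dq(x)$, so the bound $\dot{t}[u,u]\geq-2C^{2}\lVert u\rVert_{L^{2}(\mathbb{R})}^{2}$ is the analytic core of the theorem, and I expect it to be the main obstacle. My plan for it is a localization: fix $\delta\in(0,1)$ and partition $\mathbb{R}$ into intervals $I_{n}=[x_{n},x_{n+1})$ of length $\delta$ whose endpoints avoid the (countably many) atoms of $dq$; on each $I_{n}$, $(\mathrm{Br})$ gives $q(t)-q(x)\leq C$ for $x_{n}\leq t\leq x\leq x_{n+1}$, which lets one write $dq=dh_{n}+dg_{n}$ on $I_{n}$ with $h_{n}(x):=\max\bigl(q(x_{n}),\,\sup_{x_{n}\leq t\leq x}(q(t)-C)\bigr)$ nondecreasing and $g_{n}:=q-h_{n}$ satisfying $|g_{n}|\leq C$ and $g_{n}(x_{n})=0$. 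Discarding $\int_{I_{n}}|u|^{2}\,dh_{n}\geq0$, integrating $\int_{I_{n}}|u|^{2}\,dg_{n}$ by parts, and summing over $n$, one is left with a boundary sum $\sum_{n}g_{n}(x_{n+1}^{-})|u(x_{n+1})|^{2}$, estimated via $\bigl|\sum_{n}|u(x_{n})|^{2}\bigr|\leq(2/\delta)\lVert u\rVert_{L^{2}(\mathbb{R})}^{2}+\delta\lVert u'\rVert_{L^{2}(\mathbb{R})}^{2}$, and a cross term $-2\,\mathrm{Re}\int_{\mathbb{R}}g\,u'\overline{u}\,dx$ with $|g|\leq C$, estimated by Cauchy--Schwarz; both absorb a controlled fraction of $\lVert u'\rVert_{L^{2}(\mathbb{R})}^{2}$ once $\delta$ is small, yielding $\dot{t}[u,u]\geq c\lVert u'\rVert_{L^{2}(\mathbb{R})}^{2}-\mathrm{const}\cdot C^{2}\lVert u\rVert_{L^{2}(\mathbb{R})}^{2}$, and an optimal choice of the partition and of the splitting brings the constant down to $2C^{2}$. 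The two-sided estimate $\lVert u'\rVert_{L^{2}(\mathbb{R})}^{2}\leq c_{1}\dot{t}[u,u]+c_{2}\lVert u\rVert_{L^{2}(\mathbb{R})}^{2}$ obtained along the way will be used repeatedly below.

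\emph{The closure $t$.} Since $\dot{\mathrm{S}}_{0}(q)$ is densely defined, symmetric and bounded below, it possesses a Friedrichs extension, a bounded-below selfadjoint operator whose quadratic form is closed and restricts to $\dot{t}$ on $\mathrm{Dom}(\dot{\mathrm{S}}_{0}(q))$; hence $\dot{t}$ has a closed extension, i.e.\ is closable --- this finishes~(I). By Theorem~\ref{thM_VtaOR_B} the closure $\mathrm{S}_{0}(q)=\overline{\dot{\mathrm{S}}_{0}(q)}$ is selfadjoint and equals $\mathrm{S}(q)$, so $\dot{\mathrm{S}}_{0}(q)$ is essentially selfadjoint and its unique selfadjoint extension is simultaneously the Friedrichs extension and $\mathrm{S}(q)$. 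Consequently $t=(\dot{t})^{\sim}$ is precisely the (unique) closed quadratic form of the selfadjoint operator $\mathrm{S}(q)$, and as such is automatically densely defined, closed, symmetric and bounded below (by $-2C^{2}$, by the estimate above).

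\emph{Part (III).} Let $\mathcal{D}:=\bigl\{u\in H^{1}(\mathbb{R}):\ \exists\,\lim_{M,N\to\infty}\int_{-M}^{N}|u|^{2}\,dq(x)\in\mathbb{R}\bigr\}$ and, for $u,v\in\mathcal{D}$, $\tau[u,v]:=\int_{\mathbb{R}}u'\overline{v'}\,dx+\lim_{M,N\to\infty}\int_{-M}^{N}u\overline{v}\,dq(x)$ (the mixed limit exists by polarization). One checks: (a) $\tau$ is densely defined, since $C_{comp}^{\infty}(\mathbb{R})\subset\mathcal{D}$; (b) $\tau$ is symmetric and, by the localization of Part (I) carried out on $\mathcal{D}$ (where the boundary contributions at $\pm\infty$ vanish, $H^{1}$-functions tending to $0$ there), bounded below by $-2C^{2}$, with the companion estimate $\lVert u'\rVert_{L^{2}(\mathbb{R})}^{2}\leq c_{1}\tau[u,u]+c_{2}\lVert u\rVert_{L^{2}(\mathbb{R})}^{2}$; (c) $\tau$ is closed --- equivalently $\mathcal{D}$ is complete in the norm $(\tau[u,u]+(c_{2}+1)\lVert u\rVert_{L^{2}(\mathbb{R})}^{2})^{1/2}$ --- and this completeness is the main point of~(III): the companion estimate makes any $\tau$-Cauchy sequence $(u_{n})$ converge in $H^{1}(\mathbb{R})$ to some $u\in H^{1}(\mathbb{R})$, and one then has to verify that $\int_{-M}^{N}|u_{n}|^{2}\,dq$ converges as $M,N\to\infty$ uniformly in $n$ (forcing $u\in\mathcal{D}$ and $\lVert u_{n}-u\rVert_{\tau}\to0$), the uniform tail control being obtained by applying the $(\mathrm{Br})$-estimate to suitable cut-offs of the $u_{n}$. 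By Kato's first representation theorem~\cite{Kt1995}, $\tau$ is the form of a unique bounded-below selfadjoint operator $T$. If $u\in\mathrm{Dom}(T)$ and $Tu=f$, then testing $\tau[u,\varphi]=(f,\varphi)_{L^{2}(\mathbb{R})}$ against $\varphi\in C_{comp}^{\infty}(\mathbb{R})$ yields $l_{q}[u]=f$ in the sense of distributions; since $u\in H^{1}(\mathbb{R})\subset\mathrm{AC}_{loc}(\mathbb{R})$ and then $(u^{[1]})'=-f-qu'\in L_{loc}^{1}(\mathbb{R})$ forces $u^{[1]}\in\mathrm{AC}_{loc}(\mathbb{R})$, we get $u\in\mathrm{Dom}(\mathrm{S}(q))$ with $\mathrm{S}(q)u=f$; thus $T\subset\mathrm{S}(q)$, and both being selfadjoint, $T=\mathrm{S}(q)$. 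Therefore $\tau$ and $t$ are both the unique closed form associated with $\mathrm{S}(q)$, whence $\tau=t$, which is the asserted representation together with the stated properties.

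\emph{Part (II).} This is now immediate from $\mathrm{Dom}(\mathrm{S}(q))=\mathrm{Dom}(T)\subset\mathrm{Dom}(\tau)=\mathcal{D}$: by the very definition of $\mathcal{D}$, this says precisely that $\mathrm{Dom}(\mathrm{S}(q))\subset H^{1}(\mathbb{R})$ and that $\mathrm{Q}(u)=\lim_{M,N\to\infty}\int_{-M}^{N}|u|^{2}\,dq(x)$ exists and is finite for every $u\in\mathrm{Dom}(\mathrm{S}(q))$; moreover $\mathrm{Q}(u)=t[u,u]-\lVert u'\rVert_{L^{2}(\mathbb{R})}^{2}=(\mathrm{S}(q)u,u)_{L^{2}(\mathbb{R})}-\lVert u'\rVert_{L^{2}(\mathbb{R})}^{2}$.
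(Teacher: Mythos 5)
Your architecture is sound and in places differs usefully from the paper's (deriving (II) as a corollary of (III) rather than proving it independently via the cut-off argument of Lemma~\ref{l_20} and Corollary~\ref{cr_dom}), but the two analytic steps that carry the whole theorem are only announced, not carried out. First, the lower bound with the \emph{specific} constant $-2C^{2}$: your plan (split $dq=dh_{n}+dg_{n}$ on a $\delta$-partition, discard the monotone part, integrate the bounded part by parts, absorb the boundary and cross terms) does yield semiboundedness by \emph{some} constant, but the claim that "an optimal choice of the partition and of the splitting brings the constant down to $2C^{2}$" is unsubstantiated; a straightforward execution of your estimates (boundary sum via the embedding inequality with parameter $t$, cross term $2C\|u\|\|u'\|$ via Cauchy--Schwarz) produces a constant strictly worse than $2C^{2}$. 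The paper gets the sharp constant from Ganelius's inequality (Lemma~\ref{lm_PRtaZ10}), which bounds $-\int_{I}|f|^{2}dq$ by $C\bigl(\inf_{I}|f|^{2}+\mathrm{var}_{I}|f|^{2}\bigr)$ directly, leading to $-\int_{I}|f|^{2}dq\leq Ch^{-1}(2\|f\|^{2}+h^{2}\|f'\|^{2})$ and then $h=C^{-1}$; your route would need an additional argument (or the theorem's constant weakened).

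Second, and more seriously, in (III) everything hinges on $\tau$ being a well-defined closed form on $\mathcal{D}$, and both points are elided. (a) "The mixed limit exists by polarization" presupposes that $\mathcal{D}$ is a linear space, i.e.\ that $\lim_{M,N}\int_{-M}^{N}u\overline{v}\,dq$ exists whenever the two diagonal limits do --- but for a signed measure this is exactly what has to be proved, not assumed; the paper obtains it only after showing $\mathcal{D}=\mathcal{R}$ (Lemma~\ref{l_26}, Theorem~\ref{th_22}), where $\mathcal{R}$ is linear by construction. (b) The completeness of $\mathcal{D}$ in the form norm is, as you yourself say, "the main point", yet your proof of it is the single clause "the uniform tail control being obtained by applying the $(\mathrm{Br})$-estimate to suitable cut-offs of the $u_{n}$" --- this is a statement of the task, not a proof, and it is not obvious that $H^{1}$-convergence of a $\tau$-Cauchy sequence forces uniform-in-$n$ convergence of the improper Stieltjes integrals. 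The paper's substitute for this step is the entire machinery of Section~5: the positivity of the local forms $\langle\cdot,\cdot\rangle_{n}$ (from Corollary~\ref{cor_10}), the Fatou/Cauchy--Schwarz argument of Lemma~\ref{l_26} identifying $\langle f,f\rangle$ with $\sum_{n}\langle f,f\rangle_{n}$, and the duality argument of Theorem~\ref{th_22} showing that any $f\in H^{1}$ with finite potential energy already lies in $\mathcal{R}=\mathrm{Dom}(\mathrm{B}^{1/2})$. Until (a) and (b) are supplied, parts (III) and (II) (which you make contingent on (III)) are not established. The remaining ingredients --- closability via the Friedrichs extension, the identification $T\subset\mathrm{S}(q)$ by testing against $C^{\infty}_{comp}(\mathbb{R})$ and the quasi-derivative regularity, and the uniqueness argument giving $\tau=t$ --- are correct.
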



\section{Proof of Theorem \ref{thM_VtaOR_B}}
We begin with formulating two necessary lemmas.

\begin{lemmaEng}[T.~Ganelius \cite{Gn1956}]\label{lm_PRtaZ10}
	Let $f\geq 0$ and $g$ be functions of bounded variation on a compact interval $J$. Then
	\begin{equation*}
	\int_{J}fd\,g\leq\left(\inf_{J}f+\mathop\mathrm{var}\limits_{J} f\right)\sup_{K\subset J}\int_{K}d\,g,
	\end{equation*}
	where $K$ is a compact subinterval of $J$. 
\end{lemmaEng}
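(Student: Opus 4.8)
Write $J=[a,b]$ and $M:=\sup_{K}\int_{K}dg$, the supremum over compact subintervals $K\subseteq J$; note $M\geq 0$, since shrinking $K$ to an interior point at which $g$ is continuous sends $\int_{K}dg$ to $0$. The plan rests on three ideas: replacing $dg$ by the measure of its ``upward excursions'', which has a primitive confined to $[0,M]$; one Stieltjes integration by parts; and the Jordan decomposition of $f$.

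\emph{Step 1.} Let $G(x):=\int_{[a,x]}dg$, so that $dG=dg$, $G\in\mathrm{BV}(J)$, and the hypothesis reads $\int_{[s,t]}dg=G(t)-G(s-)\leq M$ for $a\leq s\leq t\leq b$: the function $G$ never rises more than $M$ above an earlier value. Set $\Gamma(x):=\max\bigl(0,\sup_{a\leq y\leq x}\int_{[y,x]}dg\bigr)$. One checks that $0\leq\Gamma\leq M$ on $J$, that $\Gamma\in\mathrm{BV}(J)$, and that $G-\Gamma$ is non-increasing; thus $dg=d\Gamma+d(G-\Gamma)$ with $d(G-\Gamma)$ a non-positive measure, so that, since $f\geq 0$,
\[
\int_{J}f\,dg=\int_{J}f\,d\Gamma+\int_{J}f\,d(G-\Gamma)\leq\int_{J}f\,d\Gamma .
\]

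\emph{Step 2.} Write $f=f(a)+\pi-\nu$, with $\pi,\nu$ the positive- and negative-variation functions of $f$ on $J$ (non-decreasing, nonnegative, vanishing at $a$, $\pi(b)+\nu(b)=\operatorname{var}_{J}f$). Integration by parts, with the natural one-sided-continuous representatives, gives
\[
\int_{J}f\,d\Gamma=f(b)\Gamma(b)-\int_{J}\Gamma(x-)\,df(x),
\]
the boundary term at $a$ dropping out because $\Gamma(a-)=0$. Here $f(b)\Gamma(b)\leq f(b)M$ since $0\leq\Gamma(b)\leq M$ and $f(b)\geq0$, while writing $df=d\pi-d\nu$ and using $0\leq\Gamma(x-)\leq M$, $d\pi,d\nu\geq0$, $\nu(a)=0$,
\[
-\int_{J}\Gamma(x-)\,df=-\int_{J}\Gamma(x-)\,d\pi+\int_{J}\Gamma(x-)\,d\nu\leq 0+M\nu(b).
\]
Combining, and using $f(b)=f(a)+\pi(b)-\nu(b)$ together with $f(a)\leq\inf_{J}f+\nu(b)$ (which follows from $f(x)=f(a)+\pi(x)-\nu(x)\geq f(a)-\nu(b)$) and $M\geq0$,
\[
\int_{J}f\,dg\leq M\bigl(f(b)+\nu(b)\bigr)=M\bigl(f(a)+\pi(b)\bigr)\leq M\bigl(\inf_{J}f+\nu(b)+\pi(b)\bigr)=M\bigl(\inf_{J}f+\operatorname{var}_{J}f\bigr),
\]
which is the assertion.

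The conceptual core --- and the step I expect to need the most care --- is Step 1: the hypothesis bounds the \emph{rise above the running infimum} of $G$, not the oscillation of $G$, and it is precisely this that makes $M$ interact with $\operatorname{var}_{J}f$ with the sharp constant (a cruder estimate, e.g.\ $\bigl|\int_{J}\Gamma\,df\bigr|\leq M\operatorname{var}_{J}f$, would cost a spurious factor $2$). The rest is routine but demands attention to Stieltjes bookkeeping: the one-sided limits in the by-parts identity and in the definition of $\Gamma$, and atoms of $dg$ at $a$ and $b$ --- these cause no difficulty once consistent conventions are fixed (and none at all when $g$ is continuous at the endpoints of $J$, to which one may reduce). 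Along the way one verifies the elementary claims used: $M\geq0$; $0\leq\Gamma\leq M$ with $G-\Gamma$ non-increasing; and that $\pi,\nu$ can be taken with $\pi(a)=\nu(a)=0$.
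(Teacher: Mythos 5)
The paper offers no proof of this lemma at all: it is quoted from Ganelius's note \cite{Gn1956} and used as a black box, so there is nothing in the text to compare your argument against. Your proof is, as far as I can verify, a correct and self-contained derivation, and the two claims on which it really hinges both check out. First, with $G(x)=\int_{[a,x]}d\,g$ and $\Gamma(x)=\max\bigl(0,\sup_{a\le y\le x}\int_{[y,x]}d\,g\bigr)$ one indeed has $0\le\Gamma\le M$ and $G-\Gamma$ non-increasing: for $x_{1}<x_{2}$, if $\Gamma(x_{1})=0$ then letting $y\downarrow x_{1}$ gives $\Gamma(x_{2})\ge\int_{(x_{1},x_{2}]}d\,g=G(x_{2})-G(x_{1})$, while if $\Gamma(x_{1})=G(x_{1})-\inf_{y\le x_{1}}G(y-)>0$ then restricting the supremum defining $\Gamma(x_{2})$ to $y\le x_{1}$ gives $\Gamma(x_{2})-\Gamma(x_{1})\ge G(x_{2})-G(x_{1})$. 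Second, the by-parts identity with the left limit placed on $\Gamma$ is the correct form of Lebesgue--Stieltjes integration by parts once right-continuous representatives are fixed, and your Step 2 bookkeeping then yields exactly $M\left(\inf_{J}f+\operatorname{var}_{J}f\right)$ with no lost factor of $2$. The one caveat worth recording is that for a genuinely discontinuous $f$ the integral $\int_{J}f\,d\,g$ itself requires a convention (a Riemann--Stieltjes integral need not exist when $f$ and $g$ share a jump), so both the statement and your proof should be read with fixed one-sided-continuous representatives; in the paper's only application $f=|u|^{2}$ with $u\in H^{1}$ is continuous and none of this matters. Conceptually, your decomposition $d\,g=d\,\Gamma+d\,(G-\Gamma)$ is a Skorokhod-reflection--type, continuous-parameter version of the Abel-summation argument one would expect in the original source; it isolates cleanly why the relevant quantity is the rise of $G$ over compact subintervals rather than its total variation, which is precisely what makes the lemma usable under the one-sided condition $(\mathrm{Br})$.
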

Lemma \ref{lm_PRtaZ10} is crucial in our proof of the fact that the preminimal operator~$\dot{\mathrm{S}}_{0}(q)$ is  bounded below under the condition $(\mathrm{Br})$.

The following lemma plays a technical role.
\begin{lemmaEng}[I. Brinck \cite{Bri1959}]\label{lm_PRtaZ12}
	Let $J$ be a compact interval of length $l$. Then for all $x\in J$ and $f\in H^{1}(J)$ we have
	\begin{equation*}\label{eq_PRtaZ12}
	\frac{1}{2}l^{-1}\| f\|_{L^{2}(J)}^{2}-\frac{1}{2}l\| f'\|_{L^{2}(J)}^{2}\leq |f(x)|^{2}\leq 2t^{-1}\| f\|_{L^{2}(J)}^{2}+ t\| f'\|_{L^{2}(J)}^{2},\quad 0<t\leq l,
	\end{equation*}
	and
	\begin{equation*}\label{eq_PRtaZ14}
	\inf_{x\in J}|f(x)|^{2}\leq l^{-1}\| f\|_{L^{2}(J)}^{2}.
	\end{equation*}
\end{lemmaEng}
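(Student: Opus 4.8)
The plan is to reduce all three estimates to one elementary mechanism. Since $H^{1}(J)\hookrightarrow C(\overline{J})$, every $f\in H^{1}(J)$ has an absolutely continuous representative, so $f(x)-f(y)=\int_{y}^{x}f'(s)\,ds$ for all $x,y\in J$, and the Cauchy--Schwarz inequality gives the oscillation bound
\[
|f(x)-f(y)|\le|x-y|^{1/2}\,\|f'\|_{L^{2}(J)},\qquad x,y\in J .
\]
This is the only analytic input; everything else is bookkeeping with the elementary inequality $(a+b)^{2}\le 2a^{2}+2b^{2}$ and with explicit integrals of $|x-y|$. The last inequality of the lemma is immediate and needs none of this: $|f|^{2}\ge\inf_{J}|f|^{2}$ pointwise on $J$, so integrating over $J$ gives $\|f\|_{L^{2}(J)}^{2}\ge l\cdot\inf_{x\in J}|f(x)|^{2}$.

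For the lower bound, fix $x\in J$ and take $J=[0,l]$ without loss of generality. From $|f(y)|\le|f(x)|+|x-y|^{1/2}\|f'\|_{L^{2}(J)}$ and $(a+b)^{2}\le 2a^{2}+2b^{2}$ we obtain $|f(y)|^{2}\le 2|f(x)|^{2}+2|x-y|\,\|f'\|_{L^{2}(J)}^{2}$. Integrating in $y$ over $J$ and using $\int_{J}|x-y|\,dy=x^{2}-lx+\tfrac12 l^{2}\le\tfrac12 l^{2}$ for $x\in[0,l]$ yields $\|f\|_{L^{2}(J)}^{2}\le 2l\,|f(x)|^{2}+l^{2}\,\|f'\|_{L^{2}(J)}^{2}$, which rearranges to the claimed $\tfrac12 l^{-1}\|f\|_{L^{2}(J)}^{2}-\tfrac12 l\,\|f'\|_{L^{2}(J)}^{2}\le|f(x)|^{2}$.

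For the upper bound, the free parameter $t\in(0,l]$ is introduced by averaging over a sub-interval of length comparable to $t$ rather than over all of $J$. Fix $x\in J$ and put $E:=\{\,y\in J:|x-y|\le t\,\}$. I will check that $|E|\ge t$ (uniformly in the position of $x$, the tight case being $x$ an endpoint of $J$) and that $\int_{E}|x-y|\,dy\le\tfrac12\,t\,|E|$ (a short case analysis on the position of $x$ relative to the endpoints). Starting from $|f(x)|\le|f(y)|+|x-y|^{1/2}\|f'\|_{L^{2}(J)}$, squaring as before, and averaging $|E|^{-1}\int_{E}(\cdot)\,dy$ over $E$ gives
\[
|f(x)|^{2}\le\frac{2}{|E|}\int_{E}|f(y)|^{2}\,dy+\frac{2}{|E|}\Bigl(\int_{E}|x-y|\,dy\Bigr)\|f'\|_{L^{2}(J)}^{2}\le 2t^{-1}\|f\|_{L^{2}(J)}^{2}+t\,\|f'\|_{L^{2}(J)}^{2},
\]
which is the claimed upper bound.

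The step I expect to need the most care is pinning down the constants, and in particular avoiding two natural wasteful moves: the length appearing in Cauchy--Schwarz must be kept as $|x-y|$ rather than replaced by $l$ or by $t$, so that the subsequent integral of $|x-y|$ supplies the decisive factor $\tfrac12$; and for the upper bound the averaging has to be over the set $E$, not over all of $J$, which would only reproduce the case $t=l$. Correspondingly, the one genuinely fiddly computation is verifying $|E|\ge t$ and $\int_{E}|x-y|\,dy\le\tfrac12 t\,|E|$ uniformly in $x$, including when $x$ lies near an endpoint of $J$ and $E$ is one-sided.
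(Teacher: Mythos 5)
Your proof is correct: the oscillation bound $|f(x)-f(y)|\le|x-y|^{1/2}\|f'\|_{L^{2}(J)}$, the integral $\int_{J}|x-y|\,dy=x^{2}-lx+\tfrac12 l^{2}\le\tfrac12 l^{2}$, and the two properties $|E|\ge t$ and $\int_{E}|x-y|\,dy=\tfrac12\bigl((x-a)^{2}+(b-x)^{2}\bigr)\le\tfrac12 t|E|$ all check out, and they deliver exactly the stated constants. There is nothing in the paper to compare against: the authors quote this lemma from Brinck \cite{Bri1959} without proof. For what it is worth, the classical route (Brinck's) starts instead from $|f(x)|^{2}-|f(y)|^{2}=\int_{y}^{x}(|f|^{2})'\,ds\le 2\|f\|_{L^{2}(J)}\|f'\|_{L^{2}(J)}$ and then averages $y$ over a subinterval of length $t$, splitting the cross term by Young's inequality; your version, which keeps the factor $|x-y|$ inside the Cauchy--Schwarz estimate and only integrates it at the end, is a clean alternative and in fact recovers the sharper coefficient $\tfrac12 l$ on the derivative term in the lower bound, where the cruder splitting would lose a constant.
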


\begin{lemmaEng}\label{lm_PRtaZ14}
Let $q'(x)$ satisfy the condition $(\mathrm{Br})$. 
If $I$ is a finite interval of length $l$ and if $f\in H^{1}(I)$, then 
\begin{equation}\label{eq_PRtaZ16}
 \int_{I}|f(x)|^{2}d\,q(x)\geq -C\left(2(hl/n)^{-1}\| f\|_{L^{2}(I)}^{2}+(hl/n)\| f'\|_{L^{2}(I)}^{2}\right),
\end{equation}
where $n$ is an integer such that $n-1<l\leq n$, and $h$ is an arbitrary number from $(0,1]$.
\end{lemmaEng}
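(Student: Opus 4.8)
The plan is to localize by a uniform partition of $I$ and to apply Ganelius's inequality (Lemma~\ref{lm_PRtaZ10}) on each piece to the pair $f_{0}:=|f|^{2}$, $g:=-q$; the condition $(\mathrm{Br})$ then controls $\sup_{K}\int_{K}d(-q)$ on each piece, and two elementary bounds reassemble the right-hand side of \eqref{eq_PRtaZ16}.

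First I would split $I$ into $n$ consecutive subintervals $I_{1},\dots ,I_{n}$ of length $l/n$. Since $n\geq l$ we have $l/n\leq 1$, so $(\mathrm{Br})$ applies on each $I_{k}$ and on every compact subinterval $K\subset I_{k}$: indeed $\int_{K}d(-q)=-\int_{K}dq\leq C$, hence $\sup_{K\subset I_{k}}\int_{K}d(-q)\leq C$. On a fixed $I_{k}$ the function $|f|^{2}$ is nonnegative and absolutely continuous (as $f\in H^{1}\subset \mathrm{AC}$), while $q\in \mathrm{BV}_{loc}$; therefore Lemma~\ref{lm_PRtaZ10} gives
\[
\int_{I_{k}}|f|^{2}\,d(-q)\leq \Bigl(\inf_{I_{k}}|f|^{2}+\mathop\mathrm{var}\limits_{I_{k}}|f|^{2}\Bigr)C ,
\]
that is, $\int_{I_{k}}|f|^{2}\,dq\geq -C\bigl(\inf_{I_{k}}|f|^{2}+\mathop\mathrm{var}\limits_{I_{k}}|f|^{2}\bigr)$. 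Summing over $k$ (if $q$ has an atom at one of the $n-1$ interior division points, first shift that point by an arbitrarily small amount, keeping every subinterval of length in $[hl/n,l/n]$, which is possible when $h<1$; the case $h=1$ follows by letting $h\uparrow 1$ afterwards) one obtains
\[
\int_{I}|f|^{2}\,dq\geq -C\sum_{k=1}^{n}\Bigl(\inf_{I_{k}}|f|^{2}+\mathop\mathrm{var}\limits_{I_{k}}|f|^{2}\Bigr).
\]

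It remains to estimate the two sums. By the second inequality of Lemma~\ref{lm_PRtaZ12} applied on $I_{k}$ we have $\inf_{I_{k}}|f|^{2}\leq (l/n)^{-1}\|f\|_{L^{2}(I_{k})}^{2}$, so
\[
\sum_{k=1}^{n}\inf_{I_{k}}|f|^{2}\leq (l/n)^{-1}\|f\|_{L^{2}(I)}^{2}\leq (hl/n)^{-1}\|f\|_{L^{2}(I)}^{2}
\]
since $0<h\leq 1$. For the variations, absolute continuity of $|f|^{2}$ on $I$ gives $\sum_{k}\mathop\mathrm{var}\limits_{I_{k}}|f|^{2}=\int_{I}\bigl|(|f|^{2})'\bigr|\,dx\leq 2\int_{I}|f|\,|f'|\,dx\leq 2\|f\|_{L^{2}(I)}\|f'\|_{L^{2}(I)}$, and Young's inequality $2ab\leq s^{-1}a^{2}+sb^{2}$ with $s=hl/n$ bounds the last quantity by $(hl/n)^{-1}\|f\|_{L^{2}(I)}^{2}+(hl/n)\|f'\|_{L^{2}(I)}^{2}$. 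Adding the two bounds yields exactly $2(hl/n)^{-1}\|f\|_{L^{2}(I)}^{2}+(hl/n)\|f'\|_{L^{2}(I)}^{2}$, which together with the last display of the previous paragraph is \eqref{eq_PRtaZ16}.

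The two estimates above and the choice of $s$ in Young's inequality are entirely routine; the one step requiring a little care is the behaviour of the measure $dq$ at the partition nodes, since naively summing Ganelius's bounds over the closed pieces would overcount atoms of $q$ sitting at interior nodes — this is handled by placing those nodes off the (at most countable) atom set of $q$. I do not foresee any further obstacle.
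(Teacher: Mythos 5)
Your proof is correct and arrives at exactly the bound \eqref{eq_PRtaZ16}, but by a genuinely different route from the paper. The paper first proves the case $l=1$ (applying Lemma~\ref{lm_PRtaZ10} once to the whole interval, with $\inf_{I}|f|^{2}\leq\|f\|^{2}\leq h^{-1}\|f\|^{2}$ and $\operatorname{var}_{I}|f|^{2}\leq 2\|f\|\,\|f'\|$), and then handles general $l$ by the substitution $Q(x)=q(ln^{-1}x)$, reducing to $n$ unit intervals and rescaling back. You instead partition $I$ directly into $n$ pieces of length $l/n\leq 1$, apply Ganelius locally on each piece, control $\sum_{k}\inf_{I_{k}}|f|^{2}$ via the second inequality of Lemma~\ref{lm_PRtaZ12} and $\sum_{k}\operatorname{var}_{I_{k}}|f|^{2}=\operatorname{var}_{I}|f|^{2}$ via a single global Cauchy--Schwarz plus Young with parameter $s=hl/n$. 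This avoids the change of variables entirely and is, if anything, cleaner; the two decompositions are equivalent in substance since the paper's rescaling is just your uniform partition in disguise. One quibble: your parenthetical about shifting interior nodes off the atom set of $q$ is both unnecessary and not executable as written --- unnecessary because for the Riemann--Stieltjes integral of the continuous function $|f|^{2}$ against $q$ additivity over adjacent subintervals holds without double-counting atoms (equivalently, one may take the pieces half-open, as the paper does with $[n,n+1)$ in Theorem~\ref{th_PRtaZ10}); and not executable because $n$ intervals of length at most $l/n$ summing to $l$ must all have length exactly $l/n$, so there is no room to shift while keeping every length in $[hl/n,\,l/n]$. Deleting that parenthetical leaves a complete and correct proof.
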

\begin{proof}
There is no loss of generality in supposing that $I=(0,l)$. 

We first suppose $l=1$ and apply Lemma~\ref{lm_PRtaZ10}. Thus 
\begin{equation*}\label{eq_PRtaZ18}
-\int_{I}|f(x)|^{2}d\,q(x)\leq -\left(\inf_{I}\lvert f\rvert^{2}+\mathop\mathrm{var}\limits_{J} \lvert f\rvert^{2}\right)\sup_{K\subset I}\int_{K}d\,q(x).
\end{equation*}
Due to $(\mathrm{Br})$ the factor $-\sup_{K\subset I}\int_{K}d\,q(x)$ is majorized by $C$, and from Lemma~\ref{lm_PRtaZ12} we get
\begin{equation*}
\inf_{x\in I}|f(x)|^{2}\leq \| f\|_{L^{2}(I)}^{2}\leq h^{-1} \| f\|_{L^{2}(I)}^{2},\qquad h\in (0,1].
\end{equation*}
We now write $f(x)=f_{1}(x)+if_{2}(x)$, where $f_{1}$ and $f_{2}$ are real functions. 
Due to Cauchy's inequality we get 
\begin{align*}\label{eq_PRtaZ20}
\mathop\mathrm{var}_{I}|f(x)|^{2} & =\int_{I}\left|\frac{d}{dx}|f(x)|^{2}\right|dx= \int_{I}\left|f_{1}f_{1}'+f_{2}f_{2}'\right|dx
\leq 2\| f\|_{L^{2}(I)}\| f'\|_{L^{2}(I)},
\end{align*}
and, hence, 
\begin{equation*}
\begin{split}
-\int_{I}|f(x)|^{2}d\,q(x)\leq C h^{-1}\left(\| f\|_{L^{2}(I)}^{2}+2h\| f\|_{L^{2}(I)}\| f'\|_{L^{2}(I)}\right)\\\leq
Ch^{-1}\left(2\| f\|_{L^{2}(I)}^{2}+h^{2}\| f'\|_{L^{2}(I)}^{2}\right),
\end{split}
\end{equation*}
which proves the lemma for $l=1$.

To prove the lemma for arbitrary $l$ we put $Q(x)=q(ln^{-1}x)$. Then
\begin{align*}\label{eq_PRtaZ22}
\int_{0}^{l}|f(x)|^{2}d\,q(x) & =\int_{0}^{n}|f(ln^{-1}x)|^{2}d\,q(ln^{-1}x)=\int_{0}^{n}|f(ln^{-1}x)|^{2}d\,Q(x) \\
& =\sum_{k=1}^{n}\int_{k-1}^{k}|f(ln^{-1}x)|^{2}d\,Q(x).
\end{align*}
Note that the function $Q$ satisfies condition $(\mathrm{Br})$ with the same constant $C$ for all intervals of
length $\leq n/l$ and, hence, for all intervals of length $\leq 1$. 
Therefore the assumption of lemma for intervals of unit length implies 
\begin{equation*}\label{eq_PRtaZ24}
\int_{k-1}^{k}|f(ln^{-1}x)|^{2}d\,Q(x)\geq -C\left(2h^{-1}\int_{k-1}^{k}|f(ln^{-1}x)|^{2}d\,x+h\int_{k-1}^{k}\frac{d}{dx}|f(ln^{-1}x)|^{2}d\,x\right),
\end{equation*}
and hence, summing over $k$, we get
\begin{align*}
\int_{0}^{n}|f(ln^{-1}x)|^{2}d\,Q(x) & \geq -C\left(2h^{-1}\int_{0}^{n}|f(ln^{-1}x)|^{2}d\,x+ h\int_{0}^{n}\frac{d}{dx}|f(ln^{-1}x)|^{2}d\,x\right) \\
& =-C\left(2h^{-1}l^{-1}n\int_{0}^{l}|f(x)|^{2}d\,x+ h l n^{-1}\int_{0}^{l}|f'(x)|^{2}d\,x\right),
\end{align*}
which proves the lemma.
\end{proof}

\begin{corollaryLmEng}\label{cor_10}
If the length of an interval $I$ does not exceed $1$, then 
\begin{equation*}
	\int_{I}|u'(x)|^{2}d\,x+2C^{2}\int_{I}|u(x)|^{2}d\,x+\int_{I}|u(x)|^{2}d\,q(x)\geq 0
\end{equation*}
for any $u\in H^{1}(I)$.
\end{corollaryLmEng}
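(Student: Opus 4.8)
The plan is to apply Lemma~\ref{lm_PRtaZ14} directly with the interval $I$ of length $l \le 1$, which forces the integer $n$ with $n-1 < l \le n$ to equal $1$. The lemma then yields, for any $f = u \in H^{1}(I)$ and any $h \in (0,1]$,
\begin{equation*}
\int_{I}|u(x)|^{2}\,d\,q(x) \ge -C\left(2(hl)^{-1}\|u\|_{L^{2}(I)}^{2} + hl\,\|u'\|_{L^{2}(I)}^{2}\right).
\end{equation*}
Rearranging, it suffices to choose $h \in (0,1]$ so that $C\,hl \le 1$ (handling the $\|u'\|^{2}$ term) and simultaneously $2C(hl)^{-1} \le 2C^{2}$ (handling the $\|u\|^{2}$ term); both amount to $hl = 1/C$, which is admissible since $C \ge 2$ and $l \le 1$ give $1/(Cl) \ge 1/C$, so one may take $h = 1/(Cl) \wedge 1 = 1/(Cl)$ when $l \ge 1/C$, or $h=1$ otherwise — in either case the two bounds hold. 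With this choice,
\begin{equation*}
\int_{I}|u(x)|^{2}\,d\,q(x) \ge -\|u'\|_{L^{2}(I)}^{2} - 2C^{2}\|u\|_{L^{2}(I)}^{2},
\end{equation*}
which is exactly the claimed inequality after moving everything to one side.

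The one point requiring a little care is the optimization of $h$: if $l < 1/C$ then the unconstrained optimal choice $h = 1/(Cl)$ exceeds $1$ and is not allowed, so one takes $h = 1$ instead, and must check that $2C/l \le 2C^{2}$, i.e. $l \ge 1/C$ — which fails. Thus when $l < 1/C$ a slightly different bookkeeping is needed: with $h=1$ the lemma gives $-C(2l^{-1}\|u\|^{2} + l\|u'\|^{2})$, and since $l \le 1/C \le 1/2 < 1$ the coefficient $Cl < 1$ controls the derivative term, while $2C/l$ may be large; however, one then simply uses a smaller $h$ only in the regime where it stays $\le 1$. The clean way is: set $h = \min\{1, 1/(Cl)\}$. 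If $Cl \ge 1$ this is $1/(Cl) \le 1$, giving coefficients $2C \cdot Cl \cdot \frac1{l} \cdot \frac1{l}$... — more transparently, $2C(hl)^{-1} = 2C \cdot Cl / l \cdot$, let me just say: with $hl = \min\{l, 1/C\} \le 1/C$ one gets $2C(hl)^{-1} \ge 2C^{2}$, which is the wrong direction.

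The resolution — and this is the genuinely load-bearing step — is to observe that making $h$ \emph{smaller} only shrinks the $\|u'\|^{2}$ penalty but enlarges the $\|u\|^{2}$ penalty, so one cannot win both ways with a single $h$; instead the correct reading is that we are free to pick $h$ to kill the $\|u'\|^2$ term at the cost of whatever $\|u\|^2$ coefficient results, and the claim $C \ge 2$ is precisely calibrated so that this coefficient is at most $2C^2$. Concretely, take $h = 1/l$ when $l \ge 1$ — impossible here since $l \le 1$ — so take $h = 1$: then $Chl = Cl \le C$, not $\le 1$. Hence the honest approach is to rescale: apply Lemma~\ref{lm_PRtaZ14} not on $I$ but after the substitution that normalizes length, or equivalently invoke the $l=1$ case of the lemma's proof directly with the free parameter $h$ ranging over $(0,1]$ and pick $h = 1/C \le 1/2 \le 1$, giving coefficients $2C \cdot C = 2C^{2}$ on $\|u\|^{2}$ and $C \cdot (1/C) = 1$ on $\|u'\|^{2}$ exactly; the length-$l$ case with $l \le 1$ follows because shrinking the interval only improves the Poincaré-type constants in Lemma~\ref{lm_PRtaZ12}. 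I expect this reconciliation of the parameter ranges — confirming that $h = 1/C$ is legitimate and yields precisely the constants $1$ and $2C^{2}$ — to be the only real obstacle; everything else is substitution into Lemma~\ref{lm_PRtaZ14}.
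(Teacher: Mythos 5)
Your diagnosis of the difficulty is accurate, and your computation for the unit-length case is exactly the paper's argument: the authors take $h=n/(lC)$ in \eqref{eq_PRtaZ16}, which for $l=n=1$ is $h=1/C\leq 1/2$ and produces the coefficients $2C\cdot C=2C^{2}$ and $C\cdot C^{-1}=1$ on the nose. But your final step --- reducing an interval of length $l<1$ to the unit-length case on the grounds that ``shrinking the interval only improves the Poincar\'e-type constants in Lemma~\ref{lm_PRtaZ12}'' --- is false; the monotonicity goes the wrong way. The relevant inequality of Lemma~\ref{lm_PRtaZ12} is $\inf_{x\in I}|f(x)|^{2}\leq l^{-1}\|f\|_{L^{2}(I)}^{2}$, and the factor $l^{-1}$ blows up as $l\to 0$. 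This is not a repairable bookkeeping issue: the inequality of Corollary~\ref{cor_10}, read literally for short intervals, is simply false. Take $I=(0,l)$ with $l<1/(2C)$, let $q$ have a single jump of size $-C$ at an interior point of $I$ (this satisfies $(\mathrm{Br})$), and let $u\equiv 1$; then the left-hand side equals $2C^{2}l-C<0$.

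What is actually true, and what the paper proves, is the statement for intervals of length $l\geq 1$; the hypothesis ``does not exceed $1$'' in the printed corollary is evidently a misprint for ``is not less than $1$''. The paper's one-line proof says precisely ``we may conclude that $n/lC<1$ if $l\geq 1$'' (using $n<l+1$ and $C\geq 2$ to get $n/(lC)<(l+1)/(lC)\leq 2/C\leq 1$), and every later application of the corollary is to an interval of length $\geq 1$ (the intervals $(-n,n)$ in the proof of Theorem~\ref{thM_VtaOR_C} and $[n-1,n]$ in Lemma~\ref{l_26}). Your direct computation with $n=1$ and $h=1/(Cl)$ in fact covers $1/C\leq l\leq 1$, and the paper's choice $h=n/(lC)$ covers all $l\geq 1$; below $l=1/(2C)$ the claim fails outright. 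So the genuine gap in your proposal is the unjustified (and untrue) monotonicity claim at the end; once the hypothesis is corrected to $l\geq 1$, your argument coincides with the paper's.
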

\begin{proof}
Due to the choice of $n$ in Lemma~\ref{lm_PRtaZ14}, we get $n/lC<(l+1)/lC$. 
Since we assume that $C\geq 2$, we may conclude that $n/lC<1$ if $l\geq 1$. 
Thus, we may put $h=n/lC$ in \eqref{eq_PRtaZ16}, which yields the corollary.
\end{proof}
\begin{corollaryLmEng}\label{cr_10LmPRtaZ14}
Let the condition $(\mathrm{Br})$ be satisfied. Then 
\begin{equation}\label{eq_PRtaZ25}
	\int_{\mathbb{R}}|u(x)|^{2}d\,q(x)\geq -C\left(2h^{-1}\| u\|_{L^{2}(\mathbb{R})}^{2}+h\| u'\|_{L^{2}(\mathbb{R})}^{2}\right)\quad
\end{equation}
for all $ u\in H_{comp}^{1}(\mathbb{R})$ and $h\in (0,1].$
\end{corollaryLmEng}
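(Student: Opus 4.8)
The plan is to reduce the global estimate \eqref{eq_PRtaZ25} to the single-interval estimate of Lemma~\ref{lm_PRtaZ14}. Since $u\in H_{comp}^{1}(\mathbb{R})$ has compact support, I would fix a positive integer $N$ and a real number $a$ with $\operatorname{supp}u\Subset(a,a+N)=:I$. Then $u$ (in its continuous representative) and $u'$ vanish outside a closed subinterval of the open interval $(a,a+N)$, so that
\begin{equation*}
\|u\|_{L^{2}(I)}=\|u\|_{L^{2}(\mathbb{R})},\qquad \|u'\|_{L^{2}(I)}=\|u'\|_{L^{2}(\mathbb{R})},\qquad \int_{I}|u(x)|^{2}d\,q(x)=\int_{\mathbb{R}}|u(x)|^{2}d\,q(x).
\end{equation*}
The last identity holds because $|u|^{2}$ vanishes on $\mathbb{R}\setminus I$ and also near the endpoints $a,a+N$, hence no atom of the measure $d\,q$ sitting outside $(a,a+N)$ can contribute to the potential-energy integral.

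Next I would apply Lemma~\ref{lm_PRtaZ14} to $I$, whose length is $l=N$. Since $N$ is a positive integer, the integer $n$ determined by $n-1<l\leq n$ is exactly $n=N$, so $hl/n=h$ for every $h\in(0,1]$. Substituting $n=N$, $l=N$ into \eqref{eq_PRtaZ16} gives
\begin{equation*}
\int_{I}|u(x)|^{2}d\,q(x)\geq -C\left(2h^{-1}\|u\|_{L^{2}(I)}^{2}+h\|u'\|_{L^{2}(I)}^{2}\right),
\end{equation*}
and combining this with the three identities above yields precisely \eqref{eq_PRtaZ25}.

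There is essentially no serious obstacle here; the only point needing a little care is to choose the enclosing interval $I$ to have integer length and to contain $\operatorname{supp}u$ strictly, so that on the one hand Lemma~\ref{lm_PRtaZ14} applies with the clean relation $n=l$, and on the other hand restricting $d\,q$ to $I$ does not alter the value of $\int_{\mathbb{R}}|u|^{2}d\,q$. (Alternatively, one could partition a closed interval containing $\operatorname{supp}u$ into unit subintervals whose endpoints avoid the at most countably many atoms of $d\,q$, invoke the $l=1$ case of Lemma~\ref{lm_PRtaZ14} on each piece, and sum the resulting inequalities, using additivity of all three terms over the partition; this route leads to the same conclusion.)
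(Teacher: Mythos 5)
Your proof is correct and essentially the same as the paper's: the paper divides $\mathbb{R}$ into disjoint unit intervals and sums the $l=n=1$ case of Lemma~\ref{lm_PRtaZ14}, while you apply that lemma once to a single interval of integer length $N$ containing $\operatorname{supp}u$ (for which $hl/n=h$), which after unwinding the lemma's own proof amounts to the same partition-and-sum. Your parenthetical alternative is in fact exactly the argument the paper gives.
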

\begin{proof}
We divide the real axis into a sum of disjoint intervals of unit length. 
Then \eqref{eq_PRtaZ16} holds on each of these intervals and the summation gives \eqref{eq_PRtaZ25}.
\end{proof}
\begin{remarkEng*}
If the support of $u$ is not compact, corollary \ref{cr_10LmPRtaZ14} obviously still holds if
\begin{equation*}
	\lim_{M,N\rightarrow\infty}\int_{-M}^{N}|u(x)|^{2}d\,q(x)
\end{equation*}
exists as improper Riemann--Stieltjes integral. 
Then the integral in \eqref{eq_PRtaZ25} must, of course, be interpreted accordingly.
\end{remarkEng*}

Lemma \ref{lm_PRtaZ14} allows us to prove that the preminimal operator is bounded below.
\begin{theoremEng}\label{th_PRtaZ10}
	Let the potential $q'(x)$ satisfy the condition $(\mathrm{Br})$. Then the preminimal operator $\dot{\mathrm{S}}_{0}(q)$ is bounded below and the following estimate holds:
	\begin{equation*}\label{eq_PRtaZ26}
	(\dot{\mathrm{S}}_{0}(q)u,u)\geq -2 C^{2}\|u\|_{L^{2}(\mathbb{R})}^{2},\qquad u\in \mathrm{Dom}(\dot{S}_{0}(q)).
	\end{equation*}
\end{theoremEng}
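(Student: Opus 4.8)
The plan is to express $(\dot{\mathrm{S}}_{0}(q)u,u)_{L^{2}(\mathbb{R})}$ as a Dirichlet-type form plus the ``potential energy'' $\int_{\mathbb{R}}|u|^{2}\,dq$ and then to apply Corollary~\ref{cr_10LmPRtaZ14}.

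\emph{Step 1 (regularity).} First I would check that every $u\in\mathrm{Dom}(\dot{\mathrm{S}}_{0}(q))$ lies in $H^{1}_{comp}(\mathbb{R})$. Indeed, by definition $u$ and $u^{[1]}=u'-qu$ belong to $\mathrm{AC}_{loc}(\mathbb{R})$, hence are locally bounded, so $u'=u^{[1]}+qu\in L^{2}_{loc}(\mathbb{R})$ since $q\in L^{2}_{loc}(\mathbb{R})$; as $\mathrm{supp}\,u$ is compact this gives $u\in H^{1}(\mathbb{R})$ with compact support, and moreover $u^{[1]}$ vanishes outside $\mathrm{supp}\,u$, so all boundary terms below will disappear.

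\emph{Step 2 (form identity).} Next I would establish
\[
(\dot{\mathrm{S}}_{0}(q)u,u)_{L^{2}(\mathbb{R})}=\int_{\mathbb{R}}|u'(x)|^{2}\,dx+\int_{\mathbb{R}}|u(x)|^{2}\,dq(x).
\]
Rearranging the definition of $l_{q}$ and using $u^{[1]}+qu=u'$ gives $l_{q}[u]=-(u^{[1]})'-qu'$. Integrating $-\int_{\mathbb{R}}(u^{[1]})'\,\overline{u}\,dx$ by parts — the boundary terms drop by Step~1 — produces $\int_{\mathbb{R}}u^{[1]}\,\overline{u'}\,dx=\|u'\|_{L^{2}(\mathbb{R})}^{2}-\int_{\mathbb{R}}q\,u\,\overline{u'}\,dx$, so that
\[
(\dot{\mathrm{S}}_{0}(q)u,u)=\|u'\|_{L^{2}(\mathbb{R})}^{2}-\int_{\mathbb{R}}q\bigl(u\,\overline{u'}+\overline{u}\,u'\bigr)\,dx=\|u'\|_{L^{2}(\mathbb{R})}^{2}-\int_{\mathbb{R}}q(x)\,\frac{d}{dx}|u(x)|^{2}\,dx.
\]
Since $|u|^{2}\in\mathrm{AC}_{loc}(\mathbb{R})$ is continuous and compactly supported while $q\in\mathrm{BV}_{loc}(\mathbb{R})$, the Riemann--Stieltjes integration-by-parts formula yields $\int_{\mathbb{R}}q\,d(|u|^{2})=-\int_{\mathbb{R}}|u|^{2}\,dq$, again with no boundary contribution, which is the claimed identity.

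\emph{Step 3 (estimate).} Finally, applying Corollary~\ref{cr_10LmPRtaZ14} with $h=1/C$ — admissible because $C\geq 2$ forces $1/C\in(0,1]$ — gives $\int_{\mathbb{R}}|u|^{2}\,dq\geq -C\bigl(2C\|u\|_{L^{2}(\mathbb{R})}^{2}+C^{-1}\|u'\|_{L^{2}(\mathbb{R})}^{2}\bigr)=-\|u'\|_{L^{2}(\mathbb{R})}^{2}-2C^{2}\|u\|_{L^{2}(\mathbb{R})}^{2}$, and substituting into the identity of Step~2 the two $\|u'\|_{L^{2}(\mathbb{R})}^{2}$ terms cancel, leaving $(\dot{\mathrm{S}}_{0}(q)u,u)\geq -2C^{2}\|u\|_{L^{2}(\mathbb{R})}^{2}$. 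The only genuinely delicate point is the rigorous justification of the two integrations by parts in Step~2 for the merely $\mathrm{BV}_{loc}$ datum $q$; everything else is routine, and one could equally sum Corollary~\ref{cor_10} over a partition of $\mathbb{R}$ into unit intervals in place of Step~3.
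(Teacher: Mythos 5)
Your proposal is correct and follows essentially the same route as the paper: both rest on the identity $(\dot{\mathrm{S}}_{0}(q)u,u)=\lVert u'\rVert_{L^{2}(\mathbb{R})}^{2}+\int_{\mathbb{R}}|u|^{2}\,dq$ (which the paper simply asserts and you derive in detail) followed by the lower bound for the potential term with the choice $h=C^{-1}$. The only cosmetic difference is that you invoke Corollary~\ref{cr_10LmPRtaZ14} globally whereas the paper applies Lemma~\ref{lm_PRtaZ14} on unit intervals and sums, which is exactly how that corollary is proved.
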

\begin{proof}
	For arbitrary $u\in \mathrm{Dom}(\dot{\mathrm{S}}_{0}(q))$ there is a positive integer $N$ such that  $\mathrm{supp}\,u\subseteq [-N,N]$ (recall that 
	$\mathrm{Dom}(\dot{\mathrm{S}}_{0}(q))\subset H_{comp}^{2}(\mathbb{R})$, see property~$6^{0}$ of Proposition in Appendix). Therefore 
	\begin{align}\label{eq_PRtaZ28}
	(\dot{\mathrm{S}}_{0}(q)u,u)_{L^{2}(\mathbb{R})} & =(l_{q}[u],u)_{L^{2}(\mathbb{R})}=\|u'\|_{L^{2}(\mathbb{R})}^{2}+\int_{\mathbb{R}}|u(x)|^{2}d\,q(x) \\
	& =\|u'\|_{L^{2}(\mathbb{R})}^{2}+\sum_{n=-N}^{N}\int_{[n,n+1)}|u(x)|^{2}d\,q(x). \notag
	\end{align}
	To estimate terms $\int_{[n,n+1)}|u(x)|^{2}d\,q(x)$ we apply Lemma~\ref{lm_PRtaZ14} with $l=n=1$ and $h=C^{-1}$ (recall that $C\geq 2$) and get 
	\begin{equation}\label{eq_PRtaZ30}
	\int_{[n,n+1)}|u(x)|^{2}d\,q(x)\geq -2 C^{2}\|u\|_{L^{2}([n,n+1))}^{2}-\|u'\|_{L^{2}([n,n+1))}^{2}.
	\end{equation}
	
	Substituting the estimate \eqref{eq_PRtaZ30} into \eqref{eq_PRtaZ28} we receive the estimate we require:
	\begin{equation*}\label{eq_PRtaZ32}
	\begin{split}
	(\dot{\mathrm{S}}_{0}(q)u,u)_{L^{2}(\mathbb{R})}\geq  \|u'\|_{L^{2}(\mathbb{R})}^{2}
	+\sum_{n=-N}^{N}\left(-2 C^{2}\|u\|_{L^{2}([n,n+1))}^{2}-\|u'\|_{L^{2}([n,n+1))}^{2}\right)=\\
	=-2 C^{2}\|u\|_{L^{2}(\mathbb{R})}^{2}.
	\end{split}
	\end{equation*}
	
	Theorem is proved.
\end{proof}

If the preminimal operator $\dot{\mathrm{S}}_{0}(q)$ is bounded below, then the minimal operator $\mathrm{S}_{0}(q)$ is selfadjoint and coincides with the maximal operator $\mathrm{S}(q)$ (see \cite[Remark~III.2]{AlKsMl2010} and \cite[Corollary 2]{MkMlMFAT2013n2}). 
Therefore Theorem~\ref{th_PRtaZ10} implies Theorem~\ref{thM_VtaOR_B}.

Theorem~\ref{thM_VtaOR_B} is proved. 

\section{Auxiliary results}
We shall make use of a set of functions $\varphi(x)$ with compact supports and uniformly bounded derivatives. We define
$\varphi$ as follows:
\begin{equation}\label{eq_33}
\begin{array}{cl}
\text{(i)} & \varphi(x)=\varphi(x,r,R)=
\begin{cases}
1 & \text{for}\quad -r\leq x\leq R, \\
0 & \text{for}\quad x<-r-1\;\text{and}\; x>R+1.
\end{cases}
\\
\text{(ii)}& \text{For every}\; x \text{ the function } \varphi(x)\;\text{is increasing in}\;r\;\text{and}\;R. \\
\text{(iii)}&\text{The derivatives}\;\varphi'(x)\;\text{and}\;\varphi''(x)\;\text{are continuous and uniformly bounded}\\ &\text{in}\;x, r\;\text{and}\;R.
\end{array}
\end{equation}

It follows from this definition that $0\leq \varphi\leq 1$ and that $\varphi\rightarrow 1$ as $\min(r,R)\rightarrow\infty$.

\begin{lemmaEng}\label{l_20}
Let $\omega:\mathbb{R}\rightarrow\mathbb{R}$ be a bounded, twice continuously differentiable function with bounded first and second derivatives. If
\begin{equation}\label{eq_34}
	\int_{J}\omega(x)d\,q(x)\geq -C
\end{equation}
for all intervals $J$ of length $\leq 1$, then
\begin{equation*}
	\int_{\mathbb{R}}\omega^{2}|u'|^{2}d\,x< \infty
\end{equation*}
for all $u\in \mathrm{Dom}(\mathrm{S}(q))$.
\end{lemmaEng}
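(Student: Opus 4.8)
The plan is to pair the equation $l_{q}[u]=\mathrm{S}(q)u=:f\in L^{2}(\mathbb{R})$ with a compactly supported truncation of $\omega^{2}u$ and then let the truncation disappear. Fix $u\in\mathrm{Dom}(\mathrm{S}(q))$, take $\varphi=\varphi(\cdot,r,R)$ from \eqref{eq_33}, and set $g:=\omega^{2}\varphi^{2}$; this is a $C^{2}$ function with compact support whose derivatives $g,g',g''$ are bounded \emph{uniformly in} $r,R$, since $\omega,\omega',\omega''$ are bounded and $\varphi,\varphi',\varphi''$ are uniformly bounded. Because $u,u^{[1]}\in\mathrm{AC}_{loc}(\mathbb{R})$ we have $gu,\,(gu)^{[1]}=g'u+gu^{[1]}\in\mathrm{AC}_{loc}(\mathbb{R})$ with $gu$ compactly supported, so the Lagrange identity $\int_{\mathbb{R}}f\,\overline{gu}\,dx=\int_{\mathbb{R}}u'\,\overline{(gu)'}\,dx+\int_{\mathbb{R}}g|u|^{2}\,dq$ holds with no boundary terms; expanding $\overline{(gu)'}=g'\bar u+g\bar u'$ and taking real parts we obtain for the finite quantity $F(r,R):=\int_{\mathbb{R}}\omega^{2}\varphi^{2}|u'|^{2}\,dx$ (finite since $u'\in L^{2}_{loc}(\mathbb{R})$ and $g$ is bounded with compact support)
\begin{equation*}
	F(r,R)=\operatorname{Re}\!\int_{\mathbb{R}}f\,\overline{gu}\,dx-\operatorname{Re}\!\int_{\mathbb{R}}g'u'\bar u\,dx-\int_{\mathbb{R}}g|u|^{2}\,dq,
\end{equation*}
where $\operatorname{Re}\int_{\mathbb{R}}g'u'\bar u\,dx=\tfrac12\int_{\mathbb{R}}g'(|u|^{2})'\,dx=-\tfrac12\int_{\mathbb{R}}g''|u|^{2}\,dx$ after one further integration by parts.

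The first two terms on the right are controlled uniformly in $r,R$: $\bigl|\operatorname{Re}\int_{\mathbb{R}}f\,\overline{gu}\,dx\bigr|\le\|\omega\|_{\infty}^{2}\|f\|_{L^{2}}\|u\|_{L^{2}}$ and $\bigl|\operatorname{Re}\int_{\mathbb{R}}g'u'\bar u\,dx\bigr|\le\tfrac12\|g''\|_{\infty}\|u\|_{L^{2}}^{2}$. The whole difficulty is the potential term $-\int_{\mathbb{R}}\omega^{2}\varphi^{2}|u|^{2}\,dq$, and here \eqref{eq_34} enters. I would rewrite it as $-\int_{\mathbb{R}}\phi\,\omega\,dq$ with $\phi:=\omega\varphi^{2}|u|^{2}$ and note that, by \eqref{eq_34}, the locally finite signed measure $\omega\,dq$ satisfies Brinck's condition: $\int_{K}\omega\,dq\ge-C$ for every interval $K$ of length $\le1$. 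Assuming $\omega\ge0$ — which is the case in all intended applications, where $\omega$ is built from the cut-offs $\varphi$ — the function $\phi$ is non-negative; splitting $\mathbb{R}$ into intervals of unit length and applying Ganelius' inequality (Lemma~\ref{lm_PRtaZ10}) to $\phi$ against $-\omega\,dq$ on each of them, and using that the infimum of $\phi$ over a unit interval is at most its average there, one gets
\begin{equation*}
	-\int_{\mathbb{R}}\omega^{2}\varphi^{2}|u|^{2}\,dq\le C\Bigl(\|\phi\|_{L^{1}(\mathbb{R})}+\mathop\mathrm{var}\limits_{\mathbb{R}}\phi\Bigr)\le C\Bigl(\|\omega\|_{\infty}\|u\|_{L^{2}}^{2}+\mathop\mathrm{var}\limits_{\mathbb{R}}\phi\Bigr).
\end{equation*}

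It remains to bound $\mathop\mathrm{var}_{\mathbb{R}}\phi=\int_{\mathbb{R}}|\phi'|\,dx$. From $\phi'=(\omega\varphi^{2})'|u|^{2}+2\omega\varphi^{2}\operatorname{Re}(u'\bar u)$ and Cauchy's inequality with a parameter $\varepsilon>0$ one gets $|\phi'|\le\bigl(\|(\omega\varphi^{2})'\|_{\infty}+\varepsilon^{-1}\bigr)|u|^{2}+\varepsilon\,\omega^{2}\varphi^{2}|u'|^{2}$, hence $\mathop\mathrm{var}_{\mathbb{R}}\phi\le\mathrm{const}(\varepsilon)\|u\|_{L^{2}}^{2}+\varepsilon F(r,R)$ with $\mathrm{const}(\varepsilon)$ independent of $r,R$. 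Combining the three estimates gives $F(r,R)\le K(\varepsilon)+C\varepsilon F(r,R)$ with $K(\varepsilon)$ independent of $r,R$; choosing $\varepsilon=(2C)^{-1}$ and absorbing the last term yields $F(r,R)\le2K\bigl((2C)^{-1}\bigr)$ for all $r,R$. Finally $\varphi(\cdot,r,R)\uparrow1$ as $\min(r,R)\to\infty$, so $\omega^{2}\varphi^{2}|u'|^{2}\uparrow\omega^{2}|u'|^{2}$, and the monotone convergence theorem gives $\int_{\mathbb{R}}\omega^{2}|u'|^{2}\,dx=\lim_{r,R\to\infty}F(r,R)<\infty$.

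The main obstacle is the potential term: it cannot be bounded below through a direct use of $(\mathrm{Br})$ — which is not assumed in this lemma — so one must transfer the weight onto the measure and use \eqref{eq_34} for $\omega\,dq$; this costs a factor $\omega$, which is precisely why the correct weight in the test function is $\omega^{2}$ and not $\omega$: it makes the cross term $2\omega\varphi^{2}|u'||u|$ in $\mathop\mathrm{var}\phi$ absorbable into the left-hand side $\int_{\mathbb{R}}\omega^{2}\varphi^{2}|u'|^{2}$. The remaining steps — that $\omega\,dq$ inherits the Brinck bound from \eqref{eq_34}, and that every constant is independent of the truncation parameters so that the passage $\varphi\to1$ is legitimate — are routine bookkeeping.
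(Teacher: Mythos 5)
Your proof is correct and follows essentially the same route as the paper's: test against $\omega^{2}\varphi^{2}\overline{u}$, integrate by parts, transfer the weight onto the measure $d\,W=\omega\,d\,q$ so that Ganelius' inequality (Lemma~\ref{lm_PRtaZ10}) applies, bound the variation term, and absorb the cross term $2\omega\varphi^{2}|u||u'|$ into $\int_{\mathbb{R}}\omega^{2}\varphi^{2}|u'|^{2}d\,x$ before passing to the limit $\varphi\rightarrow 1$ by monotone convergence. The only (harmless) deviations are that you treat complex $u$ directly by taking real parts where the paper first restricts to real $u$ and then splits $u=u_{1}+iu_{2}$, and that you state explicitly the positivity $\omega\geq 0$ needed for Ganelius' lemma, which the paper's proof uses implicitly.
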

\begin{proof}
Let $\varphi$ be one of the functions introduced above and put 	$\psi=\varphi^{2}\omega^{2}$. If $u$ is any function in $\mathrm{Dom}(\mathrm{S}(q))$ we get, integrating by parts,
\begin{equation}\label{eq_36}
	\int_{\mathbb{R}}\psi l_{q}[u]\overline{u}d\,x=\int_{\mathbb{R}}\psi'u'\overline{u}d\,x+
	\int_{\mathbb{R}}\psi|u'|^{2}d\,x+\int_{\mathbb{R}}\psi|u|^{2}d\,q.
\end{equation}
Now, let $u$ be a real function in $\mathrm{Dom}(\mathrm{S}(q))$. Then the first integral on the right can be integrated by parts, yielding
\begin{equation}\label{eq_38}
	\int_{\mathbb{R}}\psi l_{q}[u]\overline{u}d\,x=-\frac{1}{2}\int_{\mathbb{R}}\psi''|u|^{2}d\,x+
	\int_{\mathbb{R}}\psi|u'|^{2}d\,x+\int_{\mathbb{R}}\psi|u|^{2}d\,q.
\end{equation}
The functions $\psi$ and $\psi''$ tend boundedly to $\omega^{2}$ and $(\omega^{2})''$ respectively as $\varphi\rightarrow 1$, that is as $\min(r,R)\rightarrow\infty$, and, since $|\omega^{2} l_{q}[u]\overline{u}|$ and $(\omega^{2})''|u|^{2}$ are both integrable, the first two integrals in \eqref{eq_38} tend to the finite limits $\int_{\mathbb{R}}\omega^{2} l_{q}[u]\overline{u}d\,x$ and $\int_{\mathbb{R}}(\omega^{2})''|u|^{2}d\,x$ respectively as $\varphi\rightarrow 1$. Since the convergence of $\psi$ is also monotone, we conclude that $\int_{\mathbb{R}}\psi|u'|^{2}d\,x$ must tend to $\int_{\mathbb{R}}\omega^{2}|u'|^{2}d\,x$ although this limit may not be finite, and therefore $\int_{\mathbb{R}}\psi|u|^{2}d\,q$ must also have limit (possibly $-\infty$).
	
We put $d\,W(x)=\omega(x)d\,q(x)$. It follows from \eqref{eq_34} that $W$ satisfies a condition of the type $(\mathrm{Br})$. 
Therefore, we apply Lemma~\ref{lm_PRtaZ10} (as in the proof of Lemma~\ref{lm_PRtaZ14}) to obtain
\begin{equation*}
	-\int_{\mathbb{R}}\psi|u|^{2}d\,q =-\int_{\mathbb{R}}\omega(x)\varphi^{2}(x)|u(x)|^{2}d\,W(x)
	\leq C\left(2\int_{\mathbb{R}}\varphi^{2}\omega|u|^{2}d\,x
	+\mathrm{var}\varphi^{2}\omega|u|^{2}\right).
\end{equation*}
But $\mathrm{var}\varphi^{2}\omega|u|^{2}$ is bounded by
\begin{equation*}
	\int_{\mathbb{R}}\varphi^{2}|\omega'||u|^{2}d\,x+2\int_{\mathbb{R}}\varphi\omega|\varphi'||u|^{2}d\,x+
	2\int_{\mathbb{R}}\omega\varphi^{2}|uu'|d\,x,
\end{equation*}
which in turn is majorized by
\begin{equation*}
	M\|u\|^{2}+2\|u\|\|\varphi\omega u'\|,
\end{equation*}
where the coefficient $M$ depends only on the bounds for $\omega$,	$\omega'$, and $\varphi'$. 
Hence, it follows from \eqref{eq_38} that 
\begin{equation*}
	\|\varphi\omega u'\|^{2}\leq O(1)+2\|u\|\|\varphi\omega u'\|.
\end{equation*}
Thus, $\|\varphi\omega u'\|^{2}=\int_{\mathbb{R}}\varphi^{2}\omega^{2} |u'|^{2}d\,x=\int_{\mathbb{R}}\psi|u'|^{2}d\,x$ must be bounded. Therefore
\begin{equation}\label{eq_40}
	\int_{\mathbb{R}}\omega^{2}|u'|^{2}d\,x<\infty,
\end{equation}
and the lemma is proved for every real $u\in \mathrm{Dom}(\mathrm{S}(q))$.
	
Since every $u$ in $\mathrm{Dom}(\mathrm{S}(q))$ may be written $u_{1}+iu_{2}$, where $u_{1}$ and $u_{2}$ are real and from $\mathrm{Dom}(\mathrm{S}(q))$, the proof for real $u$ shows that $\int_{\mathbb{R}}\omega^{2}|u_{1}'|^{2}d\,x<\infty$ and $\int_{\mathbb{R}}\omega^{2}|u_{2}'|^{2}d\,x<\infty$. Hence, $\int_{\mathbb{R}}\omega^{2}|u'|^{2}d\,x<\infty$ for all $u\in \mathrm{Dom}(\mathrm{S}(q))$. 
The proof of the lemma is complete.
\end{proof}

We observe that $\int_{\mathbb{R}}\psi|u|^{2}d\,q$ has a finite limit for all $u$ in $\mathrm{Dom}(\mathrm{S}(q))$, and that $|u'\overline{u}|$ is integrable. 
Hence $\int_{\mathbb{R}}\psi'u'\overline{u}d\,x$ in \eqref{eq_36} tends
to $\int_{\mathbb{R}}(\omega^{2})'u'\overline{u}d\,x$ for all $u\in \mathrm{Dom}(\mathrm{S}(q))$.

We obtain the following useful result from Lemma \ref{l_20} with $\omega(x)\equiv 1$.
\begin{corollaryLmEng}\label{cr_dom}
Let the condition $(\mathrm{Br})$ be satisfied. Then 
\begin{equation*}
 \mathrm{Dom}(\mathrm{S}(q))\subset H^{1}(\mathbb{R}).
\end{equation*}
\end{corollaryLmEng}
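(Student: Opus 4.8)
The plan is to obtain Corollary~\ref{cr_dom} as an immediate consequence of Lemma~\ref{l_20} applied to the constant function $\omega(x)\equiv 1$. First I would verify that this choice meets the hypotheses of the lemma: $\omega$ is bounded (by $1$), it is twice (indeed infinitely) continuously differentiable, and $\omega'\equiv\omega''\equiv 0$ are bounded; moreover the required inequality \eqref{eq_34}, namely $\int_{J}\omega\,dq=\int_{J}dq\geq -C$ for every interval $J$ of length $\leq 1$, is literally the Brinck condition $(\mathrm{Br})$, which we are assuming. Hence Lemma~\ref{l_20} applies and gives $\int_{\mathbb{R}}|u'(x)|^{2}\,dx=\int_{\mathbb{R}}\omega^{2}(x)|u'(x)|^{2}\,dx<\infty$ for every $u\in\mathrm{Dom}(\mathrm{S}(q))$.

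It then remains to combine this bound with the definition of the maximal operator. Any $u\in\mathrm{Dom}(\mathrm{S}(q))$ belongs to $L^{2}(\mathbb{R})$ and satisfies $u,u^{[1]}=u'-qu\in\mathrm{AC}_{loc}(\mathbb{R})$; since $q\in L^{2}_{loc}(\mathbb{R})$ and $u$ is continuous, hence locally bounded, the product $qu$ lies in $L^{2}_{loc}(\mathbb{R})$, and therefore $u'=u^{[1]}+qu\in L^{2}_{loc}(\mathbb{R})$. In particular $u\in H^{1}_{loc}(\mathbb{R})$ and the derivative appearing in the estimate above is the distributional one. Together with $u\in L^{2}(\mathbb{R})$, the global bound $u'\in L^{2}(\mathbb{R})$ yields $u\in H^{1}(\mathbb{R})$, which is exactly the assertion of the corollary.

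Since the whole substantive content is already contained in Lemma~\ref{l_20}, there is no real obstacle here: the only point that deserves a word is the trivial observation that for $\omega\equiv 1$ the auxiliary hypothesis \eqref{eq_34} coincides with $(\mathrm{Br})$, so nothing beyond invoking the lemma and recalling the structure of $\mathrm{Dom}(\mathrm{S}(q))$ is needed.
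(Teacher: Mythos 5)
Your proposal is correct and follows exactly the paper's own route: the paper derives Corollary~\ref{cr_dom} precisely by applying Lemma~\ref{l_20} with $\omega(x)\equiv 1$, for which the hypothesis \eqref{eq_34} reduces to $(\mathrm{Br})$. Your additional remark that $u'\in L^{2}_{loc}(\mathbb{R})$ (so that the finiteness of $\int_{\mathbb{R}}|u'|^{2}dx$ indeed gives $u\in H^{1}(\mathbb{R})$) is a harmless elaboration of property $6^{0}$ of the Proposition in the Appendix, not a different argument.
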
	

We see from \eqref{eq_36}, \eqref{eq_38} and \eqref{eq_40} with $\omega(x)\equiv 1$ that $\|u'\|^{2}$ is finite and that
\begin{equation*}
\lim_{\varphi\rightarrow 1}\int_{\mathbb{R}}\varphi^{2}|u|^{2}d\,q(x)\; \text{exists}
\end{equation*}
and also that
\begin{equation*}
(\mathrm{S}(q) u,u)_{L^{2}(\mathbb{R})}=\int_{\mathbb{R}}l_{q}[u]\overline{u}d\,x=\int_{\mathbb{R}}|u|^{2}d\,x+\lim_{\varphi\rightarrow 1}\int_{\mathbb{R}}\varphi^{2}|u|^{2}d\,q(x).
\end{equation*}
This enables us to prove that the ''potential energy''
\begin{equation}\label{eq_42}
\mathrm{Q}(u)=\lim_{M,N\rightarrow\infty}\int_{-M}^{N}|u(x)|^{2}d\,q(x)
\end{equation}
exists and is finite for every $u\in \mathrm{Dom}(\mathrm{S}(q))$ as improper Riemann--Stieltjes integral.

Let $\varphi_{1}=\varphi^{2}(x,r,R)$ and $\varphi_{2}=\varphi^{2}(x,r-1,R-1)$, 
with $\varphi$ being defined by \eqref{eq_33}.
Then obviously
\begin{equation*}
\int_{-r}^{R}|u|^{2}d\,q=\int_{\mathbb{R}}\varphi_{1}|u|^{2}d\,q
-\int_{-r-1}^{-r}\varphi_{1}|u|^{2}d\,q-\int_{R}^{R+1}\varphi_{1}|u|^{2}d\,q
\end{equation*}
and
\begin{equation*}
\int_{-r}^{R}|u|^{2}d\,q=\int_{\mathbb{R}}\varphi_{2}|u|^{2}d\,q
+\int_{-r}^{-r+1}(1-\varphi_{2})|u|^{2}d\,q+\int_{R-1}^{R}(1-\varphi_{2})|u|^{2}d\,q.
\end{equation*}
In these two identities four integrals over intervals of unit length can each be one-sidedly estimated 
by the norms of $u$ and $u'$ over the interval by Lemma~\ref{lm_PRtaZ14}. 
Since $u$ and $u'$ are both from $L^{2}(\mathbb{R})$, those norms vanish with increasing
$r$ and $R$. Thus
\begin{equation*}
\int_{\mathbb{R}}\varphi_{2}|u|^{2}d\,q-o(1)\leq \int_{-r}^{R}|u|^{2}d\,q\leq
\int_{\mathbb{R}}\varphi_{1}|u|^{2}d\,q+o(1),
\end{equation*}
and, hence, 
\begin{equation*}
\int_{-r}^{R}|u|^{2}d\,q\rightarrow \lim_{\varphi\rightarrow 1}
\int_{\mathbb{R}}\varphi|u|^{2}d\,q
\end{equation*}
as $\min(r,R)\rightarrow\infty$. Thus, the limit in \eqref{eq_42} exists. It also follows that
\begin{equation}\label{eq_44}
\mathrm{Q}(u)=\int_{\mathbb{R}}|u|^{2}d\,q=\left(\mathrm{S}(q)u,u\right)_{L^{2}(\mathbb{R})}-(u',u')_{L^{2}(\mathbb{R})}
\end{equation}
for all $u\in \mathrm{Dom}(\mathrm{S}(q))$, which is equivalent to
\begin{equation*}
\left(\mathrm{S}(q) u,u\right)_{L^{2}(\mathbb{R})}=\int_{\mathbb{R}}|u'|^{2}d\,x+\int_{\mathbb{R}}|u|^{2}d\,q(x).
\end{equation*}
We have just proved the first half of the following
\begin{theoremEng}\label{th_16}
If $q'(x)$ satisfies $(\mathrm{Br})$, then the potential energy $\mathrm{Q}(u)$ defined by \eqref{eq_42} 
exists and is finite for any $u\in \mathrm{Dom}(\mathrm{S}(q))$ as improper Riemann--Stieltjes integral. 
Moreover, for any $h\in (0,C^{-1}]$ 
and every $u\in\mathrm{Dom}(\mathrm{S}(q))$, we have
\begin{equation}\label{eq_46}
	(1-C h)(u',u')_{L^{2}(\mathbb{R})}\leq 2C h^{-1}(u,u)_{L^{2}(\mathbb{R})}+\left(\mathrm{S}(q) u,u\right)_{L^{2}(\mathbb{R})}
\end{equation}
and
\begin{equation}\label{eq_48}
	(1-C h)\mathrm{Q}(u)\geq -2C h^{-1}(u,u)_{L^{2}(\mathbb{R})}-C h\left(\mathrm{S}(q) u,u\right)_{L^{2}(\mathbb{R})}.
\end{equation}
\end{theoremEng}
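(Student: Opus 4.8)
The plan is to derive both quantitative inequalities from the single Stieltjes estimate of Corollary~\ref{cr_10LmPRtaZ14} (in the form of its Remark) together with the identity~\eqref{eq_44}, both of which have already been established for every $u\in\mathrm{Dom}(\mathrm{S}(q))$ in the discussion preceding the theorem. Since the existence and finiteness of $\mathrm{Q}(u)$ as an improper Riemann--Stieltjes integral is exactly the ``first half'' already proved, only the two bounds \eqref{eq_46} and \eqref{eq_48} remain.

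First I would fix $u\in\mathrm{Dom}(\mathrm{S}(q))$ and $h\in(0,C^{-1}]$. By Corollary~\ref{cr_dom} we have $u\in H^{1}(\mathbb{R})$, and since $C\geq 2$ we have $h\leq 1/2\leq 1$, so $h$ lies in the admissible range of Corollary~\ref{cr_10LmPRtaZ14}. Because $\mathrm{Q}(u)$ exists as an improper Riemann--Stieltjes integral, the Remark following Corollary~\ref{cr_10LmPRtaZ14} applies and gives
\begin{equation*}
\mathrm{Q}(u)=\int_{\mathbb{R}}|u(x)|^{2}d\,q(x)\geq -2Ch^{-1}(u,u)_{L^{2}(\mathbb{R})}-Ch(u',u')_{L^{2}(\mathbb{R})}.
\end{equation*}
To obtain \eqref{eq_46} I would substitute the identity $\mathrm{Q}(u)=(\mathrm{S}(q)u,u)_{L^{2}(\mathbb{R})}-(u',u')_{L^{2}(\mathbb{R})}$ from \eqref{eq_44} into the left-hand side of this estimate and collect the $(u',u')_{L^{2}(\mathbb{R})}$ terms, which yields $(\mathrm{S}(q)u,u)_{L^{2}(\mathbb{R})}\geq(1-Ch)(u',u')_{L^{2}(\mathbb{R})}-2Ch^{-1}(u,u)_{L^{2}(\mathbb{R})}$, i.e.\ exactly \eqref{eq_46}. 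For \eqref{eq_48} I would instead substitute $(u',u')_{L^{2}(\mathbb{R})}=(\mathrm{S}(q)u,u)_{L^{2}(\mathbb{R})}-\mathrm{Q}(u)$ into the $Ch(u',u')_{L^{2}(\mathbb{R})}$ term on the right-hand side of the same estimate and then move the resulting $Ch\,\mathrm{Q}(u)$ to the left, obtaining $(1-Ch)\mathrm{Q}(u)\geq -2Ch^{-1}(u,u)_{L^{2}(\mathbb{R})}-Ch(\mathrm{S}(q)u,u)_{L^{2}(\mathbb{R})}$, i.e.\ \eqref{eq_48}. Note $1-Ch\geq 0$ by the choice of $h$, so the inequalities are consistent and no division is needed.

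I do not expect a genuine obstacle at this stage: all the analytic content — the local bound of Lemma~\ref{lm_PRtaZ14} and Corollary~\ref{cr_10LmPRtaZ14}, the passage to non-compactly supported $u$ via the cut-offs $\varphi$, and the identity \eqref{eq_44} — has already been carried out. The only point that requires care is the legitimate invocation of the Remark after Corollary~\ref{cr_10LmPRtaZ14}, namely that the improper integral $\mathrm{Q}(u)$ really exists (supplied by the first half of the theorem) and that $u\in H^{1}(\mathbb{R})$ (supplied by Corollary~\ref{cr_dom}); after that the argument is purely algebraic rearrangement.
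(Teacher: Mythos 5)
Your proposal is correct and follows essentially the same route as the paper: the authors likewise invoke Corollary~\ref{cr_10LmPRtaZ14} together with its Remark to get $\mathrm{Q}(u)\geq -2Ch^{-1}\lVert u\rVert^{2}_{L^{2}(\mathbb{R})}-Ch\lVert u'\rVert^{2}_{L^{2}(\mathbb{R})}$ and then state that \eqref{eq_46} and \eqref{eq_48} follow from \eqref{eq_44}. You have merely written out the algebraic rearrangements that the paper leaves implicit, which is a faithful and complete rendering of the same argument.
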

\begin{proof}
For all $h\leq C^{-1}$ $(<1)$ and every $u\in\mathrm{Dom}(\mathrm{S}(q))$ we have 
\begin{equation*}
	\mathrm{Q}(u)=\int_{\mathbb{R}}|u|^{2}d\,q(x)\geq -2Ch^{-1}\|u\|_{L^{2}(\mathbb{R})}^{2}-Ch\|u'\|_{L^{2}(\mathbb{R})}^{2}
\end{equation*}
due to Corollary \ref{cr_10LmPRtaZ14} and the remark to this Corollary. 
Then \eqref{eq_46} and \eqref{eq_48} follow from \eqref{eq_44}.
\end{proof}


\section{Proof of Theorem \ref{thM_VtaOR_C}}
Let us first prove some preliminary results.

If $q'(x)$ satisfies an upper estimate of a type corresponding to $(\mathrm{Br})$, that is
\begin{equation}\label{eq_26}
\int_{J}d\,q(x)\leq C_{1}
\end{equation}
for all intervals $J$ of length $\leq 1$, then $-q'(x)$ satisfies $(\mathrm{Br})$ with $C$ replaced by $C_{1}$. 
Hence, Lemma~\ref{lm_PRtaZ14} and Corollary~\ref{cr_10LmPRtaZ14} give upper bounds for
$\int|u|^{2}d\,q$. For convenience we state them in a separate statement.
\begin{propositionEng}\label{pr_12}
Let $q'(x)$ satisfy \eqref{eq_26}. If $I$ is any finite interval of length $l$ and $f\in H_{2}^{1}(I)$, then 
\begin{equation*}
	\int_{I}|u(x)|^{2}d\,q(x)\leq C_{1}\left\{2(hl/n)^{-1}\parallel u\parallel_{L^{2}(I)}^{2}+(hl/n)\parallel
	f'\parallel_{L^{2}(I)}^{2}\right\},
\end{equation*}
where $n$ is the integer determined by $n-1<l\leq n$ and $h$ is any number in the interval $0<h\leq 1$.
	
If $u$ belongs to $H^{1}(\mathbb{R})$ and has compact support, then	also
\begin{equation*}
	\int_{\mathbb{R}}|u(x)|^{2}d\,q(x)\leq C_{1}\left\{2h^{-1}
	\parallel u\parallel_{L^{2}(\mathbb{R})}^{2}+h\parallel u'\parallel_{L^{2}(\mathbb{R})}^{2}\right\}
\end{equation*}
for any positive $h\leq 1$.
\end{propositionEng}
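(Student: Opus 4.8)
The plan is to deduce the proposition from Lemma~\ref{lm_PRtaZ14} and Corollary~\ref{cr_10LmPRtaZ14} by the elementary device of replacing $q$ with $-q$. First I would observe that condition \eqref{eq_26} is precisely the assertion that $-q$ satisfies the Brinck condition $(\mathrm{Br})$ with constant $C_{1}$: for every interval $J$ with $|J|\le 1$ one has $\int_{J} d(-q)(x) = -\int_{J} d\,q(x) \ge -C_{1}$. Since $-q$ again belongs to $\mathrm{BV}_{loc}(\mathbb{R})$, every conclusion proved above under the hypothesis $(\mathrm{Br})$ applies verbatim to $-q$ in place of $q$, with $C$ replaced by $C_{1}$.

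Next, fixing a finite interval $I$ of length $l$ and $f\in H^{1}(I)$, I would apply Lemma~\ref{lm_PRtaZ14} to the function $-q$ and to $f$, obtaining
\begin{equation*}
\int_{I}|f(x)|^{2}\,d(-q)(x)\ \ge\ -C_{1}\Bigl(2(hl/n)^{-1}\|f\|_{L^{2}(I)}^{2}+(hl/n)\|f'\|_{L^{2}(I)}^{2}\Bigr),
\end{equation*}
where $n$ is the integer with $n-1<l\le n$ and $h\in(0,1]$. Multiplying through by $-1$ reverses the inequality and gives exactly the first displayed bound of the proposition. For the second statement, when $u\in H^{1}(\mathbb{R})$ has compact support, I would argue in the same manner from Corollary~\ref{cr_10LmPRtaZ14} applied to $-q$: partition $\mathbb{R}$ into disjoint unit intervals, use the interval estimate on each, and sum, then change the sign.

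There is no real obstacle here, since the proposition is a formal corollary of results already established, obtained purely by a sign change; the only care required is bookkeeping, namely tracking that the constant is now $C_{1}$, that the integer $n$ is unaffected (it depends only on $l$), and that the admissible range $h\in(0,1]$ is unchanged. In transcribing the statement one should read $H_{2}^{1}(I)$ as $H^{1}(I)$ and regard the mismatch between $u$ and $f$ in the first displayed formula as a typographical slip, replacing $f$ by $u$ throughout.
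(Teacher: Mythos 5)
Your proposal is correct and coincides with the paper's own argument: the authors likewise observe that \eqref{eq_26} means $-q$ satisfies $(\mathrm{Br})$ with constant $C_{1}$ and then read off the two bounds from Lemma~\ref{lm_PRtaZ14} and Corollary~\ref{cr_10LmPRtaZ14} by a sign change. Your reading of $H_{2}^{1}(I)$ as $H^{1}(I)$ and of the $u$/$f$ mismatch as typographical is also the intended one.
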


\begin{lemmaEng}\label{lm_DOR12}
Assume that $I$ is an interval of length $\leq 1$, $q'(x)$ satisfies $(\mathrm{Br})$ and 
\begin{equation*}
	\int_{I}|h(x)|^{2}d\,q(x)\leq C_{1}
\end{equation*}
for some function $h\in H^{1}(I)$ such that
	$0<m\leq |h(x)|\leq M$
for all $x\in I$. Then
\begin{equation}\label{eq_DORLm10}
	\int_{I}d\,q(x)\leq C_{0},
\end{equation}
where $C_{0}$ depends only on $C$, $C_1$, $m$, $M$, and $\lVert h'\rVert_{L^{2}(I)}^{2}$.
\end{lemmaEng}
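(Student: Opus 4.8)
The plan is to reduce the control of the \emph{signed} quantity $\int_I d\,q$ to the given bound on $\int_I|h|^2\,d\,q$. Since $d\,q$ is only a signed measure, one cannot simply divide the hypothesis by $m^2$; instead I would split off the constant part $m^2$ of $|h(x)|^2$ and handle the nonnegative remainder by Ganelius' inequality (Lemma~\ref{lm_PRtaZ10}). Concretely, writing $|h(x)|^2=m^2+\bigl(|h(x)|^2-m^2\bigr)$ with $|h(x)|^2-m^2\ge 0$, one gets
\begin{equation*}
m^2\int_I d\,q=\int_I|h|^2\,d\,q-\int_I\bigl(|h|^2-m^2\bigr)d\,q\le C_1-\int_I\bigl(|h|^2-m^2\bigr)d\,q,
\end{equation*}
so it remains to bound $-\int_I\bigl(|h|^2-m^2\bigr)d\,q$ from above.

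For that I would apply Lemma~\ref{lm_PRtaZ10} with $f:=|h|^2-m^2\ge 0$ and $g:=-q$ on (the closure of) $I$. Here $f$ has bounded variation, because $h\in H^1(I)$ with $|h|\le M$ gives $|h|^2\in W^{1,1}(I)$; moreover $\inf_I f\le M^2-m^2$, and, exactly as in the proof of Lemma~\ref{lm_PRtaZ14}, Cauchy's inequality yields
\begin{equation*}
\operatorname{var}_I f=\operatorname{var}_I|h|^2\le 2\|h\|_{L^2(I)}\|h'\|_{L^2(I)}\le 2M\|h'\|_{L^2(I)},
\end{equation*}
using $\|h\|_{L^2(I)}\le M$ since $|I|\le 1$. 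Lemma~\ref{lm_PRtaZ10} then gives
\begin{equation*}
-\int_I\bigl(|h|^2-m^2\bigr)d\,q\le\Bigl(\inf_I f+\operatorname{var}_I f\Bigr)\sup_{K\subset I}\Bigl(-\int_K d\,q\Bigr),
\end{equation*}
and, because $|I|\le 1$, the condition $(\mathrm{Br})$ forces $-\int_K d\,q\le C$ for every compact $K\subset I$. Combining these estimates gives
\begin{equation*}
m^2\int_I d\,q\le C_1+C\bigl(M^2-m^2+2M\|h'\|_{L^2(I)}\bigr),
\end{equation*}
whence \eqref{eq_DORLm10} holds with $C_0:=m^{-2}\bigl(C_1+C(M^2-m^2)+2CM\|h'\|_{L^2(I)}\bigr)$; since $\|h'\|_{L^2(I)}\le 1+\|h'\|_{L^2(I)}^2$, this constant depends only on $C$, $C_1$, $m$, $M$ and $\|h'\|_{L^2(I)}^2$, as claimed.

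The only genuine obstacle is the one just flagged: the signedness of $d\,q$ rules out a one-line argument and forces the decomposition $|h|^2=m^2+(|h|^2-m^2)$ together with a separate Ganelius estimate for the nonnegative part. Everything else is routine bookkeeping with the bounds for $\inf_I|h|^2$, $\sup_I|h|^2$ and $\operatorname{var}_I|h|^2$ already used in the proof of Lemma~\ref{lm_PRtaZ14}.
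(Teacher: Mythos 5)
Your proof is correct, but it takes a genuinely different route from the paper's. The paper applies Ganelius' inequality (Lemma~\ref{lm_PRtaZ10}) with $f=|h|^{-2}$ and $d\,g=|h|^{2}d\,q$, which forces it to (a) estimate $\operatorname{var}_{I}|h|^{-2}\leq 2m^{-4}M\|h'\|_{L^{2}(I)}$ and, more substantially, (b) produce a \emph{uniform} upper bound for $\int_{J}|h|^{2}d\,q$ over all compact subintervals $J\subset I$; the latter is done by writing $\int_{J}=\int_{I}-\int_{I\setminus J}$ and bounding $\int_{I\setminus J}|h|^{2}d\,q$ from below on the at most two components of $I\setminus J$ via Lemma~\ref{lm_PRtaZ14}, yielding the bound \eqref{eq_DORLm14}. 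You instead split $|h|^{2}=m^{2}+(|h|^{2}-m^{2})$, use the hypothesis $\int_{I}|h|^{2}d\,q\leq C_{1}$ only once and only on the whole interval, and apply Ganelius to the nonnegative remainder against $g=-q$, where $(\mathrm{Br})$ directly gives $\sup_{K\subset I}\bigl(-\int_{K}d\,q\bigr)\leq C$. This is shorter and more self-contained: it needs neither the subinterval decomposition nor Lemma~\ref{lm_PRtaZ14}, and the variation estimate is for $|h|^{2}$ rather than $|h|^{-2}$ (so the lower bound $m$ enters only at the very last division by $m^{2}$). Both arguments produce a constant of the admissible form; yours is $m^{-2}\bigl(C_{1}+C(M^{2}-m^{2})+2CM\|h'\|_{L^{2}(I)}\bigr)$ versus the paper's $\bigl(m^{-2}+2m^{-4}M\|h'\|_{L^{2}(I)}\bigr)\bigl(C_{1}+C(4M^{2}+\|h'\|_{L^{2}(I)}^{2})\bigr)$, and your closing remark correctly converts the dependence on $\|h'\|_{L^{2}(I)}$ into one on $\|h'\|_{L^{2}(I)}^{2}$. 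The one point worth making explicit is that $\sup_{K\subset I}\int_{K}d(-q)\geq 0$ (take $K$ degenerate at a continuity point of $q$), so that multiplying the two nonnegative factors and then replacing the supremum by $C$ is legitimate; but this is exactly the same implicit step the paper takes in the proof of Lemma~\ref{lm_PRtaZ14}.
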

\begin{proof}
We apply Lemma \ref{lm_PRtaZ10} with $f=|h|^{-2}$ and $d\,g=|h|^{2}d\,q$ to obtain
\begin{equation}\label{eq_DORLm11}
\int_{I}d\,q(x)=\int_{I}|h|^{-2}|h|^{2}d\,q(x)\leq \left(\inf_{I}|h|^{-2}+\mathop\mathrm{var}_{I}|h|^{-2}\right)\sup_{J\subset I} \int_{J}|h|^{2}d\,q(x),
\end{equation}
and we shall exhibit a bound for each of the factors on the right.
	
For any $J\subset I$ the set $I\setminus J$ consists of at most two intervals $K$ and $L$, of length $k$ and $l$ respectively. From Lemma~\ref{lm_PRtaZ14} with $h=1$ we find
\begin{equation*}\label{eq_DORLm12}
\int_{K}|h|^{2}d\,q(x)\geq  -C\left(2k^{-1}\|h\|_{L^{2}(K)}^{2}+k\|h'\|_{L^{2}(K)}^{2}\right).
\end{equation*}
Since $\lVert h\rVert_{L^{2}(K)}^{2}\leq kM^{2}$ and $k\leq 1$, this yields
\begin{equation*}
	\int_{K}|h|^{2}d\,q(x)\geq 	-C\left(2M^{2}+\|h'\|_{L^{2}(K)}^{2}\right).
\end{equation*}
Because a similar estimate holds for the interval $L$, we have 
\begin{equation*}
	\int_{I\setminus J}|h|^{2}d\,q(x)\geq -C\left(4M^{2}+\|h'\|_{L^{2}(I)}^{2}\right).
\end{equation*}
Hence, 
\begin{equation}\label{eq_DORLm14}
	\int_{J}|h|^{2}d\,q(x)=\int_{I}|h|^{2}d\,q(x)-\int_{I\setminus J}|h|^{2}d\,q(x)\leq
	C_{1}+C\left(4M^{2}+\|h'\|_{L^{2}(I)}^{2}\right).
\end{equation}
Thus, there exists a bound of the required type for the second factor in \eqref{eq_DORLm10}.
	
On the other hand,
\begin{equation}\label{eq_DORLm16}
\inf_{J}\lVert h\rVert^{2}\leq m^{-2},
\end{equation}
and
\begin{align}\label{eq_DORLm18}
\mathop\mathrm{var}_{I}|h|^{-2} &  =\int_{I}\left|\frac{d}{dx}|h(x)|^{-2}\right|d\,x
=\int_{I}2|h|^{-4}\left|Re(h\overline{h'})\right|d\,x \leq 2m^{-4}\|h\|_{L^{2}(I)}\|h'\|_{L^{2}(I)} \notag \\
& \leq 2m^{-4}M\|h'\|_{L^{2}(I)}.
\end{align}
So in virtue of \eqref{eq_DORLm11}, \eqref{eq_DORLm14}, \eqref{eq_DORLm16} and \eqref{eq_DORLm18} we get a desired bound for $\int_{I}d\,q(x)$.
\end{proof}

Now we are ready to prove Theorem \ref{thM_VtaOR_C}.

Let us consider the operator
\begin{equation*}
\mathrm{B}=\mathrm{S}(q)+(2C^{2}+1)\mathrm{I},
\end{equation*}
where $\mathrm{I}$ is the identity operator with the domain $\mathrm{Dom}(\mathrm{S}(q))$. 
Let us recall that $\mathrm{S}_{0}(q)=\mathrm{S}(q)$. It is obvious that the operator $\mathrm{S}(q)$ has discrete spectrum if and only if the operator $\mathrm{B}$ has. We get 
\begin{equation}\label{eq_54}
 (\mathrm{B} u,u)_{L^{2}(\mathbb{R})}\geq (u,u)_{L^{2}(\mathbb{R})}.
\end{equation}
Then due to Rellich Theorem the operator $\mathrm{B}$ has discrete spectrum if and only if the set
\begin{equation*}
\mathcal{M}=\{u\in \mathrm{Dom}(\mathrm{S}(q))|(\mathrm{B}u,u)_{L^{2}(\mathbb{R})}\leq 1\}
\end{equation*}
is precompact (i. e. every infinite sequence contains a Cauchy-sequence).

The norms of elements of $\mathcal{M}$ are uniformly bounded according to \eqref{eq_54}. 
Hence, choosing $h$ appropriately in \eqref{eq_48}, 
we see that $\|u'\|_{L^{2}(\mathbb{R})}^{2}$ is also uniformly bounded with respect to $u\in\mathcal{M}$. 
Thus, $\mathcal{M}$ is an equicontinuous family of functions $u\in L^{2}(\mathbb{R})$, i. e. 
\begin{equation*}
\|u(x+h)-u(x)\|_{L^{2}(\mathbb{R})}^{2}
\end{equation*}
vanishes uniformly as $h\rightarrow 0$. A compactness theorem of M.~Riesz can now be applied: The set $\mathcal{M}$ is
precompact if and only if
\begin{equation}\label{eq_56}
\lim_{n\rightarrow\infty}\left(\sup_{u\in\mathcal{M}}\int_{x>n}|u|^{2}d\,x\right)=0.
\end{equation}

We shall now prove that the condition 
\begin{equation}\label{eq_52}
\lim_{|a|\rightarrow \infty}\int_{a}^{a+h}d\,q(x)=+\infty \quad\text{ for all }h>0
\end{equation}
is sufficient for the discreteness of the spectrum. 
To this end we suppose that \eqref{eq_56} is not fulfilled. 
This means that we assume the existence of a sequence of functions $u_{n}\in \mathcal{M}$ for which
\begin{equation}\label{eq_58}
\int_{|x|>n}|u_{n}|^{2}d\,x\geq\eta^{-1}>0
\end{equation}
for some $\eta$ independent of $n$. Now
\begin{equation*}
(\mathrm{B}u_{n},u_{n})_{L^{2}(\mathbb{R})}=\int_{\mathbb{R}}|u_{n}'|^{2}d\,x+(2C^{2}+1)
\int_{\mathbb{R}}|u_{n}|^{2}d\,x+\int_{\mathbb{R}}|u_{n}|^{2}d\,q(x)\leq 1,
\end{equation*}
according to \eqref{eq_44}, and if $n\geq 1$, then 
\begin{equation*}
\int_{-n}^{n}|u_{n}'|^{2}d\,x+(2C^{2}+1)\int_{-n}^{n}|u_{n}|^{2}d\,x
+\int_{-n}^{n}|u_{n}|^{2}d\,q(x)\geq 0
\end{equation*}
due to Corollary \ref{cor_10}. Therefore, in view of \eqref{eq_58},
\begin{equation*}
\int_{\mathbb{R}}|u_{n}'|^{2}d\,x+(2C^{2}+1)
\int_{\mathbb{R}}|u_{n}|^{2}d\,x+\int_{\mathbb{R}}|u_{n}|^{2}d\,q(x)\leq
1\leq \eta\int_{x>n}|u_{n}|^{2}d\,x.
\end{equation*}
We split the set $(-\infty, -n)\cup(n,\infty)$ 
into a sum of disjoint intervals $J_{k}$ of equal length $l\leq 1$. (This number $l$ shall be the same
for all $n$. It will be clear below how $l$ is most suitable chosen, depending on the numbers $C$ and $\eta$ only.) Then
\begin{equation}\label{eq_60}
\sum_{k}\left[\int_{J_{k}}|u_{n}'|^{2}d\,x+(2C^{2}+1)
\int_{J_{k}}|u_{n}|^{2}d\,x+\int_{J_{k}}|u_{n}|^{2}d\,q(x)\right]\leq
\eta\sum_{k}\int_{J_{k}}|u_{n}|^{2}d\,x.
\end{equation}
Hence, there exists at least one interval $I_{n}=I$ among $J_{k}$ such that 
\begin{equation}\label{eq_62}
\int_{I}|u_{n}'|^{2}d\,x+(2C^{2}+1)
\int_{I}|u_{n}|^{2}d\,x+\int_{I}|u_{n}|^{2}d\,q(x)\leq
\eta\int_{I}|u_{n}|^{2}d\,x.
\end{equation}
Lemma \ref{lm_PRtaZ14} and \eqref{eq_62} yield
\begin{equation*}
(1-Cl)\|u_{n}'\|_{L^{2}(I)}^{2}+(2C^{2}+1-2Cl^{-1})\|u_{n}\|_{L^{2}(I)}^{2}
\leq \eta\|u_{n}\|_{L^{2}(I)}^{2}.
\end{equation*}
Let $v_{n}$ be a multiple of $u_{n}$ such that $\|v_{n}\|_{L^{2}(I)}^{2}=l$, and let $l<1/C$. Then
\begin{equation*}
\|v_{n}'\|_{L^{2}(I)}^{2}\leq (1-Cl)^{-1}(\eta+2Cl^{-1}-2C^{2}-1)l,
\end{equation*}
which yields 
\begin{equation*}
l\|v_{n}'\|_{L^{2}(I)}^{2}\leq l(1-Cl)^{-1}(\eta l+2C-l(2C^{2}+1)).
\end{equation*}
Since the expression on the right vanishes as $l\rightarrow 0$, there exists a number $l_{0}(\eta,C)$ 
depending only on $\eta$ and $C$ such that $l\leq l_{0}$ implies $l\|v_{n}'\|_{L^{2}(I)}^{2}\leq \frac{1}{2}$. 
Letting the intervals in \eqref{eq_60} have precisely the length $l_{0}$ 
we conclude from the Lemma~\ref{lm_PRtaZ12} that
\begin{equation}\label{eq_64}
1/4\leq |v_{n}(x)|^{2}\leq 9/4
\end{equation}
for all $x$ in $I$. Finally, we conclude from \eqref{eq_62}, which also holds for $v_{n}$ by homogeneity, that 
\begin{equation}\label{eq_66}
\int_{I}|v_{n}(x)|^{2}d\,q(x)\leq \|v_{n}\|_{L^{2}(I)}^{2}(\eta-2C^{2}-1)-\|v_{n}'\|_{L^{2}(I)}^{2}\leq l_{0}(\eta-2C^{2}-1)=K.
\end{equation}
In view of \eqref{eq_64} and \eqref{eq_66}, the assumptions of Lemma~\ref{lm_DOR12} are satisfied. Hence, 
\begin{equation*}
\int_{I}d\,q(x)\leq C_{0},
\end{equation*}
where $C_{0}$ depends only on $\|v_{n}'\|_{L^{2}(I)}^{2}$, $C$, and $K$, i. e. on $\eta$ and $C$ only.

Therefore, if $\mathcal{M}$ is not precompact, we can find a sequence of intervals $I_{n}$ of equal length $l_{0}$ 
and with $I_{n}$ outside the interval $|x|\leq n$ such that $\int_{I_{n}}d\,q(x)$ is uniformly bounded. Then \eqref{eq_52}
cannot be true; hence, $\mathcal{M}$ must be precompact if \eqref{eq_52} holds. This proves the sufficiency assertion of Theorem~\ref{thM_VtaOR_C}.

It remains to prove that condition \eqref{eq_52} for the discreteness of the spectrum is necessary. 
To do this let us consider the operator $\mathrm{B}^{1/2}$ instead of $\mathrm{B}$. 
The operator $\mathrm{B}^{1/2}$ has discrete spectrum if and only if the operator $\mathrm{B}$ has. 
Then Rellich Theorem for $\mathrm{B}^{1/2}$ reads as follows: spectrum of $\mathrm{B}^{1/2}$ is discrete if and only if the set
\begin{equation*}
\mathcal{M'}=\left\{u\in \mathrm{Dom}(\mathrm{B}^{1/2})\left| \|\mathrm{B}^{1/2}u\|_{L^{2}(\mathbb{R})}^{2}+\|u\|_{L^{2}(\mathbb{R})}^{2}\leq 1\right.\right\}
\end{equation*}
is precompact. The operator $\mathrm{B}^{1/2}$ is more convenient than the operator $\mathrm{B}$  for proving the necessity because 
\begin{equation*}
 C_{comp}^{\infty}(\mathbb{R})\subset \mathrm{Dom}(\mathrm{B}^{1/2}).
\end{equation*}
Let notice that $\mathrm{Dom}(\mathrm{B}^{1/2})$ coincides with the domain of the closure of the quadratic form $\dot{t}_{\dot{\mathrm{S}}_{0}(q)}$ generated by the preminimal operator $\dot{\mathrm{S}}_{0}(q)$.

Now, suppose that condition \eqref{eq_56} is not satisfied. 
This is equivalent to the existence of a sequence $\{\Delta\}_{1}^{\infty}$ of disjoint intervals of equal length $\kappa>0$ such that
\begin{equation}\label{eq_68}
\int_{\Delta_{\nu}}d\,q(x)\leq C_{1}
\end{equation}
for all $\nu$. Obviously there is no loss of generality to suppose that $\kappa\leq 1$, 
for otherwise we can find a sequence of intervals contained in $\Delta_{\nu}$ of length $\leq 1$ for which \eqref{eq_68} holds.

We  observe that \eqref{eq_68} implies the existence of an upper bound for the corresponding integral over any sub-interval
$J$ contained in $\Delta_{\nu}$, because 
\begin{equation*}\label{eq_70}
\int_{J}d\,q(x)=\int_{\Delta_{\nu}}d\,q(x)-\int_{\Delta_{\nu}-J}d\,q(x) \leq C_{1}+2C=K
\end{equation*}
in view of $(\mathrm{Br})$.

Let $\varphi_{1}\not\equiv 0$ be a twice continuously differentiable function with support contained in $\Delta_{1}$ and
let $\varphi_{\nu}$ be the translate of $\varphi_{1}$ to the interval $\Delta_{\nu}$. Applying Proposition~\ref{pr_12} we then get
\begin{align}\label{eq_72}
& \int_{\mathbb{R}}|\varphi_{\nu}'|^{2}d\,x+(2C^{2}+1)\int_{\mathbb{R}}|\varphi_{\nu}|^{2}d\,x
 +\int_{\mathbb{R}}|\varphi_{\nu}|^{2}d\,q(x) \\
 & \leq (1+K\kappa)\|\varphi_{\nu}'\|_{L^{2}(\mathbb{R})}^{2}+(2C^{2}+1+2K\kappa^{-1})\|\varphi_{\nu}\|_{L^{2}(\mathbb{R})}^{2} \notag \\
& =(1+K\kappa)\|\varphi_{1}'\|_{L^{2}(\mathbb{R})}^{2}+(2C^{2}+1+2K\kappa^{-1})\|\varphi_{1}\|_{L^{2}(\mathbb{R})}^{2} \notag
\end{align}
for all $\nu$. Since the functions $\varphi_{\nu}$ have disjoint supports, it follows that
\begin{equation*}
\|\varphi_{j}-\varphi_{k}\|_{L^{2}(\mathbb{R})}^{2}=2\|\varphi_{1}\|_{L^{2}(\mathbb{R})}^{2}>0\;\text{when}\; j\neq k.
\end{equation*}
Hence a set containing all the functions $\varphi_{\nu}$ cannot be precompact.

Further, supposing that $\varphi_{1}$ is normed 
so that the right hand side of \eqref{eq_72} does not exceed, say, $\frac{1}{2}$, and using
the fact that $\mathrm{B}^{1/2}\geq \mathrm{I}$, we conclude that the set $\mathcal{M'}$ contains the sequence $\{\varphi_{\nu}\}_{1}^{\infty}$. Therefore $\mathcal{M'}$ is not precompact, and hence the spectrum of $\mathrm{S}(q)$
cannot be discrete. Thus, assumption \eqref{eq_68} must be false if $\mathrm{S}(q)$ has discrete spectrum. 
Consequently, \eqref{eq_52} is a necessary condition. 

The proof of Theorem~\ref{thM_VtaOR_C} is thereby complete.


\section{Proof of Theorem~\ref{thM_VtaOR_D}}

Theorem~\ref{th_PRtaZ10} and Theorem~\ref{th_16} together with Corollary~\ref{cr_dom} prove assertions (I) and (II) of Theorem~\ref{thM_VtaOR_D} respectively.

Let us prove assertion (III) of Theorem~\ref{thM_VtaOR_D}.

We shall deal with the domain of $\mathrm{B}^{1/2}$ 
instead of the domain of the sesquilinear form $t[u,v]$ 
(which is a closure of the form $\dot{t}_{\dot{\mathrm{S}}_{0}(q)}[u,v]$ generated by the preminimal operator $\dot{\mathrm{S}}_{0}(q)$). 
Recall that
\begin{equation*}
\mathrm{B}=\mathrm{S}(q)+(2C^{2}+1)\mathrm{I} \qquad \text{and} \qquad \mathrm{Dom}(\mathrm{B})=\mathrm{Dom}(\mathrm{S}(q)).
\end{equation*} 
The operator $\mathrm{B}$ is selfadjoint and $\mathrm{B}\geq \mathrm{I}$. It is well known that $\mathrm{Dom}(\mathrm{B}^{1/2})$ coincides with $\mathrm{Dom}(t)$.

For arbitrary $f, g \in H_{comp}^{1}(\mathbb{R})$ we define a new inner product 
\begin{equation}\label{eq_74}
\langle f,g \rangle: =\int_{\mathbb{R}}f'\overline{g'}d\,x+\int_{\mathbb{R}}f\overline{g}d\,p(x),
\end{equation}
where $p(x):=q(x)+(2C^{2}+1)x$. Then in view of Corollary~\ref{cr_10LmPRtaZ14} we conclude that 
\begin{equation}\label{eq_76}
\begin{split}
\langle f,f \rangle=\int_{\mathbb{R}}|f'|_{L^{2}(\mathbb{R})}^{2}d\,x&+\int_{\mathbb{R}}|f'|_{L^{2}(\mathbb{R})}^{2}d\,p(x)\\
&\geq(1-C h)\|f'\|_{L^{2}(\mathbb{R})}^{2}+(2C^{2}+1-2Ch^{-1})\|f\|_{L^{2}(\mathbb{R})}^{2}
\end{split}
\end{equation}
for all positive $h\leq 1$. Therefore with a proper choice of $h$ we get 
\begin{equation}\label{eq_78}
\langle f,f \rangle\geq C_{1}(\|f'\|_{L^{2}(\mathbb{R})}^{2}+\|f\|_{L^{2}(\mathbb{R})}^{2})
\end{equation}
for some positive constant $C_{1}$. Closing $H_{comp}^{1}(\mathbb{R})$ in the norm \eqref{eq_76} we get a Hilbert space $\mathcal{R}$.
\begin{lemmaEng}\label{l_22}
The embedding $\mathcal{R}\subset H^{1}(\mathbb{R})$ holds true and the inner product in $\mathcal{R}$ is given by
\begin{equation}\label{eq_80}
	\langle f,h \rangle=\int_{\mathbb{R}}f'\overline{h'}d\,x+\int_{\mathbb{R}}f\overline{h}d\,p
\end{equation}
for any $h\in H_{comp}^{1}(\mathbb{R})$ and $f\in\mathcal{R}$.
\end{lemmaEng}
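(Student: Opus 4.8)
The plan is to use the coercivity estimate \eqref{eq_78} to realize the abstract completion $\mathcal{R}$ as a subspace of $H^{1}(\mathbb{R})$, and then to obtain \eqref{eq_80} by passing to the limit in the form \eqref{eq_74} against a fixed compactly supported argument.

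First I would record two facts about $\mathcal{R}$. By construction $H_{comp}^{1}(\mathbb{R})$ is dense in $\mathcal{R}$, and by \eqref{eq_78} on $H_{comp}^{1}(\mathbb{R})$ the norm of $\mathcal{R}$ dominates the $H^{1}(\mathbb{R})$-norm. Hence every $\mathcal{R}$-Cauchy sequence is $H^{1}(\mathbb{R})$-Cauchy, and the identity map of $H_{comp}^{1}(\mathbb{R})$ extends to a bounded linear map $\iota\colon\mathcal{R}\to H^{1}(\mathbb{R})$ with $\|\iota f\|_{H^{1}(\mathbb{R})}^{2}\le C_{1}^{-1}\langle f,f\rangle$, $C_{1}$ being the positive constant from \eqref{eq_78}. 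For the inclusion $\mathcal{R}\subset H^{1}(\mathbb{R})$ to be meaningful it then remains only to see that $\iota$ is injective; this will be a by-product of \eqref{eq_80}.

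To prove \eqref{eq_80} I would fix $f\in\mathcal{R}$ and $h\in H_{comp}^{1}(\mathbb{R})$, choose $f_{n}\in H_{comp}^{1}(\mathbb{R})$ with $f_{n}\to f$ in $\mathcal{R}$, and pick a compact interval $K$ with $\mathrm{supp}\,h\subset K$. With $p(x)=q(x)+(2C^{2}+1)x$ as in \eqref{eq_76}, and since $h$ vanishes off $K$,
\begin{equation*}
\langle f_{n},h\rangle=\int_{\mathbb{R}}f_{n}'\overline{h'}\,dx+\int_{K}f_{n}\overline{h}\,dp(x).
\end{equation*}
By the construction of $\iota$ one has $f_{n}\to\iota f$ in $H^{1}(\mathbb{R})$, so $f_{n}'\to(\iota f)'$ in $L^{2}(\mathbb{R})$ (which takes care of the first integral), and $f_{n}\to\iota f$ in $H^{1}(K)$, hence uniformly on $K$ by the one-dimensional Sobolev embedding. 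Since $q\in\mathrm{BV}_{loc}(\mathbb{R})$, the function $p$ has bounded variation on $K$, so $dp$ restricted to $K$ is a finite signed measure; together with the uniform convergence and the boundedness of $h$ this gives $\int_{K}f_{n}\overline{h}\,dp\to\int_{K}\iota f\,\overline{h}\,dp$. Thus $\langle f_{n},h\rangle\to\int_{\mathbb{R}}(\iota f)'\overline{h'}\,dx+\int_{\mathbb{R}}\iota f\,\overline{h}\,dp$, while $\langle f_{n},h\rangle\to\langle f,h\rangle$ by continuity of the inner product of $\mathcal{R}$; comparing limits yields \eqref{eq_80} (with $\iota f$ written simply as $f$). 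Injectivity of $\iota$ follows at once: if $\iota f=0$, then \eqref{eq_80} forces $\langle f,h\rangle=0$ for every $h\in H_{comp}^{1}(\mathbb{R})$, and density of $H_{comp}^{1}(\mathbb{R})$ in $\mathcal{R}$ gives $\langle f,f\rangle=0$, i.e.\ $f=0$ in $\mathcal{R}$.

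The one genuinely delicate point is the consistency of the two completions: a priori $\mathcal{R}$ is an abstract Hilbert space and one must rule out that $\iota$ has nontrivial kernel (a closability-type issue). The pairing argument above is exactly what settles it — the bounded functionals $f\mapsto\langle f,h\rangle$ on $\mathcal{R}$, $h\in H_{comp}^{1}(\mathbb{R})$, separate points of $\mathcal{R}$ and factor through $\iota$ by \eqref{eq_80}. All the remaining ingredients are routine: $L^{2}$-convergence of derivatives for the Dirichlet term, and convergence of Riemann--Stieltjes integrals against the finite measure $dp|_{K}$ for the potential term — the latter being precisely where the hypothesis $q\in\mathrm{BV}_{loc}(\mathbb{R})$ is used.
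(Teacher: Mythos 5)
Your proposal is correct and follows essentially the same route as the paper: the embedding comes from the coercivity estimate \eqref{eq_78}, and identity \eqref{eq_80} is obtained by approximating $f$ by compactly supported $f_{\nu}$, passing to the limit in the Dirichlet term via $L^{2}$-convergence of $f_{\nu}'$ and in the potential term via uniform convergence of $f_{\nu}$ on $\mathrm{supp}\,h$ against the locally finite measure $dp$. The only difference is that you explicitly verify injectivity of the canonical map $\iota\colon\mathcal{R}\to H^{1}(\mathbb{R})$ via the pairing argument, a point the paper dismisses as following ``immediately'' from \eqref{eq_78}; this is a welcome refinement rather than a different method.
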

\begin{proof}
The first assertion of the lemma follows immediately from \eqref{eq_78}. To prove the second one, let $f$ be defined by a sequence $\{f_{\nu}\}_{1}^{\infty}$ of elements from $H_{comp}^{1}(\mathbb{R})$. Then
\begin{equation}\label{eq_82}
	\langle	f_{\nu},h\rangle=\int_{\mathbb{R}}f_{\nu}'\overline{h'}d\,x+\int_{\mathbb{R}}f_{\nu}\overline{h}d\,p
\end{equation}
by definition. But $f_{\nu}'$ and $f_{\nu}$ converge in $L_{2}(\mathbb{R})$ to $f'$ and $f$ respectively. 
Hence, $f_{\nu}$ converges uniformly to $f$ on the support of $h$. 
Thus, the integral in \eqref{eq_82} tends to the integral in \eqref{eq_80}, which proves the lemma.
\end{proof}
\begin{lemmaEng}\label{l_24}
The domain $\mathrm{Dom}(\dot{\mathrm{S}}_{0}(q))$ is dense in $\mathcal{R}$.
\end{lemmaEng}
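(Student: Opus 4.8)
The plan is to identify the closure of $\mathrm{Dom}(\dot{\mathrm{S}}_{0}(q))$ in $\mathcal{R}$ with the form domain $\mathrm{Dom}(\mathrm{B}^{1/2})$, and then to show that $H_{comp}^{1}(\mathbb{R})\subset\mathrm{Dom}(\mathrm{B}^{1/2})$; since $H_{comp}^{1}(\mathbb{R})$ is dense in $\mathcal{R}$ by the very definition of $\mathcal{R}$, this forces the closed subspace $\mathrm{Dom}(\mathrm{B}^{1/2})$ of $\mathcal{R}$ to be all of $\mathcal{R}$, which is the assertion. For the first step, recall that $\mathrm{B}=\mathrm{S}(q)+(2C^{2}+1)\mathrm{I}$ is selfadjoint with $\mathrm{B}\geq\mathrm{I}$, and that $\mathrm{S}(q)=\mathrm{S}_{0}(q)=\overline{\dot{\mathrm{S}}_{0}(q)}$; hence $\mathrm{Dom}(\dot{\mathrm{S}}_{0}(q))$ is a core for $\mathrm{B}$, and therefore a form core, i.e. dense in $\mathrm{Dom}(\mathrm{B}^{1/2})$ with respect to the form norm $\|u\|_{+}:=\|\mathrm{B}^{1/2}u\|_{L^{2}(\mathbb{R})}=(\mathrm{B}u,u)_{L^{2}(\mathbb{R})}^{1/2}$. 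For $u\in\mathrm{Dom}(\dot{\mathrm{S}}_{0}(q))\subset H_{comp}^{1}(\mathbb{R})$, part (I) of Theorem~\ref{thM_VtaOR_D} gives
\begin{equation*}
\|u\|_{+}^{2}=(\dot{\mathrm{S}}_{0}(q)u,u)_{L^{2}(\mathbb{R})}+(2C^{2}+1)\|u\|_{L^{2}(\mathbb{R})}^{2}=\int_{\mathbb{R}}|u'|^{2}\,dx+\int_{\mathbb{R}}|u|^{2}\,dp(x)=\langle u,u\rangle ,
\end{equation*}
so on $\mathrm{Dom}(\dot{\mathrm{S}}_{0}(q))$ the form norm coincides with the $\mathcal{R}$-norm. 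Since the $\mathcal{R}$-norm dominates the $H^{1}(\mathbb{R})$-norm by \eqref{eq_78} and the form norm dominates the $L^{2}(\mathbb{R})$-norm, both completions are realized inside $H^{1}(\mathbb{R})$ and the limit of an $\|\cdot\|_{+}$-Cauchy sequence from $\mathrm{Dom}(\dot{\mathrm{S}}_{0}(q))$ is the same whether it is taken in $\mathcal{R}$ or in $\mathrm{Dom}(\mathrm{B}^{1/2})$; thus $\mathrm{Dom}(\mathrm{B}^{1/2})$ coincides with the closure of $\mathrm{Dom}(\dot{\mathrm{S}}_{0}(q))$ in $\mathcal{R}$.

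For the second step I would use the standard criterion for the selfadjoint operator $\mathrm{B}\geq\mathrm{I}$: a vector $f$ lies in $\mathrm{Dom}(\mathrm{B}^{1/2})$ if and only if there is a constant $C_{f}$ with $|(f,\mathrm{B}g)_{L^{2}(\mathbb{R})}|\leq C_{f}\|\mathrm{B}^{1/2}g\|_{L^{2}(\mathbb{R})}$ for all $g\in\mathrm{Dom}(\mathrm{B})=\mathrm{Dom}(\mathrm{S}(q))$. Fix $f\in H_{comp}^{1}(\mathbb{R})$ with $\mathrm{supp}\,f\subset[-N,N]$. For $g\in\mathrm{Dom}(\mathrm{S}(q))$, which lies in $H^{1}(\mathbb{R})$ by Corollary~\ref{cr_dom}, an integration by parts of the same type as in \eqref{eq_36} — pairing $l_{q}[g]$ against the compactly supported $\overline{f}$, all boundary terms vanishing and the Stieltjes integration by parts being legitimate because $g\overline{f}\in H_{comp}^{1}(\mathbb{R})$ is continuous — yields
\begin{equation*}
(\mathrm{B}g,f)_{L^{2}(\mathbb{R})}=\int_{\mathbb{R}}g'\overline{f'}\,dx+\int_{\mathbb{R}}g\overline{f}\,dp(x)=\int_{\mathbb{R}}g'\overline{f'}\,dx+\int_{-N}^{N}g\overline{f}\,dq(x)+(2C^{2}+1)\int_{\mathbb{R}}g\overline{f}\,dx .
\end{equation*}
Each term on the right is then bounded by a multiple of $\|\mathrm{B}^{1/2}g\|_{L^{2}(\mathbb{R})}$: the first by the Cauchy--Schwarz inequality together with $\|g'\|_{L^{2}(\mathbb{R})}^{2}\leq\mathrm{const}\cdot(\mathrm{B}g,g)_{L^{2}(\mathbb{R})}$, which follows from \eqref{eq_46} for a suitable $h$ and from $\mathrm{B}\geq\mathrm{I}$; the third by Cauchy--Schwarz and $\|g\|_{L^{2}(\mathbb{R})}\leq\|\mathrm{B}^{1/2}g\|_{L^{2}(\mathbb{R})}$; the middle one by $\big|\int_{-N}^{N}g\overline{f}\,dq\big|\leq\|f\|_{L^{\infty}([-N,N])}\,\|g\|_{L^{\infty}([-N,N])}\cdot V$, where $V$ is the (finite, since $q\in\mathrm{BV}_{loc}(\mathbb{R})$) total variation of $q$ over $[-N,N]$, followed by the Sobolev estimate $\|g\|_{L^{\infty}([-N,N])}\leq\mathrm{const}\cdot\|g\|_{H^{1}(\mathbb{R})}\leq\mathrm{const}\cdot\|\mathrm{B}^{1/2}g\|_{L^{2}(\mathbb{R})}$. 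Summing, $|(f,\mathrm{B}g)_{L^{2}(\mathbb{R})}|\leq C_{f}\|\mathrm{B}^{1/2}g\|_{L^{2}(\mathbb{R})}$, so $f\in\mathrm{Dom}(\mathrm{B}^{1/2})$.

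Combining the two steps: $H_{comp}^{1}(\mathbb{R})\subset\mathrm{Dom}(\mathrm{B}^{1/2})$, the latter is closed in $\mathcal{R}$, and $H_{comp}^{1}(\mathbb{R})$ is dense in $\mathcal{R}$; hence $\mathrm{Dom}(\mathrm{B}^{1/2})=\mathcal{R}$, i.e. $\mathrm{Dom}(\dot{\mathrm{S}}_{0}(q))$ is dense in $\mathcal{R}$. The main obstacle — and the only place where the hypothesis $q\in\mathrm{BV}_{loc}(\mathbb{R})$, as opposed to $q\in L_{loc}^{2}(\mathbb{R})$, genuinely enters — is the Green-type identity for $(\mathrm{B}g,f)$ together with the estimate of $\int_{-N}^{N}g\overline{f}\,dq$ in terms of $\|\mathrm{B}^{1/2}g\|$; the remaining steps are soft functional analysis. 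An alternative, more computational route, avoiding the abstract criterion, is to approximate a given $f\in H_{comp}^{1}(\mathbb{R})$ in the $\mathcal{R}$-norm by functions $\chi v$, where $v$ runs over the domain of the Friedrichs extension of the preminimal operator on a large interval containing $\mathrm{supp}\,f$ (operator domain dense in form domain, and the local $\mathcal{R}$-norm is comparable to the $H^{1}$-norm on a compact interval by Lemmas~\ref{lm_PRtaZ10} and~\ref{lm_PRtaZ12}), and $\chi$ is a smooth cut-off equal to $1$ on $\mathrm{supp}\,f$; one verifies $\chi v\in\mathrm{Dom}(\dot{\mathrm{S}}_{0}(q))$ directly from $(\chi v)^{[1]}=\chi'v+\chi v^{[1]}$, and that multiplication by $\chi$ is bounded from the local form space into $\mathcal{R}$.
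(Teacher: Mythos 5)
Your argument is correct, but it runs in the opposite logical direction from the paper's. The paper proves the lemma by a two-line duality argument: if $f\in\mathcal{R}$ satisfies $\langle f,u\rangle=0$ for all $u\in\mathrm{Dom}(\dot{\mathrm{S}}_{0}(q))$, then by the explicit formula of Lemma~\ref{l_22} and an integration by parts $\langle f,u\rangle=(f,\mathrm{B}u)_{L^{2}(\mathbb{R})}$, and since $\mathrm{B}\geq\mathrm{I}$ is boundedly invertible and $\mathrm{Dom}(\dot{\mathrm{S}}_{0}(q))$ is a core for it, the range $\mathrm{B}(\mathrm{Dom}(\dot{\mathrm{S}}_{0}(q)))$ is dense in $L^{2}(\mathbb{R})$, forcing $f=0$. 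You instead prove the stronger statement $\mathrm{Dom}(\mathrm{B}^{1/2})=\mathcal{R}$ directly — first identifying the $\mathcal{R}$-closure of $\mathrm{Dom}(\dot{\mathrm{S}}_{0}(q))$ with the form domain via the core/form-core argument and the coincidence of the two norms on $\mathrm{Dom}(\dot{\mathrm{S}}_{0}(q))$, then showing $H_{comp}^{1}(\mathbb{R})\subset\mathrm{Dom}(\mathrm{B}^{1/2})$ by the quadratic-form boundedness criterion, with the measure term controlled crudely by the local total variation of $q$ (legitimate since $f$ is fixed with compact support) — and you deduce the lemma as a corollary. This inverts the paper's order, where Theorem~\ref{th_20} is derived \emph{from} Lemma~\ref{l_24}; there is no circularity in your version, since neither of your two steps invokes the lemma. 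What your route buys is that it delivers Theorem~\ref{th_20} (and the inclusion $H_{comp}^{1}(\mathbb{R})\subset\mathrm{Dom}(\mathrm{B}^{1/2})$, which the paper never states explicitly) in the same stroke, at the cost of being considerably longer and of requiring the abstract characterization of $\mathrm{Dom}(\mathrm{B}^{1/2})$ by bounded sesquilinear functionals; the key analytic ingredient, the Green-type identity $(\mathrm{B}g,f)_{L^{2}(\mathbb{R})}=\int_{\mathbb{R}}g'\overline{f'}\,dx+\int_{\mathbb{R}}g\overline{f}\,dp$ for compactly supported $f$, is the same in both proofs.
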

\begin{proof}
Suppose that $\langle f,u\rangle=0$ for every $u\in\mathrm{Dom}(\dot{\mathrm{S}}_{0}(q))$ and some $f\in\mathcal{R}$. Integrating by
parts we obtain 
\begin{equation*}
	0=\langle f,u \rangle=\int_{\mathbb{R}}f'\overline{u'}d\,x+	\int_{\mathbb{R}}f\overline{u}d\,p 
	=-\int_{\mathbb{R}}f\overline{u''}d\,x+\int_{\mathbb{R}}f\overline{u}d\,p=(f,\mathrm{B} u)_{L^{2}(\mathbb{R})},
\end{equation*}
according to lemma \ref{l_22}. 

But $\mathrm{B}(\mathrm{Dom}(\dot{\mathrm{S}}_{0}(q)))$ is dense in $L_{2}(\mathbb{R})$, 
hence $f=0$, which proves the lemma.
\end{proof}
\begin{theoremEng}\label{th_20}
The domain of the operator $\mathrm{B}^{1/2}$ coincides with $\mathcal{R}$.
\end{theoremEng}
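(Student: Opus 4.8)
The plan is to realize both $\mathcal{R}$ and $\mathrm{Dom}(\mathrm{B}^{1/2})$ as the completion of one and the same normed space, namely $\mathrm{Dom}(\dot{\mathrm{S}}_{0}(q))$ equipped with the norm $\langle\,\cdot\,,\cdot\,\rangle^{1/2}$ from \eqref{eq_74}, and then to identify these two completions as the same subset of $H^{1}(\mathbb{R})$.

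First I would record the structural facts. Since $\mathrm{B}$ is selfadjoint with $\mathrm{B}\geq\mathrm{I}$, the space $\mathrm{Dom}(\mathrm{B}^{1/2})$ is complete in the norm $u\mapsto\|\mathrm{B}^{1/2}u\|_{L^{2}(\mathbb{R})}$, this norm dominates the $L^{2}(\mathbb{R})$-norm, and on $\mathrm{Dom}(\mathrm{B})$ it equals $(\mathrm{B}u,u)_{L^{2}(\mathbb{R})}^{1/2}$. For $u\in\mathrm{Dom}(\dot{\mathrm{S}}_{0}(q))\subset\mathrm{Dom}(\mathrm{B})$, identity \eqref{eq_44} gives $\|\mathrm{B}^{1/2}u\|_{L^{2}(\mathbb{R})}^{2}=(\mathrm{S}(q)u,u)_{L^{2}(\mathbb{R})}+(2C^{2}+1)\|u\|_{L^{2}(\mathbb{R})}^{2}=\langle u,u\rangle$, the last equality being \eqref{eq_74} together with $\int_{\mathbb{R}}|u|^{2}\,dp=\int_{\mathbb{R}}|u|^{2}\,dq+(2C^{2}+1)\int_{\mathbb{R}}|u|^{2}\,dx$; thus the $\mathrm{B}^{1/2}$-norm and the $\mathcal{R}$-norm coincide on $\mathrm{Dom}(\dot{\mathrm{S}}_{0}(q))$. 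Next, by Theorem~\ref{thM_VtaOR_B} the closure of $\dot{\mathrm{S}}_{0}(q)$ is $\mathrm{S}(q)=\mathrm{S}_{0}(q)$, so $\mathrm{Dom}(\dot{\mathrm{S}}_{0}(q))$ is a core for the selfadjoint operator $\mathrm{B}$; an operator core of a bounded below selfadjoint operator is also a form core, hence $\mathrm{Dom}(\dot{\mathrm{S}}_{0}(q))$ is dense in $\mathrm{Dom}(\mathrm{B}^{1/2})$ in the $\mathrm{B}^{1/2}$-norm, while Lemma~\ref{l_24} makes it dense in $\mathcal{R}$.

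With these in hand I would argue the two inclusions directly. Given $f\in\mathcal{R}$, choose by Lemma~\ref{l_24} a sequence $u_{n}\in\mathrm{Dom}(\dot{\mathrm{S}}_{0}(q))$ with $\langle f-u_{n},f-u_{n}\rangle\to0$; by the coercivity \eqref{eq_78} then $u_{n}\to f$ in $H^{1}(\mathbb{R})$, in particular in $L^{2}(\mathbb{R})$. As $(u_{n})$ is Cauchy in the $\mathrm{B}^{1/2}$-norm and $\mathrm{Dom}(\mathrm{B}^{1/2})$ is complete, $u_{n}$ converges there to some $g$; since the $\mathrm{B}^{1/2}$-norm dominates the $L^{2}(\mathbb{R})$-norm, also $u_{n}\to g$ in $L^{2}(\mathbb{R})$, whence $f=g\in\mathrm{Dom}(\mathrm{B}^{1/2})$ and $\|\mathrm{B}^{1/2}f\|_{L^{2}(\mathbb{R})}^{2}=\langle f,f\rangle$. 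Conversely, given $f\in\mathrm{Dom}(\mathrm{B}^{1/2})$, use that $\mathrm{Dom}(\dot{\mathrm{S}}_{0}(q))$ is a form core to pick $u_{n}\in\mathrm{Dom}(\dot{\mathrm{S}}_{0}(q))$ with $\|\mathrm{B}^{1/2}(f-u_{n})\|_{L^{2}(\mathbb{R})}\to0$; then $(u_{n})$ is Cauchy in $\mathcal{R}$, converges there to some $g\in\mathcal{R}\subset H^{1}(\mathbb{R})$ by Lemma~\ref{l_22}, hence $u_{n}\to g$ in $L^{2}(\mathbb{R})$, and comparison with $u_{n}\to f$ in $L^{2}(\mathbb{R})$ gives $f=g\in\mathcal{R}$. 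Thus $\mathcal{R}=\mathrm{Dom}(\mathrm{B}^{1/2})$ as subsets of $H^{1}(\mathbb{R})$, with identical norms.

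I do not expect a deep obstacle here. The one point needing care is the passage from ``operator core'' to ``form core'' for $\mathrm{B}$ --- a standard fact for semibounded selfadjoint operators, which one can justify by first approximating an arbitrary element of $\mathrm{Dom}(\mathrm{B}^{1/2})$ in form norm by vectors of $\mathrm{Dom}(\mathrm{B})$ (via the spectral projections) and then using the elementary bound $(\mathrm{B}(v-w),v-w)_{L^{2}(\mathbb{R})}\leq\|\mathrm{B}(v-w)\|_{L^{2}(\mathbb{R})}\,\|v-w\|_{L^{2}(\mathbb{R})}$ to turn $L^{2}$-approximation of both $v$ and $\mathrm{B}v$ by $\mathrm{Dom}(\dot{\mathrm{S}}_{0}(q))$-vectors into form-norm approximation. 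The remaining subtlety --- that the two abstract completions are literally the same set --- is exactly what Lemma~\ref{l_22}, the inequality \eqref{eq_78}, and $\mathrm{B}\geq\mathrm{I}$ provide, by making both completions embed compatibly into $H^{1}(\mathbb{R})\subset L^{2}(\mathbb{R})$.
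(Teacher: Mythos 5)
Your proposal is correct and follows essentially the same route as the paper: both rest on the coincidence of the $\mathrm{B}^{1/2}$-norm with the $\mathcal{R}$-norm on $\mathrm{Dom}(\dot{\mathrm{S}}_{0}(q))$ via \eqref{eq_44}, the fact that this set is an operator (hence form) core for $\mathrm{B}$, and its density in $\mathcal{R}$ from Lemma~\ref{l_24}. The only cosmetic difference is in the final identification: the paper deduces $\mathcal{R}\subset\mathrm{Dom}(\mathrm{B}^{1/2})$ by the orthogonal-complement argument of Lemma~\ref{l_24}, while you run a symmetric two-sided approximation and identify the limits inside $L^{2}(\mathbb{R})$ --- both are valid and interchangeable.
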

\begin{proof}
We first note that $\mathrm{Dom}(\dot{\mathrm{S}}_{0}(q))$ is dense 
in $\mathrm{Dom}(\mathrm{S}(q))$ in the graph norm, because $\mathrm{S}$ is the closure of its restriction to $\mathrm{Dom}(\dot{\mathrm{S}}_{0}(q))$. 
Using well-known functional calculus for operators, we then conclude that $\mathrm{Dom}(\dot{\mathrm{S}}_{0}(q))$ is also dense in the domain of $\mathrm{B}^{1/2}$ in the corresponding graph norm. 
Since
\begin{equation*}
\left(\mathrm{B}^{1/2}u,\mathrm{B}^{1/2}u\right)_{L^{2}(\mathbb{R})}=\left(\mathrm{B} u,u\right)_{L^{2}(\mathbb{R})}=\langle u,u \rangle
\end{equation*}
for all $u\in \mathrm{Dom}(\dot{\mathrm{S}}_{0}(q))$, 
then the domain of $\mathrm{B}^{1/2}$ is obtained by closing $\mathrm{Dom}(\dot{\mathrm{S}}_{0}(q))$ 
with respect to the norm in $\mathcal{R}$. Thus	
\begin{equation*}
	\mathrm{Dom}(\mathrm{B}^{1/2})\subset\mathcal{R}.
\end{equation*}
But Lemma \ref{l_24} shows that $\mathrm{Dom}(\mathrm{B}^{1/2})$ cannot be a proper subset of $\mathcal{R}$, 
for then some $f\in\mathcal{R}\setminus\{0\}$ would be orthogonal to all $h\in\mathrm{Dom}(\mathrm{B}^{1/2})$ 
and hence to all $u\in \mathrm{Dom}(\dot{\mathrm{S}}_{0}(q))$, which is possible only for $f=0$. Thus
\begin{equation*}
	\mathrm{Dom}(\mathrm{B}^{1/2})=\mathcal{R}
\end{equation*}
and the theorem is proved.
\end{proof}

Remark that we have not given any explicit form for the inner product $\langle f,g \rangle$ 
of arbitrary elements in $\mathcal{R}$. 
It may be of interest to note, however, that an integral expression corresponding to \eqref{eq_74} does give the
inner product $\langle f,g \rangle$ for arbitrary $f, g\in\mathcal{R}$.
\begin{lemmaEng}\label{l_26}
The inner product in $\mathcal{R}$ is given by
\begin{equation*}\label{eq_84}
	\langle f,g\rangle=\lim_{M,N\rightarrow\infty}\left(\int_{-M}^{N}f'\overline{g'}d\,x+\int_{-M}^{N}f\overline{g}d\,p\right).
\end{equation*}
\end{lemmaEng}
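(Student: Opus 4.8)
The plan is to extend the inner product formula \eqref{eq_74} from $H_{comp}^{1}(\mathbb{R})$ to all of $\mathcal{R}$, reducing it first by polarization to the ``diagonal'' identity
\[
\langle f,f\rangle=\lim_{M,N\rightarrow\infty}\Bigl(\int_{-M}^{N}|f'|^{2}d\,x+\int_{-M}^{N}|f|^{2}d\,p\Bigr),\qquad f\in\mathcal{R};
\]
this is legitimate since for fixed $M,N$ the map $(f,g)\mapsto\int_{-M}^{N}f'\overline{g'}d\,x+\int_{-M}^{N}f\overline{g}d\,p$ is a genuine sesquilinear form ($d\,p$ is finite on $[-M,N]$ and $f\overline{g}$ is bounded there because $\mathcal{R}\subset H^{1}(\mathbb{R})$, see Lemma~\ref{l_22} and \eqref{eq_78}). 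As $f'\in L^{2}(\mathbb{R})$ we have $\int_{-M}^{N}|f'|^{2}d\,x\rightarrow\|f'\|_{L^{2}(\mathbb{R})}^{2}$, so everything comes down to showing that $\lim_{M,N\rightarrow\infty}\int_{-M}^{N}|f|^{2}d\,p$ exists and equals $\langle f,f\rangle-\|f'\|_{L^{2}(\mathbb{R})}^{2}$; I write $\|\cdot\|_{\mathcal{R}}$ for the norm of $\mathcal{R}$.

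\emph{Step 1: a multiplier estimate.} I would first prove that there is a constant $C_{0}$, depending only on $C$ and on the uniform bounds in \eqref{eq_33}, with $\|\chi f\|_{\mathcal{R}}\leq C_{0}\|f\|_{\mathcal{R}}$ for every $f\in\mathcal{R}$ and every $\chi$ from the family \eqref{eq_33} as well as every $\chi=\varphi^{2}(\cdot,r,R)$ (which again belongs to that family). It suffices to treat $f\in H_{comp}^{1}(\mathbb{R})$ and pass to the limit, since $\chi f\in H_{comp}^{1}(\mathbb{R})\subset\mathcal{R}$. On a unit interval $I$ write $\chi^{2}|f|^{2}=|f|^{2}-(1-\chi^{2})|f|^{2}$ and apply Lemma~\ref{lm_PRtaZ10} to the nonnegative, locally absolutely continuous function $(1-\chi^{2})|f|^{2}$ and the measure $-d\,q$ (which satisfies a condition of type $(\mathrm{Br})$); using Lemma~\ref{lm_PRtaZ12} for $\inf_{I}|f|^{2}$ and the Cauchy inequality for the variation one gets $\int_{I}(1-\chi^{2})|f|^{2}d\,q\geq-C_{\chi}\bigl(\|f\|_{L^{2}(I)}^{2}+\|f'\|_{L^{2}(I)}^{2}\bigr)$, hence the same lower bound for $\int_{I}(1-\chi^{2})|f|^{2}d\,p$ (as $d\,x\geq0$). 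Summing over unit intervals gives $\int_{\mathbb{R}}\chi^{2}|f|^{2}d\,p\leq\int_{\mathbb{R}}|f|^{2}d\,p+C_{\chi}\|f\|_{H^{1}(\mathbb{R})}^{2}$; together with $\|(\chi f)'\|_{L^{2}(\mathbb{R})}^{2}\leq2\|\chi'\|_{\infty}^{2}\|f\|_{L^{2}(\mathbb{R})}^{2}+2\|f'\|_{L^{2}(\mathbb{R})}^{2}$, the identity $\|f\|_{\mathcal{R}}^{2}=\|f'\|_{L^{2}(\mathbb{R})}^{2}+\int_{\mathbb{R}}|f|^{2}d\,p$ on $H_{comp}^{1}(\mathbb{R})$, and $\|f\|_{H^{1}(\mathbb{R})}^{2}\leq\mathrm{const}\cdot\|f\|_{\mathcal{R}}^{2}$, this yields $\|\chi f\|_{\mathcal{R}}^{2}\leq C_{0}\|f\|_{\mathcal{R}}^{2}$.

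\emph{Step 2: weak convergence and the cut-off squeeze.} By Step~1 the family $\{\varphi^{2}(\cdot,r,R)f\}_{r,R}$ is bounded in $\mathcal{R}$, and it converges to $f$ in $H^{1}(\mathbb{R})$ as $\min(r,R)\rightarrow\infty$; since $\mathcal{R}\hookrightarrow H^{1}(\mathbb{R})$ continuously, a bounded net in $\mathcal{R}$ converging in the coarser topology of $H^{1}(\mathbb{R})$ converges weakly in $\mathcal{R}$, so $\varphi^{2}(\cdot,r,R)f\rightharpoonup f$ in $\mathcal{R}$ and $\langle f,\varphi^{2}(\cdot,r,R)f\rangle\rightarrow\langle f,f\rangle$. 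On the other hand $\varphi^{2}f\in H_{comp}^{1}(\mathbb{R})$, so Lemma~\ref{l_22} gives $\langle f,\varphi^{2}f\rangle=2\int_{\mathbb{R}}\varphi\varphi'\,f'\overline{f}\,d\,x+\int_{\mathbb{R}}\varphi^{2}|f'|^{2}d\,x+\int_{\mathbb{R}}\varphi^{2}|f|^{2}d\,p$, where the first term tends to $0$ (its integrand is supported on $[-r-1,-r]\cup[R,R+1]$ with uniformly bounded coefficient, and $f,f'\in L^{2}(\mathbb{R})$) and the second to $\|f'\|_{L^{2}(\mathbb{R})}^{2}$ by dominated convergence; hence $\int_{\mathbb{R}}\varphi^{2}(x,r,R)|f|^{2}d\,p\rightarrow\langle f,f\rangle-\|f'\|_{L^{2}(\mathbb{R})}^{2}$, and likewise with parameters $(r-1,R-1)$. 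Finally I would run the cut-off identities exactly as in the passage preceding Theorem~\ref{th_16}: with $\psi_{1}=\varphi^{2}(x,r,R)$, $\psi_{2}=\varphi^{2}(x,r-1,R-1)$,
\[
\int_{\mathbb{R}}\psi_{2}|f|^{2}d\,p-o(1)\leq\int_{-r}^{R}|f|^{2}d\,p\leq\int_{\mathbb{R}}\psi_{1}|f|^{2}d\,p+o(1),
\]
the four boundary corrections being integrals of nonnegative locally absolutely continuous multiples of $|f|^{2}$ against $d\,p$ over unit intervals adjacent to $\pm r,\pm R$, each bounded below by $-o(1)$ (the $d\,x$-part is nonnegative, and the $d\,q$-part is estimated from below by Lemma~\ref{lm_PRtaZ10} applied to $-d\,q$, giving $-C(\|f\|_{L^{2}(\cdot)}^{2}+\|f'\|_{L^{2}(\cdot)}^{2})\rightarrow0$). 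Combined with the previous limit this squeezes $\int_{-r}^{R}|f|^{2}d\,p$ to $\langle f,f\rangle-\|f'\|_{L^{2}(\mathbb{R})}^{2}$ as $\min(r,R)\rightarrow\infty$; the diagonal case follows, and polarization gives the stated formula for all $f,g\in\mathcal{R}$.

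I expect Step~1 to be the main obstacle. Because $(\mathrm{Br})$ controls only the negative part of $d\,q$, there is no two-sided estimate for $\int_{I}|f|^{2}d\,q$ and $\int_{I}|f|^{2}d|q|$ cannot be bounded by $\|f\|_{\mathcal{R}}$ at all; the decomposition $\chi^{2}|f|^{2}=|f|^{2}-(1-\chi^{2})|f|^{2}$ is precisely what converts the problematic quantity into $\int_{I}|f|^{2}d\,p$ (which is finite and summable exactly because $f\in\mathcal{R}$) plus a remainder requiring only the available one-sided, $(\mathrm{Br})$-type lower bound. Once the multiplier estimate, and with it the weak convergence $\varphi^{2}(\cdot,r,R)f\rightharpoonup f$ in $\mathcal{R}$, are in hand, the remaining steps merely repeat cut-off computations already carried out in the paper.
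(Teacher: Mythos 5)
Your argument is correct, but it is organized quite differently from the paper's. The paper reduces to the diagonal identity by polarization (as you do) and then works with the partition of $\mathbb{R}$ into unit intervals: it defines the local forms $\langle f,f\rangle_{n}=\int_{n-1}^{n}|f'|^{2}d\,x+\int_{n-1}^{n}|f|^{2}d\,p$, which are nonnegative by Corollary~\ref{cor_10}, and proves that the series $P(f)=\sum_{n}\langle f,f\rangle_{n}$ equals $\langle f,f\rangle$ --- the inequality $P(f)\leq\langle f,f\rangle$ by Fatou's lemma along an approximating sequence from $H_{comp}^{1}(\mathbb{R})$, and the reverse inequality by applying Cauchy--Schwarz termwise to $\langle f,h\rangle_{n}$ and using the density of $H_{comp}^{1}(\mathbb{R})$ in $\mathcal{R}$; non-integer endpoints are then handled by the same one-sided boundary estimate you use at the end. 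You instead first establish the uniform multiplier bound $\|\chi f\|_{\mathcal{R}}\leq C_{0}\|f\|_{\mathcal{R}}$ for the cut-offs \eqref{eq_33} --- correctly identifying that the only obstacle is the one-sidedness of $(\mathrm{Br})$ and resolving it with the decomposition $\chi^{2}|f|^{2}=|f|^{2}-(1-\chi^{2})|f|^{2}$ --- and then extract the limit of $\int_{\mathbb{R}}\varphi^{2}|f|^{2}d\,p$ from the weak convergence $\varphi^{2}f\rightharpoonup f$ in $\mathcal{R}$ together with the explicit formula of Lemma~\ref{l_22}, before running the same cut-off squeeze. The paper's route is shorter and softer: nonnegativity of the local forms lets Fatou and Cauchy--Schwarz do all the work, and no multiplier estimate is needed. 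Your route costs more up front but yields two by-products of independent interest: the uniform boundedness of multiplication by the cut-offs on $\mathcal{R}$, and the convergence $\int_{\mathbb{R}}\varphi^{2}|f|^{2}d\,p\rightarrow\langle f,f\rangle-\|f'\|_{L^{2}(\mathbb{R})}^{2}$, which directly parallels the way the paper establishes the existence of the potential energy $\mathrm{Q}(u)$ on $\mathrm{Dom}(\mathrm{S}(q))$. (Both your weak-convergence step and the paper's Lemma~\ref{l_22} implicitly use that the continuous embedding $\mathcal{R}\subset H^{1}(\mathbb{R})$ coming from \eqref{eq_78} is injective on the completion; since the paper takes this for granted, it is not a gap specific to your argument.)
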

\begin{proof}
It is sufficient to prove that for every $f\in\mathcal{R}$
\begin{equation}\label{eq_86}
	\langle f,f\rangle=\lim_{M,N\rightarrow\infty}\left(\int_{-M}^{N}|f'|^{2}d\,x+
	\int_{-M}^{N}|f|^{2}d\,p\right),
\end{equation}
because then
\begin{align*}
	4\langle f,g\rangle & =\langle f+g,f+g\rangle-\langle f-g,f-g\rangle+i\left[\langle f+i g,f+i g\rangle-\langle f-i g,f-i g\rangle\right] \\
	& =\lim_{M,N\rightarrow\infty}4\left(\int_{-M}^{N}f'\overline{g'}d\,x+
	\int_{-M}^{N}f\overline{g}d\,p\right).
\end{align*}
	
We define
\begin{equation*}
	\langle f,f\rangle_{n}=\int_{n-1}^{n}|f'|^{2}d\,x+\int_{n-1}^{n}|f|^{2}d\,p
\end{equation*}
for any $f\in\mathcal{R}$ and infer from the Corollary~\ref{cor_10} to Lemma \ref{lm_PRtaZ14} that the number $\langle f,f\rangle_{n}$ is non-negative for all $n$. We proceed to prove that the series
\begin{equation}\label{eq_88}
	P(f)=\sum_{n=-\infty}^{\infty}\langle f,f\rangle_{n}
\end{equation}
converges to $\langle f,f\rangle$.
	
For any $h\in H_{comp}^{1}(\mathbb{R})$ the series in \eqref{eq_88} is finite and $P(h)=\langle h,h\rangle$. Now, let $f$ be an arbitrary element in $\mathcal{R}$, defined by a 	Cauchy-sequence $\{f_{\nu}\}_{1}^{\infty}$ of elements in $H_{comp}^{1}(\mathbb{R})$. Then, as we have seen, $f_{\nu}'$ converges in $L_{2}(\mathbb{R})$ to $f'$ and $f_{\nu}$ converges 	uniformly to $f$ on compacts. Thus the individual terms $\langle f_{\nu},f_{\nu}\rangle_{n}$ converge to $\langle f,f\rangle_{n}$ for every $n$. But $\langle f_{\nu},f_{\nu}\rangle$ converges to $\langle f,f\rangle$ and hence Fatou's lemma shows that
\begin{align*}
	P(f) & =\sum_{n=-\infty}^{\infty}\langle f,f\rangle_{n}=
	\sum_{n=-\infty}^{\infty}\lim_{\nu\rightarrow\infty}\langle f_{\nu},f_{\nu}\rangle_{n}
	\leq \lim_{\nu\rightarrow\infty}\sum_{n=-\infty}^{\infty}\langle f_{\nu},f_{\nu}\rangle_{n}\\
	& =\lim_{\nu\rightarrow\infty}P(f_{\nu})=\lim_{\nu\rightarrow\infty}\langle f_{\nu},f_{\nu}\rangle
	=\langle f,f\rangle.
\end{align*}
Thus, the series $P(f)$ converges, because its terms are non-negative, and $P(f)\leq \langle f,f\rangle$.
	
To obtain the converse inequality, we define $\langle f,h\rangle_{n}$ for $f\in\mathcal{R}$ and $h\in H_{comp}^{1}(\mathbb{R})$ by
\begin{equation*}
	\langle f,h\rangle_{n}=\int_{n-1}^{n}f'\overline{h'}d\,x+\int_{n-1}^{n}f\overline{h}d\,p.
\end{equation*}
Lemma \ref{l_22} shows that
\begin{equation*}
	\langle f,h\rangle=\sum_{n=-\infty}^{\infty}\langle f,h\rangle_{n},
\end{equation*}
the series in fact being finite. Since $\langle f,f\rangle_{n}$ is positive definite we get by Schwarz' inequality
\begin{equation*}
	|\langle f,h\rangle_{n}|^{2}\leq \langle f,f\rangle_{n}\langle h,h\rangle_{n}.
\end{equation*}
Hence
\begin{equation*}
	|\langle f,h\rangle|^{2}=\left|\sum_{n=-\infty}^{\infty}\langle f,h\rangle_{n}\right|^{2}
	\leq \sum_{n=-\infty}^{\infty}\langle f,f\rangle_{n}\sum_{n=-\infty}^{\infty}\langle h,h\rangle_{n}
	=P(f)\langle h,h\rangle.
\end{equation*}
This proves $P(f)\geq \langle f,f\rangle$, as $H_{comp}^{1}(\mathbb{R})$ is dense in $\mathcal{R}$. 
Therefore, $P(f)=\langle f,f\rangle$ in view of the inequality obtained above.
	
We have thus proved that the integral in \eqref{eq_86} converges to $\langle f,f\rangle$ 
when $\mathbb{Z}\ni M, N \rightarrow\infty$. 
But $f$ and $f'$ are both in $L^{2}(\mathbb{R})$; 
therefore we can apply Lemma~\ref{lm_PRtaZ14} to arbitrary $M$ and $N$ (as in the proof of Theorem~\ref{th_16}) to obtain
\begin{align*}
	\int_{-[M]-1}^{[N]+1}|f'|^{2}d\,x+\int_{-[M]-1}^{[N]+1}|f|^{2}d\,p+o(1)
	& \geq\int_{-M}^{N}|f'|^{2}d\,x+\int_{-M}^{N}|f|^{2}d\,p \\
	& \geq\int_{-[M]}^{[N]}|f'|^{2}d\,x+\int_{-[M]}^{[N]}|f|^{2}d\,p-o(1),
\end{align*}
with $[N]$ denoting the greatest integer $\leq N$. But we have proved that 
the expressions on the left and on the right both tend to $\langle f,f\rangle$. 
Hence the lemma is proved.
\end{proof}
\begin{theoremEng}\label{th_22}The equality 
\begin{equation*}
 \mathrm{Dom}(\mathrm{B}^{1/2})=\left\{u\in	H^{1}(\mathbb{R})\left|\;\exists\int_{\mathbb{R}}|u|^{2}d\,q\in \mathbb{R}\right.\right\},
\end{equation*}
holds, where the integral $\int_{\mathbb{R}}u\overline{v}d\,q(x)$ is considered as improper Riemann--Stieltjes integral.
\end{theoremEng}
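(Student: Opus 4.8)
The plan is to combine Theorem~\ref{th_20}, which identifies $\mathrm{Dom}(\mathrm{B}^{1/2})$ with the Hilbert space $\mathcal{R}$ (the completion of $H^{1}_{comp}(\mathbb{R})$ in the norm \eqref{eq_78}), with the elementary observation that, because $p(x)=q(x)+(2C^{2}+1)x$ and $u\in L^{2}(\mathbb{R})$, the improper Riemann--Stieltjes integral $\lim_{M,N\to\infty}\int_{-M}^{N}|u|^{2}d\,q$ exists in $\mathbb{R}$ if and only if $\lim_{M,N\to\infty}\int_{-M}^{N}|u|^{2}d\,p$ does, the two limits differing by $(2C^{2}+1)\|u\|_{L^{2}(\mathbb{R})}^{2}$. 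Thus it suffices to show that $\mathcal{R}$ coincides with the set $\mathcal{D}$ of all $u\in H^{1}(\mathbb{R})$ for which $\lim_{M,N\to\infty}\int_{-M}^{N}|u|^{2}d\,p$ exists in $\mathbb{R}$.

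The inclusion $\mathcal{R}\subseteq\mathcal{D}$ is immediate: given $f\in\mathcal{R}$, Lemma~\ref{l_22} gives $f\in H^{1}(\mathbb{R})$ and Lemma~\ref{l_26} gives
\begin{equation*}
\langle f,f\rangle=\lim_{M,N\to\infty}\Bigl(\int_{-M}^{N}|f'|^{2}d\,x+\int_{-M}^{N}|f|^{2}d\,p\Bigr)<\infty ;
\end{equation*}
since $f'\in L^{2}(\mathbb{R})$ the first integral tends to $\|f'\|_{L^{2}(\mathbb{R})}^{2}$, so $\lim_{M,N\to\infty}\int_{-M}^{N}|f|^{2}d\,p$ exists and is finite, i.e.\ $f\in\mathcal{D}$.

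For the reverse inclusion, fix $u\in\mathcal{D}$ and put $g_{n}:=\varphi(\cdot,r_{n},R_{n})\,u$ with $r_{n},R_{n}\to\infty$, where $\varphi$ is a cutoff as in \eqref{eq_33}. Each $g_{n}$ has compact support, hence $g_{n}\in H^{1}_{comp}(\mathbb{R})\subset\mathcal{R}$, and $g_{n}\to u$ in $H^{1}(\mathbb{R})$: dominated convergence gives $g_{n}\to u$ and $\varphi(\cdot,r_{n},R_{n})u'\to u'$ in $L^{2}$, while $\|\varphi'(\cdot,r_{n},R_{n})\,u\|_{L^{2}}\to0$ because $\varphi'$ is uniformly bounded and supported on the receding unit shells near $\pm r_{n},\pm R_{n}$. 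The essential point is that $\{g_{n}\}$ is bounded in $\mathcal{R}$; since $\|g_{n}'\|_{L^{2}(\mathbb{R})}$ is convergent, this reduces to $\sup_{n}\int_{\mathbb{R}}\varphi^{2}(\cdot,r_{n},R_{n})|u|^{2}d\,p<\infty$. To see this one reruns the squeezing argument of the paragraph preceding Theorem~\ref{th_16}, whose only ingredients are Lemma~\ref{lm_PRtaZ14} (applicable because $p$ satisfies a condition of type $(\mathrm{Br})$) and $u,u'\in L^{2}(\mathbb{R})$ — the hypothesis $u\in\mathrm{Dom}(\mathrm{S}(q))$ used there serving only to guarantee the existence of a limit, which we now have for free. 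Concretely, $\int_{\mathbb{R}}\varphi^{2}(\cdot,r_{n},R_{n})|u|^{2}d\,p$ equals $\int_{-r_{n}-1}^{R_{n}+1}|u|^{2}d\,p$ minus two integrals of $(1-\varphi^{2}(\cdot,r_{n},R_{n}))|u|^{2}$ over unit intervals; the former is bounded uniformly in $n$ because $u\in\mathcal{D}$, and the latter are bounded one-sidedly through $\|u\|_{L^{2}(\mathbb{R})}$ and $\|u'\|_{L^{2}(\mathbb{R})}$ by Lemma~\ref{lm_PRtaZ14}, while a lower bound for the whole integral follows from Corollary~\ref{cor_10} applied on each unit interval. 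Carrying out the squeeze from both sides even yields $\int_{\mathbb{R}}\varphi^{2}(\cdot,r_{n},R_{n})|u|^{2}d\,p\to Q+(2C^{2}+1)\|u\|_{L^{2}(\mathbb{R})}^{2}$, where $Q=\lim_{M,N\to\infty}\int_{-M}^{N}|u|^{2}d\,q$.

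Finally, since $\mathcal{R}$ is a Hilbert space, the bounded sequence $\{g_{n}\}$ has a subsequence converging weakly in $\mathcal{R}$ to some $v\in\mathcal{R}$; by \eqref{eq_78} the embedding $\mathcal{R}\hookrightarrow H^{1}(\mathbb{R})$ is continuous, hence weak-to-weak continuous, so $g_{n_{k}}\rightharpoonup v$ in $H^{1}(\mathbb{R})$ as well. But $g_{n}\to u$ strongly, hence weakly, in $H^{1}(\mathbb{R})$, and weak limits are unique, so $v=u$; therefore $u\in\mathcal{R}=\mathrm{Dom}(\mathrm{B}^{1/2})$, which completes the proof. (Alternatively, the sharpened estimate above shows directly that $\{g_{n}\}$ is a Cauchy sequence in $\mathcal{R}$, and one concludes by completeness.) The step I expect to be the main obstacle is exactly this uniform control of the Stieltjes integrals $\int\varphi^{2}|u|^{2}d\,p$ as the cutoffs expand: since $d\,q$ is not sign-definite, a monotone-convergence argument is unavailable and must be replaced by the one-sided unit-interval estimates of Lemma~\ref{lm_PRtaZ14} together with the local semiboundedness of Corollary~\ref{cor_10}.
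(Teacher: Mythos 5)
Your proposal is correct, and for the substantive half of the theorem it takes a genuinely different route from the paper. The forward inclusion ($\mathcal{R}\subseteq\mathcal{D}$, via Lemma~\ref{l_26} and $f,f'\in L^{2}$) coincides with the paper's. For the converse, the paper argues by duality: for $f$ as in the statement it shows that $F(g)=\lim_{M,N}(\int_{-M}^{N}g'\overline{f'}\,dx+\int_{-M}^{N}g\overline{f}\,dp)$ is a bounded functional on $\mathcal{R}$ (this rests on the estimate $|\langle f,h\rangle|^{2}\leq P(f)\langle h,h\rangle$ from the proof of Lemma~\ref{l_26}, with $P(f)<\infty$ because the nonnegative terms $\langle f,f\rangle_{n}$ have bounded partial sums), represents $F$ by some $h\in\mathcal{R}$ via Riesz, and concludes $f=h$ because $f-h$ is an $L^{2}$-solution of $\mathrm{B}u=0$ and $\mathrm{B}\geq\mathrm{I}$. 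You instead approximate $u$ by the cutoffs $g_{n}=\varphi(\cdot,r_{n},R_{n})u$, prove $\{g_{n}\}$ is bounded in $\mathcal{R}$ by the same unit-interval one-sided estimates the paper uses before Theorem~\ref{th_16}, and identify the weak limit with $u$ through the continuous embedding $\mathcal{R}\hookrightarrow H^{1}(\mathbb{R})$. Both arguments ultimately feed on the same ingredients (Lemma~\ref{lm_PRtaZ14}, Corollary~\ref{cor_10}, and the convergence of the improper integral), so neither is more general; the paper's version is shorter once Lemma~\ref{l_26} is in place, while yours is more self-contained and constructive, avoiding the Riesz representation and the (unproved in the paper) claim that $f-h$ solves $\mathrm{B}u=0$, at the price of a weak-compactness extraction. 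One small point to make explicit if you write this up: the one-sided bound on $\int_{I}(1-\varphi^{2})|u|^{2}\,dp$ over a unit interval is cleanest obtained by applying Ganelius' Lemma~\ref{lm_PRtaZ10} directly to the nonnegative BV function $(1-\varphi^{2})|u|^{2}$ (its variation is controlled by $\|u\|_{L^{2}(I)}\|u'\|_{L^{2}(I)}$ and $\|\varphi'\|_{\infty}\sup_{I}|u|^{2}$), rather than by Lemma~\ref{lm_PRtaZ14} literally, since $\sqrt{1-\varphi^{2}}\,u$ need not lie in $H^{1}(I)$; this is the same gloss the paper's own argument for the existence of $\mathrm{Q}(u)$ requires.
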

\begin{proof}
We have just shown that the limit in \eqref{eq_86} exists and is finite for all $f\in\mathcal{R}$. 
Since $f$ and $f'$ are in $L^{2}(\mathbb{R})$, then the potential energy exists and is finite.
	
Conversely, if $f$ satisfies the conditions of the theorem, the formula
\begin{equation*}
	F(g)=\lim_{M,N\rightarrow\infty}\left(\int_{-M}^{N}g'\overline{f'}d\,x+
	\int_{-M}^{N}g\overline{f}d\,p\right)
\end{equation*}
defines a continuous functional realized by some element $h\in\mathcal{R}$, and it is not difficult to prove that
the function $f-h$ must then be an $L^{2}$-solution to the 	equation $B u=0$. 
Since $B$ is positive this implies $f=h$, hence $f\in\mathcal{R}$ and the theorem is proved.
\end{proof}

\section{Some remarks}
Standard arguments show that the minimal operator $\mathrm{S}_{0}(q)$ is bounded below in the Hilbert space $L^{2}(\mathbb{R})$ if and only if minimal operators $\mathrm{S}_{0}^{\pm}(q)$, generated by the differential expression $\mathrm{S}(q)$ in Hilbert spaces $L^{2}(\mathbb{R}_{\pm})$ correspondingly are bounded below. 
Herein the discreteness of the spectrum of operator $\mathrm{S}_{0}(q)$ is equivalent to the discreteness of the both spectra of the operators $\mathrm{S}_{D}^{\pm}(q)$ that correspond to the selfadjoint extensions of operators $\mathrm{S}_{0}^{\pm}(q)$ with homogeneous Dirichlet condition at the end of the semi-axis $\mathbb{R}_{\pm}$.
Therefore Theorems \ref{thM_VtaOR_B} and \ref{thM_VtaOR_C} (reformulated accordingly) also hold for the Schr\"{o}dinger operators on the semi-axis, which were studied in~\cite{AlKsMl2010}. These theorems generalize the results \cite[Lemma~III.1]{AlKsMl2010} and \cite[Theorem~IV.1]{AlKsMl2010}.

The following example illustrates the difference between our results and the former ones.

\textbf{Example.} Let $\{x_{n}\}_{n=1}^{\infty}$ be an arbitrary strictly increasing unbounded sequence of positive numbers such that $x_{n+1}-x_{n}\rightarrow 0$ as $n\rightarrow \infty$. 
Choose $\rho>0$ and $\{\alpha_{2n-1}\}_{n=1}^{\infty}\subset \mathbb{R}_{+}$ arbitrarily.  
Consider the potential of the form 
\begin{equation*}
 q'(x)=\sum_{n=1}^{\infty}(\rho+\alpha_{2n-1})\delta(x-x_{2n-1})-\sum_{n=1}^{\infty}\rho\delta(x-x_{2n}).
\end{equation*}
Simple verification shows that the Radon measure $q'(x)$ does not satisfy conditions (A) and (B) from paper  \cite{Bra1985} and conditions of Theorem~IV.1 from \cite{AlKsMl2010}. 
However, $q'(x)$ satisfies condition $(\mathrm{Br})$. 
Therefore, operator $\mathrm{S}_{D}^{+}(q)$ is bounded below and self-adjoint. 
Due to Theorem \ref{thM_VtaOR_C} its spectrum is discrete if and only if 
\begin{equation*}
 \sum_{x_{2n-1}\in \Delta}\alpha_{2n-1}\rightarrow +\infty,
\end{equation*}
where the interval $\Delta\subset\mathbb{R}_{+}$ moves to $+\infty$ preserving its length.

\section*{Appendix}
Let us formulate some known statements about the operators $\dot{\mathrm{S}}_{0}(q)$, $\mathrm{S}_{0}(q)$ and  $\mathrm{S}(q)$, which are used in the paper. 
Their proofs may be found in \cite{MkMlMFAT2013n1, MkMlMFAT2013n2, ET2013, EGNT2013}.
\begin{propositionEng*}\label{pr_VtaOR10}
The operators $\dot{\mathrm{S}}_{0}(q)$, $\mathrm{S}_{0}(q)$, and $\mathrm{S}(q)$ have the following properties:
	\begin{itemize}
		\item[$1^{0}$.] The domain $\mathrm{Dom}(\dot{\mathrm{S}}_{0}(q))$ of the preminimal operator $\dot{\mathrm{S}}_{0}(q)$ is dense in the Hilbert space $L^{2}(\mathbb{R})$.
		\item[$2^{0}$.] The operator $\dot{\mathrm{S}}_{0}(q)$ is symmetric and therefore it is closable.
		\item[$3^{0}$.] Let $\mathrm{S}_{0}(q):=\left(\dot{\mathrm{S}}_{0}(q)\right)^{\sim}$. Then
		\begin{equation*}\label{eq_VtaOR30}
		\left(\dot{\mathrm{S}}_{0}(q)\right)^{\ast}=\mathrm{S}(q)\qquad\text{and}\qquad \dot{\mathrm{S}}_{0}(q)\subset \mathrm{S}_{0}(q)\subset \mathrm{S}(q).
		\end{equation*}
		\item[$4^{0}$.] The minimal operator $\mathrm{S}_{0}(q)$ is a densely defined, closed, and symmetric operator with the deficiency index $(d,d)$, where $0\leq d\leq 2$. The operators $\mathrm{S}_{0}(q)$ and $\mathrm{S}(q)$ are mutually adjoint, i. e. 
		\begin{equation*}\label{eq_VtaOR32}
		\mathrm{S}_{0}^{\ast}(q)=\mathrm{S}(q)\quad\text{and}\quad  \mathrm{S}^{\ast}(q)=\mathrm{S}_{0}(q).
		\end{equation*}
		\item[$5^{0}$.] The domain $\mathrm{Dom}(\mathrm{S}_{0}(q))$ of the minimal operator $\mathrm{S}_{0}(q)$ has the form:
		\begin{equation*}
		\mathrm{Dom}(\mathrm{S}_{0}(q))=\left\{u\in \mathrm{Dom}(\mathrm{S}(q))\left| [u,v]_{+\infty}-[u,v]_{-\infty}=0\quad \forall v\in \mathrm{Dom}(\mathrm{S}(q))\right.\right\},
		\end{equation*}
		where $[u,v]\equiv [u,v](x):=u(x)\overline{v^{[1]}(x)}-u^{[1]}(x)\overline{v(x)}$.
		\item[$6^{0}$.] The domains of the operators $\dot{\mathrm{S}}_{0}(q)$, $\mathrm{S}_{0}(q)$ and $\mathrm{S}(q)$ satisfy the embeddings:
		\begin{align*}
		\mathrm{Dom}(\dot{\mathrm{S}}_{0}(q)) & \subset H_{comp}^{1}(\mathbb{R}), \\
		\mathrm{Dom}(\mathrm{S}_{0}(q)) & \subset H_{loc}^{1}(\mathbb{R})\cap L^{2}(\mathbb{R}), \\
		\mathrm{Dom}(\mathrm{S}(q)) & \subset H_{loc}^{1}(\mathbb{R})\cap L^{2}(\mathbb{R}).
		\end{align*}
	\end{itemize}
\end{propositionEng*}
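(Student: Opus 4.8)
The plan is to base the entire proposition on the standard device of rewriting the scalar quasi-differential equation $l_q[u]=f$ as a first-order system for the vector $Y:=(u,u^{[1]})^{\mathsf{T}}$. Using $u'=u^{[1]}+qu$ together with $(u^{[1]})'=-q\,u^{[1]}-q^{2}u-l_q[u]$, the relation $l_q[u]=f$ is equivalent to
\begin{equation*}
Y'=\begin{pmatrix} q & 1 \\ -q^{2} & -q \end{pmatrix}Y+\begin{pmatrix} 0 \\ -f \end{pmatrix}.
\end{equation*}
Since $q\in L^{2}_{loc}(\mathbb{R})\subset L^{1}_{loc}(\mathbb{R})$, the coefficient matrix has entries in $L^{1}_{loc}$, so Carath\'{e}odory's theory applies: for every $f\in L^{1}_{loc}$ and every initial data the system has a unique global solution $Y\in\mathrm{AC}_{loc}$, depending continuously on the data. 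This single reduction supplies all the analytic input — existence and uniqueness of solutions, the absolute continuity of $u$ and $u^{[1]}$, and, via the Liouville formula, the behaviour of the Wronskian-type bracket $[u,v]$.

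First I would dispose of the two computational items. For $6^{0}$, if $u$ lies in any of the three domains then $u^{[1]}\in\mathrm{AC}_{loc}\subset L^{2}_{loc}$ and $qu\in L^{2}_{loc}$ (the product of an $L^{2}_{loc}$ function with a locally bounded continuous one), whence $u'=u^{[1]}+qu\in L^{2}_{loc}$; together with $u\in L^{2}$ (resp. compact support) this gives the asserted embeddings into $H^{1}_{loc}\cap L^{2}$ (resp. $H^{1}_{comp}$). For $2^{0}$, the Lagrange identity $\tfrac{d}{dx}[u,v]=l_q[u]\,\overline v-u\,\overline{l_q[v]}$ is verified directly from the system; integrating it over $\mathbb{R}$ for compactly supported $u,v\in\mathrm{Dom}(\dot{\mathrm{S}}_0(q))$ annihilates the boundary terms and yields $(\dot{\mathrm{S}}_0(q)u,v)=(u,\dot{\mathrm{S}}_0(q)v)$, so $\dot{\mathrm{S}}_0(q)$ is symmetric and hence closable.

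The heart of the matter is $3^{0}$, the identity $(\dot{\mathrm{S}}_0(q))^{*}=\mathrm{S}(q)$, which first needs the density $1^{0}$ so that the adjoint is defined. Density I would obtain from the solvability of the boundary value problem encoded in the system: with a fundamental matrix $\Phi$, the retarded solution satisfying $Y(a)=0$ is compactly supported exactly when $\int_a^b\Phi^{-1}(t)(0,-f(t))^{\mathsf{T}}\,dt=0$, two linear moment conditions on $f$; letting the interval vary produces enough compactly supported domain elements to rule out a nonzero orthogonal complement. The inclusion $\mathrm{S}(q)\subset(\dot{\mathrm{S}}_0(q))^{*}$ is then immediate from the Lagrange identity, whose boundary terms vanish against compactly supported $v$. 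The reverse inclusion $(\dot{\mathrm{S}}_0(q))^{*}\subset\mathrm{S}(q)$ is the genuine obstacle: an element $u$ of the adjoint domain satisfies $(u,l_q[v])=(g,v)$ for all $v\in\mathrm{Dom}(\dot{\mathrm{S}}_0(q))$, and one must upgrade this weak relation to the classical statement that $u,u^{[1]}\in\mathrm{AC}_{loc}$ with $l_q[u]=g$. This is a du Bois-Reymond regularity argument carried through the system: testing against the moment-zero $f$'s constructed above identifies $u$ with a genuine $\mathrm{AC}_{loc}$ solution of the inhomogeneous system, forcing the required absolute continuity. The relations $\dot{\mathrm{S}}_0(q)\subset\mathrm{S}_0(q)\subset\mathrm{S}(q)$ then follow since $\mathrm{S}(q)$ is closed and contains $\dot{\mathrm{S}}_0(q)$, hence its closure $\mathrm{S}_0(q)$.

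Finally, $4^{0}$ and $5^{0}$ are abstract consequences. Once $3^{0}$ holds, $\mathrm{S}_0(q)=\overline{\dot{\mathrm{S}}_0(q)}$ is densely defined, closed and symmetric, and passing to adjoints gives $\mathrm{S}_0(q)^{*}=(\dot{\mathrm{S}}_0(q))^{*}=\mathrm{S}(q)$ and $\mathrm{S}(q)^{*}=(\dot{\mathrm{S}}_0(q))^{**}=\mathrm{S}_0(q)$, i.e. mutual adjointness. The deficiency space $\ker(\mathrm{S}(q)\mp i)$ consists of the $L^{2}$-solutions of $l_q[u]=\pm i u$, of which there are at most two by the two-dimensionality of the solution space of the system; since $q$ is real the conjugation $u\mapsto\overline u$ satisfies $l_q[\overline u]=\overline{l_q[u]}$ and therefore maps one deficiency space onto the other, so the indices coincide and $0\le d\le 2$. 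For $5^{0}$, the Lagrange identity shows the limits $[u,v]_{\pm\infty}$ exist for $u,v\in\mathrm{Dom}(\mathrm{S}(q))$ because the integrand $l_q[u]\overline v-u\overline{l_q[v]}$ is in $L^{1}(\mathbb{R})$, and the characterization $\mathrm{Dom}(\mathrm{S}_0(q))=\mathrm{Dom}(\mathrm{S}(q)^{*})$ translates, via Green's formula, into the vanishing of the boundary form $[u,v]_{+\infty}-[u,v]_{-\infty}$ against all $v\in\mathrm{Dom}(\mathrm{S}(q))$. The main obstacle throughout is the regularity inclusion in $3^{0}$; everything else is either the system reduction or standard extension theory.
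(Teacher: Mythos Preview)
The paper does not actually prove this proposition: the Appendix merely records it as a list of known facts and sends the reader to \cite{MkMlMFAT2013n1, MkMlMFAT2013n2, ET2013, EGNT2013} for proofs. So there is no ``paper's own proof'' to compare against beyond those citations.

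Your proposal is correct and is precisely the approach taken in the cited literature (Savchuk--Shkalikov, Eckhardt--Teschl, Mikhailets--Molyboga). The reduction to the first-order system $Y'=A(x)Y+F$ with $A\in L^{1}_{loc}$ is the standard device in this setting; Carath\'{e}odory existence and uniqueness, the Lagrange identity, and the du Bois-Reymond argument for the hard inclusion $(\dot{\mathrm{S}}_{0}(q))^{*}\subset\mathrm{S}(q)$ are exactly what those papers do. The only place where your sketch is slightly compressed is the density argument $1^{0}$: in the references this is usually packaged by showing that for any $f\in L^{2}$ with compact support in $(a,b)$ satisfying the two linear ``moment'' conditions $\int_{a}^{b}\Phi^{-1}(t)\binom{0}{-f(t)}\,dt=0$, the retarded solution produces a $u\in\mathrm{Dom}(\dot{\mathrm{S}}_{0}(q))$ with $l_{q}[u]=f$, and then one checks that such $f$'s are dense. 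You have the right idea but the verification that the annihilator is trivial deserves one more sentence. Everything else---the equality of deficiency indices via the conjugation $u\mapsto\overline{u}$, the existence of $[u,v]_{\pm\infty}$ from $L^{1}$-integrability of the Lagrange integrand, and the boundary-form description of $\mathrm{Dom}(\mathrm{S}_{0}(q))$---is handled correctly.
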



\end{document}